\newcommand{\Z}{{\textsf{\textup{Z}}}}
\newtheorem{thm}{Theorem}
\newtheorem{cor}[thm]{Corollary}
\newtheorem{defi}[thm]{Definition}
\newtheorem{rem}[thm]{Remark}
\newtheorem{nota}[thm]{Notation}
\newtheorem{exa}[thm]{Example}
\newtheorem{princ}[thm]{Principle}
\newtheorem{ack}[thm]{Acknowledgement}
\newtheorem{temp}[thm]{Template}
\newtheorem{theorem}[thm]{Theorem}
\newcommand\be{\begin{equation}}
\newcommand\ee{\end{equation}} 
\def\bdefi{\begin{defi}}
\def\edefi{\end{defi}}
\def\bnota{\begin{nota}\rm}
\def\enota{\end{nota}}
\def\FIVE{\Pi_{1}^{1}\text{-\textup{\textsf{CA}}}_{0}}
\def\SIX{\Pi_{2}^{1}\text{-\textsf{\textup{CA}}}_{0}}
\def\ATR{\textup{\textsf{ATR}}}
\def\ZFC{\textup{\textsf{ZFC}}}
\def\ZF{\textup{\textsf{ZF}}}
\def\osc{\textup{\textsf{osc}}}
\def\RCA{\textup{\textsf{RCA}}}
\def\({\textup{(}}
\def\){\textup{)}}
\def\RCAo{\textup{\textsf{RCA}}_{0}^{\omega}}
\def\ACAo{\textup{\textsf{ACA}}_{0}^{\omega}}
\def\WKL{\textup{\textsf{WKL}}}
\def\bye{\end{document}}
\def\N{{\mathbb  N}}
\def\Q{{\mathbb  Q}}
\def\R{{\mathbb  R}}
\def\SS{\textup{\textsf{S}}}
\def\di{\rightarrow}
\def\asa{\leftrightarrow}
\def\ACA{\textup{\textsf{ACA}}}
\def\QFAC{\textup{\textsf{QF-AC}}}
\def\PHP{\textup{\textsf{PHP}}}
\def\osc{\textup{\textsf{osc}}}
\def\alt{\textup{\textsf{alt}}}
\def\enum{\textup{\textsf{enum}}}
\def\WSAC{\textup{\textsf{weak-$\Sigma_{1}^{1}$-AC$_{0}$}}}
\def\SAC{\textup{\textsf{$\Sigma_{1}^{1}$-AC$_{0}$}}}
\def\NIN{\textup{\textsf{NIN}}}
\def\BCT{\textup{\textsf{BCT}}}
\def\open{\textup{\textsf{open}}}
\def\IND{\textup{\textsf{IND}}}
\def\eps{\varepsilon}
\newcommand{\F}{{\bf F}}
\numberwithin{equation}{section}
\numberwithin{thm}{section}
\begin{document}
\title{On two recent extensions of the Big Five of Reverse Mathematics}
\author{Dag Normann}
\address{Department of Mathematics, The University of Oslo, Norway, P.O. Box 1053, Blindern N-0316 Oslo}
\email{dnormann@math.uio.no}
\author{Sam Sanders}
\address{Department of Philosophy II, RUB Bochum, Germany}
\email{sasander@me.com}
\keywords{Reverse Mathematics, higher-order arithmetic, Big Five.}
\subjclass[2010]{03B30, 03F35}
\begin{abstract}
The program \emph{Reverse Mathematics} in the foundations of mathematics seeks to identify the 
minimal axioms required to prove theorems of ordinary mathematics.  One always assumes the \emph{base theory}, a logical system embodying computable mathematics.  
As it turns out, many (most?) theorems are either provable in said base theory, or equivalent to one of four logical systems, collectively called the \emph{Big Five}. 
This paper provides an overview of two recent extensions of the Big Five, working in Kohlenbach's higher-order framework.  On one hand, we obtain a large number of equivalences between the second-order Big Five and 
third-order theorems of real analysis dealing with possibly discontinuous functions.
On the other hand, we identify four new `Big' systems, i.e.\ boasting many equivalences over the base theory, namely \emph{the uncountability of the reals}, the \emph{Jordan decomposition theorem}, the \emph{Baire category theorem}, and Tao's \emph{pigeon hole principle} for the Lebesgue measure.  We discuss a connection to hyperarithmetical analysis, completing the picture.   
\end{abstract}

%\setcounter{page}{0}
%\tableofcontents
%\thispagestyle{empty}
%\newpage

\maketitle
\thispagestyle{empty}

\section{Introduction and preliminares}\label{intro}
We provide an overview of the results in \cites{dagsamXIV, dagsamXI,dagsamX, samBIG, samBIG2, samBIG3,samBIG4} with a focus on two recent extensions of the \emph{Big Five} of \emph{Reverse Mathematics} (RM for short).
We will fist introduce the latter italicised notions and then sketch our contributions. 

\smallskip

First of all, the aim of RM is to find the minimal axioms needed to prove a given theorem of ordinary, i.e.\ non set-theoretic mathematics.
Generally, the minimal axioms are also {equivalent} to the theorem at hand, as observed by Friedman.  
\begin{quote}
When a theorem is proved from the right axioms, the axioms can be proved from the theorem. (\cite{fried})
\end{quote}
The RM-program was founded by Friedman (\cites{fried,fried2}) and developed extensively by Simpson and others (\cites{simpson2, simpson1}).  
The original textbook is \cite{simpson2} with a recent textbook \cite{damu}.  An introduction to RM for the proverbial mathematician-in-the-street is \cite{stillebron}.  
Now, the \emph{Big Five phenomenon} is a central topic in RM, as follows. 
\begin{quote}
[...] we would still claim that the great majority of the theorems from classical mathematics are equivalent to one of the big five. This phenomenon is still quite striking. Though we have some sense of why this phenomenon occurs, we really do not have a clear explanation for it, let alone a strictly logical or mathematical reason for it. The way I view it, gaining a greater understanding of this phenomenon is currently one of the driving questions behind reverse mathematics. (see \cite{montahue}*{p.\ 432})
\end{quote}
As discussed in minute detail in \cite{dsliceke, damu}, the study of combinatorics in RM has yielded many relatively natural principles that are classified \emph{outside} of the aforementioned Big Five classification.  
The collection of these exceptional principles has been dubbed the \emph{RM zoo} and comes with a convenient computer tool (\cite{damirzoo}).

\smallskip

Secondly, the previous takes place in the language of second-order arithmetic, which cannot directly express third-order notions like sets of reals or functions on the reals.  
For this reason, the focus of second-order RM has been on countable objects and higher-order objects that come with a countable representation or `code'.  
In our project, and this paper, we study the logical and computational properties of the uncountable, with a minimum of codes/representation 
For this reason, we work in Kohlenbach's higher-order RM (\cite{kohlenbach2}) where we stress that the base theory $\RCAo$ is a conservative extension of the usual base theory $\RCA_{0}$ of RM.  
The real numbers have the same definition in $\RCA_{0}$ and $\RCAo$ and $\R\di \R$-functions are $\N^{\N}\di \N^{\N}$-functions that respect equality `$=_{\R}$'.

\smallskip

Our first extension of the Big Five is based on the observation that while equivalences exist involving the Big Five and theorems from real analysis, 
the latter are mostly limited to continuous functions (see e.g.\ \cite{simpson2}*{IV.2}).  In a nutshell, the goal of \cites{dagsamXIV, samBIG3} is to obtain \emph{many} equivalences involving
the Big Five on one hand, and theorems concerning possibly \emph{discontinuous} functions on the other hand.  
We discuss these results in more detail in Section \ref{fluky}.  

\smallskip

Our second extension of the Big Five is based on an important observation made in \cite{dagsamXIV}: while many theorems of real analysis are equivalent to the Big Five, certain \emph{slight} variations or generalisations 
are not provable in the Big Five and much stronger systems.   However, the principles that defy a Big Five-classification can generally be classified as equivalent to one of the following principles, again working in higher-order RM.
\begin{itemize}
\item The uncountability of $\R$.
\item The enumeration principle: a countable set of reals can be enumerated.
\item The Baire category theorem.
\item  Tao's pigeon hole principle for the Lebesgue measure.
\end{itemize}
The exact meaning of `countable' is discussed in Section \ref{horsies}, but we can reveal that injections to $\N$ and especially Borel's \emph{height functions} (\cite{opborrelen3, opborrelen4, opborrelen5}) play a central goal.  
The goal of \cites{dagsamXI, samBIG, samBIG2, samBIG3} is to formulate many equivalences involving these four new `Big' systems. 
We discuss these results in more detail in Section \ref{fluky2}. 

\smallskip

Together, our new extensions of the Big Five shine a new light upon the aforementioned coding practise of second-order RM in which open sets and continuous functions are `on equal footing'. 
Indeed, a code for an open set can be effectively converted to a code for a continuous function, and vice versa, over the base theory $\RCA_{0}$ (\cite{simpson2}*{II.7.1}).  
Now, by Theorem \ref{nudge}, a weak system, namely $\RCAo+\WKL_{0}$, suffices to prove that third-order continuous functions on the unit interval have codes.  
By contrast, that a third-order open set has a code is not provable from the Big Five and much stronger systems (Theorem \ref{temp3}).  
In particular, continuous functions and open sets are \emph{not} on equal footing in higher-order arithmetic.  
Moreover, Figure \ref{xxx} shows the relation between some of the above principles; the assumption that open sets are given by RM-codes completely erases the RM of the aforementioned new Big systems. 

\begin{figure}[H]
\begin{tikzpicture}
  \matrix (m) [matrix of math nodes,row sep=3em,column sep=4em,minimum width=2em]
  { ~ & \begin{array}{c}\open \text{ = an open set of}\\ \text{reals has an RM-code.}\end{array}& ~\\
\begin{array}{c} \enum \text{= a countable} \\ \text{ set of reals can} \\ \text{be enumerated.}\end{array}&   \begin{array}{c} \BCT_{[0,1]} \text{= the Baire } \\ \text{category theorem} \\ \text{for the unit interval.}\end{array}        &\begin{array}{c} \PHP_{[0,1]} \text{= pigeon}\\\text{hole principle for} \\ \text{Lebesgue measure} \end{array}  \\
        ~          & \begin{array}{c}  \NIN_{[0,1]} \textup{= there is no }\\ \text{ injection from $[0,1]$ to $\N$}\end{array} &           ~ \\};
  \path[-stealth]
    (m-1-2) edge node [left] {} (m-2-1)
    (m-1-2) edge node [left] {} (m-2-2)
        (m-1-2) edge node [left] {} (m-2-3)
    (m-2-1) edge node [left] {} (m-3-2)
    (m-2-3) edge node [left] {} (m-3-2)
    (m-3-2) edge node [left] {} (m-3-2)
        (m-2-2) edge node [left] {} (m-3-2)
    (m-2-1) edge[bend left=10] node [above] {?} (m-2-2)
    (m-2-2) edge[bend right=-10] node [below] {?} (m-2-1)
    (m-2-2) edge[bend left=10] node [above] {?} (m-2-3)
    (m-2-3) edge[bend right=-10] node [below] {?} (m-2-2);
\end{tikzpicture}
\caption{Some relations among our principles}
\label{xxx}
\end{figure}
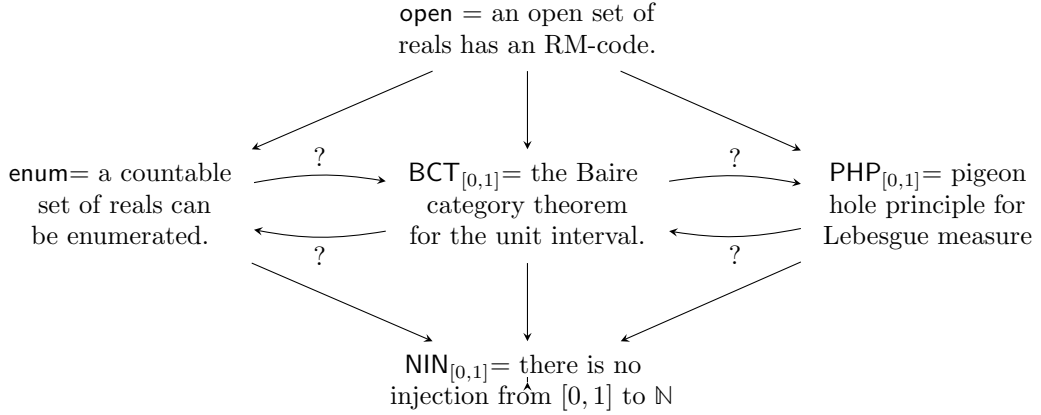
Next, regarding the base theory, we sometimes work over $\RCAo+\QFAC^{0,1}$, where the latter is the following fragment of the Axiom of (countable) Choice. 
\begin{princ}[$\QFAC^{0,1}$] Let $\varphi$ be any quantifier-free formula with arbitrary parameters and such that $(\forall n\in \N)(\exists f\in \N^{\N})\varphi(n ,f)$.  Then there is a sequence $(f_{n})_{n\in \N}$ in Baire space such that $(\forall n\in \N)\varphi(n, f_{n})$.
\end{princ}
\noindent
We provide the following two-fold motivation for the base theory $\RCAo+\QFAC^{0,1}$.
\begin{itemize}
\item The system $\ZF$ cannot prove the pointwise equivalence between sequential and `epsilon-delta' continuity, while $\RCAo+\QFAC^{0,1}$ can (see \cite{kohlenbach2}). 
\item Many results proved over $\RCAo+\QFAC^{0,1}$ do not go through over $\RCAo$ (\cites{dagsamV, dagsamVII}), like the equivalence between $\WKL_{0}$ and the Heine-Borel theorem for sequences of open sets without representations, as in Definition \ref{charz}.   
\end{itemize}
Now, $\QFAC^{0,1}$ also yields a conservative extension of $\SAC$, a system of hyperarithmetical analysis (\cite{hunterphd}*{\S2}).  
As it turns out, many third-order theorems of real analysis exist \emph{in the range of hyperarithmetical analysis}.  
The latter means that these theorems are between conservative extensions of systems of hyperarithmetical analysis, generally $\SAC$ and $\WSAC$ (see \cite{simpson2} for the latter).  
We sketch these results in Section \ref{HYPsec} but can already reveal that bijections to $\N$ play an important role.   In this way, a satisfying and complete picture comes to the fore.   

\smallskip

Finally, there are \emph{many} function classes that are part of real analysis.  
We have collected the relevant definitions in Section \ref{FCS}, most of which are studied in \cite{dagsamXIV} and elsewhere. 
We would like to stress that the usual hierarchy of function classes, provable in say $\ZFC$, can look \emph{very} different
in weak systems.  For instance, most of the function classes from Section \ref{FCS} do not contain totally discontinuous functions. 
Yet it is consistent with rather strong systems that there are totally discontinuous functions in those function classes.  
An explicit example is the function $h$ from \eqref{mopi} in case the unit interval is countable, i.e.\ there us an injection from the latter to $\N$.

\section{The Biggest Five of Reverse Mathematics}\label{fluky}
In this section, we introduce a recent extension of the second-order Big Five based on real analysis from \cites{dagsamXIV, samBIG,samBIG2, samBIG3}.  
In particular, we discuss equivalences between the \emph{second-order} Big Five and \emph{third-order} theorems of real analysis dealing with possibly discontinuous functions.  
The reader interested in foundational issues will note that the second- and third-order worlds are apparently intimately intertwined, much more than the first- and second-order ones are. 

\smallskip

We shall make use of the mainstream definition (of continuity, Riemann integration, \dots), i.e.\ no coding is used unless explicitly stated otherwise.   
The RM of $\WKL_{0}$ is developed in some detail (Section \ref{sketch}) while we merely sketch the results for the other Big Five (Section \ref{othersketch}).  

\subsection{Equivalences involving weak K\"onig's lemma}\label{sketch}
First of all, the second-order RM of real analysis has mostly focused on properties of \emph{continuous} functions as in Theorem \ref{temp0}.  
We assume familiarity with the associated coding (\cite{simpson2}*{II.6.1}).
\begin{thm}[$\RCA_{0}$,  \cite{simpson2}*{IV.2}]\label{temp0}
The following are equivalent to $\WKL_{0}$.  
\begin{itemize}
\item Any \(code for\) a continuous function $f:[0,1]\di \R$ is bounded above. 
\item Any \(code for\) a continuous function $f:[0,1]\di [0,1]$ has a supremum. 
\item Any \(code for\) a continuous function $f:[0,1]\di \R$ with a supremum, attains its maximum.
\end{itemize}
\end{thm}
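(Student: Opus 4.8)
The plan is to prove the two-sided implication in the usual way: first that $\WKL_{0}$ proves all three items, and then that each item, taken separately, implies $\WKL_{0}$ over $\RCA_{0}$. These are exactly the classical Simpson IV.2 arguments, so the work is in organising the standard ideas.

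For the forward direction the key lemma is that $\WKL_{0}$ proves every continuous $f:[0,1]\di\R$ (given by a code) has a modulus of uniform continuity. I would obtain this from the Heine--Borel form of $\WKL_{0}$: for fixed $\eps$ the code supplies, around each $x\in[0,1]$, a rational neighbourhood on which $f$ oscillates by less than $\eps$; these neighbourhoods cover $[0,1]$, and compactness extracts a finite subcover, yielding a single uniform $\delta$. Given such a modulus, item~(1) is immediate: partition $[0,1]$ into finitely many intervals of width below $\delta(1)$ and bound $|f|$ from the value at one point of each. For item~(2) I set $s_{k}$ to be the maximum of $f$ over a sufficiently fine rational grid; the uniform modulus makes $(s_{k})_{k}$ Cauchy with an explicit rate, so its limit exists already in $\RCA_{0}$ and is the least upper bound. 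For item~(3), assuming the supremum $s$ exists, I run a bisection, at each stage retaining a half-interval on which $f$ comes within $2^{-k}$ of $s$ (decidable on a grid via the modulus); the nested intervals converge to a point $x_{0}$, and continuity forces $f(x_{0})=s$.

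For the reversals I assume $\neg\WKL_{0}$ and manufacture a counterexample. By the standard $\RCA_{0}$ analysis of a failed instance of $\WKL$, there is an infinite binary tree $T\subseteq 2^{<\N}$ with no infinite path; equivalently I may work with a $\Sigma^{0}_{1}$-separation problem, since that too is equivalent to $\WKL_{0}$ over $\RCA_{0}$. Identifying Cantor space with a closed subset of $[0,1]$, I use that ``the branch of $x$ exits $T$'' is a locally constant (open) condition, so the exit level $n(x)$ defines a locally constant, hence continuous, function with a code that is built $\Delta^{0}_{1}$ from $T$. For item~(1)$\Rightarrow\WKL_{0}$: since $T$ is infinite yet pathless, $n(x)$ is unbounded, and after interpolating across the gaps of the Cantor set I obtain an unbounded coded continuous $f$, contradicting boundedness. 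For items~(2) and~(3) I instead shape $f$ into $[0,1]$ so that the exact real value of its supremum, respectively the location of a maximiser, encodes a solution to the underlying separation problem: knowing the sup as a genuine real, or a point attaining it, lets me read off the required separating set (equivalently an infinite path), again contradicting $\neg\WKL_{0}$.

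The main obstacle is the reversal bookkeeping rather than the forward direction. I must verify inside $\RCA_{0}$ that the tree-coded function is a \emph{genuine} coded continuous function (the code is a legitimate $\RCA_{0}$ object, arithmetically definable from $T$) and, more delicately for items~(2) and~(3), that the analytic hypothesis returns exactly enough information to recover the separating set and no more. The forward direction's only subtlety is that uniform continuity is unavailable in $\RCA_{0}$ alone, so each step must be routed through the compactness/Heine--Borel use of $\WKL_{0}$ rather than through a naive least-upper-bound that $\RCA_{0}$ cannot supply.
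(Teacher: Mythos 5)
Two preliminary remarks. First, the paper does not actually prove Theorem \ref{temp0}: it is quoted as a known classical result, with the proof delegated to Simpson IV.2. So your proposal can only be compared against that standard second-order proof. Second, your plan \emph{is} that standard proof: the forward direction via Heine--Borel/$\WKL_{0}$ giving a modulus of uniform continuity for coded continuous functions, and the reversals via an infinite pathless tree (equivalently, a $\Sigma^{0}_{1}$-separation instance), the exit-level function $n(x)$, and linear interpolation across the gaps of the Cantor set. The forward direction is fine as sketched, and so are the reversals of items (1) and (3): for item (3) the function $f_{T}(x)=1-2^{-n(x)}$ works exactly as you intend, since $\RCA_{0}$ proves its supremum equals $1$ whenever $T$ is infinite (nodes exist at every level), while any point attaining the value $1$ must have a branch that never leaves $T$, i.e.\ is a path.

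The one step that would fail if implemented literally is your reversal of item (2). Note first that the tree function $f_{T}$ above is useless here: its supremum \emph{provably exists} (it is $1$), whether or not $T$ has a path, so no contradiction with item (2) arises. Your fallback --- ``shape $f$ so that the exact real value of its supremum encodes a separating set'' --- hits two concrete obstructions. (i) Extracting discrete information from a real is not $\Delta^{0}_{1}$: deciding $h_{n}>\frac12$ versus $h_{n}<\frac12$ for reals $h_{n}$ is only $\Sigma^{0}_{1}$ on each side, so one needs a \emph{gap} encoding (as in Simpson's III.2.2, digits in $\{1,2\}$ base $4$). (ii) A gap encoding of tent heights (say height $\frac12\pm 4^{-n}$ according to membership) cannot be enumerated as a \emph{total} code: totality forces the peak value to be pinned down with precision tending to $0$, so while membership is still undetermined the height deviation must decay with the stage of the witness, which destroys the gap. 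The classical repairs are: either read ``has a supremum'' as the paper does in its footnote to Theorem \ref{temp1}, namely as a supremum \emph{operator} on rational subintervals, and then interval-halving produces a maximiser, reducing item (2) to the item (3) argument; or keep the single-supremum reading and compose a gap-encoded increasing rational sequence $(q_{s})_{s\in\N}$ with the exit-level function, $f(x):=q_{n(x)}$. Under the reductio hypothesis (a pathless infinite $T$) this is a total, coded, continuous $[0,1]\di[0,1]$ function --- the ``crowding'' of values happens only toward paths, of which there are none --- and its supremum is $\lim_{s}q_{s}$, so item (2) yields arithmetical comprehension and hence $\WKL_{0}$, contradicting the reductio hypothesis. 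Your outline needs one of these two moves spelled out; as written, the extraction step for item (2) is precisely the point where the naive construction breaks.
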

Now, the first step is to remove the coding from Theorem \ref{temp0}.  
As it turns out, only relatively weak axioms are needed to show that total second-order codes denote third-order functions that are continuous, and vice versa. 
\begin{thm}[\cite{dagsamXIV}]\label{nudge}~
\begin{itemize}
\item \($\RCAo$\) For a second-order code $\Phi$ of a continuous function on $\R$, there is a third-order $f:\R\di \R$ such that $f(x)$ equals the value of $\Phi$ at any $x\in \R$.
\item \($\RCAo+\WKL_{0}$\) For a third-order $f:[0,1]\di \R$ that is continuous, there is a second-order code $\Phi$ such that $f(x)$ equals the value of $\Phi$ at any $x\in [0,1]$.
\end{itemize}
\end{thm}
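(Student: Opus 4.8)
The plan is to prove the two directions of Theorem~\ref{nudge} separately, since they encode genuinely different phenomena. The first bullet is the ``easy'' direction: given a second-order code $\Phi$ for a continuous function, I want to manufacture an honest third-order functional $f\colon\R\to\R$ that agrees with the code's value everywhere. The key observation is that a second-order code is, by the standard definition (\cite{simpson2}*{II.6.1}), a set of quadruples $(a,r,b,s)$ of rationals, and its \emph{value} at a real $x$ is characterised by a $\Pi^0_1$-type condition on the approximating neighbourhoods. Since we are now working in $\RCAo$, we have access to quantification over arbitrary type-$1$ objects and to the functional $\exists^2$ is \emph{not} assumed --- but crucially we do not need it here. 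I would define $f(x)$ pointwise by using the code to read off a Cauchy sequence of rational approximations to the output, with a fixed modulus coming from the code's built-in convergence data. The routine check is that this assignment (i) is well-defined on reals (respects $=_\R$), which follows because a code is required to assign the same value to representations of the same real, and (ii) is expressible as a single type-$2$ object via $\lambda$-abstraction over the reals, which is available in $\RCAo$ since the latter includes a combinator/$\lambda$-calculus for all finite types. No serious obstacle arises in this direction; it is essentially a bookkeeping exercise in unwinding the coding conventions.

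\textbf{The converse direction}, requiring $\RCAo+\WKL_0$, is where the mathematical content lies. Here I start from a genuine continuous $f\colon[0,1]\to\R$ (third-order, with no modulus given a priori) and must produce a second-order code $\Phi$. The heart of the matter is that on a compact domain, continuity plus $\WKL_0$ yields \emph{uniform} continuity together with an explicit modulus, and it is precisely a modulus of uniform continuity that lets one extract a countable code. Concretely, my plan is: first, use $\WKL_0$ (in the form of the Heine--Borel property, or equivalently the fan functional's behaviour on $2^\N$ transported to $[0,1]$ via binary expansions) to show that $f$ is uniformly continuous and admits a sequence $(N_k)_{k\in\N}$ such that $|x-y|<2^{-N_k}$ implies $|f(x)-f(y)|<2^{-k}$. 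Second, enumerate the dyadic rationals $q\in[0,1]$ and record, for each such $q$ and each precision $k$, a rational approximation to $f(q)$ to within $2^{-k}$; the modulus $N_k$ guarantees that the resulting set of neighbourhood-data $(a,r,b,s)$ satisfies the coherence axioms of a code and that its value at every $x$ equals $f(x)$.

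\textbf{The hard part will be} the passage from pointwise continuity to a usable modulus within the weak system $\RCAo+\WKL_0$. In the second-order world this is the standard but nontrivial theorem that $\WKL_0$ proves every continuous function on $[0,1]$ uniformly continuous; in the third-order setting the function $f$ is a bona fide type-$2$ object rather than a code, so I cannot simply quote the second-order result. Instead I expect to argue directly: pointwise continuity of $f$ gives, for each $x$ and each $k$, some local radius of continuity, and I would assemble these into an open cover of $[0,1]$; applying the Heine--Borel theorem (available from $\WKL_0$) yields a finite subcover, from which a single modulus $N_k$ is extracted by taking a minimum. The delicate issue is that selecting the local radii uniformly may require a choice step, which is exactly why the statement is most cleanly proved over $\RCAo+\QFAC^{0,1}$ as discussed above; over plain $\RCAo+\WKL_0$ one must instead use the Heine--Borel formulation for the \emph{canonical} cover by continuity-neighbourhoods so as to avoid an appeal to choice. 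Once the modulus is in hand, building the code $\Phi$ and verifying that $\Phi$ evaluates to $f(x)$ at each point is a direct unwinding of definitions, and well-definedness with respect to $=_\R$ follows from uniform continuity.
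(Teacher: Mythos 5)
Your proof of the second item contains a genuine gap, and it sits exactly where you located the difficulty. Your plan is: pointwise continuity gives local radii, these assemble into a cover of $[0,1]$, Heine--Borel (from $\WKL_{0}$) gives a finite subcover, and a modulus falls out. The problem is that this cover cannot be formed in $\RCAo+\WKL_{0}$. To present it as a \emph{countable} cover (the only kind to which the Heine--Borel theorem of $\WKL_{0}$ applies), you must collect the rational pairs $(q,r)$ satisfying $(\forall y\in[0,1])(|q-y|<r\rightarrow |f(q)-f(y)|\leq 2^{-k})$; this formula contains a universal quantifier over the reals with the third-order $f$ as parameter, and $\RCAo+\WKL_{0}$ has no comprehension for such formulas. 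Nor does $\QFAC^{1,0}$ help you select local radii, since its matrix must be quantifier-free, which the displayed condition is not. The alternative reading, namely treating the continuity-neighbourhoods as an \emph{uncountable} cover $\cup_{x\in[0,1]}B(x,\delta_{x})$, would require Cousin's lemma for an arbitrary third-order function, which is not provable even in $\Z_{2}^{\omega}$, let alone from $\WKL_{0}$ (compare the discussion around Theorem \ref{temp2}). So ``use the canonical cover by continuity-neighbourhoods to avoid choice'' does not rescue the argument.

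The paper's proof works in the opposite order, and this reversal is the missing idea: following Kohlenbach's argument (based on a construction of Normann), one \emph{first} uses $\WKL_{0}$ to show that the innermost universal formula --- the quantifier over $[0,1]$ or $2^{\N}$ occurring in the epsilon-delta definition of continuity --- is equivalent to a \emph{quantifier-free} formula, and only after this reduction does one extract a modulus of (uniform) continuity, from which building the code is routine, as you say. A smaller but related misstep occurs in your first item: $\lambda$-abstraction alone cannot produce the third-order $f$ from the code $\Phi$, because evaluating $\Phi$ at a real $x$ requires an unbounded search for witnessing neighbourhood-data, and converting that pointwise existence statement into a single type-two functional is precisely an application of $\QFAC^{1,0}$ --- an axiom of $\RCAo$, and the entire content of the paper's proof of that item.
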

\begin{proof}
For the first item, the base theory $\RCAo$ includes $\QFAC^{1,0}$ defined as follows:
\be\label{lebber}
(\forall f\in \N^{\N})(\exists n\in \N)\varphi(f, n)\di (\exists G:\N^{\N}\di \N)(\forall f\in \N^{\N})\varphi(f, G(f))
\ee
for quantifier-free $\varphi$.
Up to coding, the statement `for any $x\in \R$, $\Phi$ is total at $x$' has the same form as the antecedent of $\QFAC^{1, 0}$.
Applying the latter, the resulting choice function yields the required third-order function.

\smallskip

For the second item, Kohlenbach proves the associated result for continuous functions on Cantor space (\cite{kohlenbach4}*{\S4.4}), based on a construction due to Dag Normann.  
This construction is adapted in \cite{dagsamXIV} to the unit interval.  Simply put, the aformentioned construction starts from the usual `epsilon-delta' definition 
of continuity and uses $\WKL_{0}$ to show that the innermost universal formula -involving a quantifier over $[0,1]$ or $2^{\N}$- is equivalent to a \emph{quantifier-free} formula.  
One then obtains a modulus of (uniform) continuity, which readily yields a second-order code. 
\end{proof}
%We now have the following immediate corollary.
\begin{cor}[$\RCAo$]\label{temp00}
The following are equivalent to $\WKL_{0}$.  
\begin{itemize}
\item Any third-order function $f:[0,1]\di \R$ that is continuous, is bounded above. 
\item Any third-order function $f:[0,1]\di \R$ that is continuous, has a supremum.
\item Any third-order function $f:[0,1]\di \R$ that is continuous and has a supremum, attains its maximum. 
\end{itemize}
\end{cor}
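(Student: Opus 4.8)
The plan is to obtain this corollary by combining Theorem \ref{temp0}, which establishes the three equivalences for \emph{coded} continuous functions over $\RCA_{0}$, with both directions of Theorem \ref{nudge}, which translate between second-order codes and third-order continuous functions while preserving pointwise values. In essence, each of the three third-order statements is the coding-free avatar of the corresponding item in Theorem \ref{temp0}, and the two items of Theorem \ref{nudge} let one pass between the coded and coding-free formulations without altering the function's values on $[0,1]$. The three bullets are handled in parallel throughout.

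For the forward direction, I would work in $\RCAo+\WKL_{0}$ and fix a continuous third-order $f:[0,1]\di \R$. The \emph{second} item of Theorem \ref{nudge}, which is exactly where $\WKL_{0}$ is consumed, furnishes a second-order code $\Phi$ such that $f(x)$ equals the value of $\Phi$ at every $x\in [0,1]$. Since $\WKL_{0}$ holds, Theorem \ref{temp0} applies to $\Phi$ and yields that $\Phi$ is bounded above, or has a supremum, or attains its maximum once a supremum exists. Because $f$ and $\Phi$ agree pointwise on $[0,1]$, the relevant witness, be it an upper bound, a supremum value, or a maximising point, is literally the same object for $f$, so each third-order item follows.

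For the reverse direction, I would assume a single third-order item and aim to recover $\WKL_{0}$; by Theorem \ref{temp0} it suffices to prove the matching coded statement. So fix a code $\Phi$ for a continuous function on $[0,1]$. The key point is that the $\QFAC^{1,0}$-based construction behind the \emph{first} item of Theorem \ref{nudge} is provable in $\RCAo$ alone and adapts verbatim from $\R$ to $[0,1]$: it produces a third-order $f:[0,1]\di \R$ with $f(x)$ equal to the value of $\Phi$ at each $x$, and this $f$ is continuous in the epsilon-delta sense precisely because $\Phi$ codes a continuous function. Applying the assumed third-order item to $f$ and transferring the resulting bound, supremum, or maximiser back to $\Phi$ via the pointwise agreement, I obtain the coded statement, whence $\WKL_{0}$ by Theorem \ref{temp0}.

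The main obstacle is confined to the reverse direction and is a matter of bookkeeping rather than depth: one must verify that the third-order function extracted from a code is genuinely continuous as a third-order object, and, crucially, that the \emph{first} item of Theorem \ref{nudge} is the one invoked, since it needs only $\RCAo$, whereas the $\WKL_{0}$-dependent second item would be circular when $\WKL_{0}$ is the target. The transfer of ``bounded above'', ``has a supremum'', and ``attains its maximum'' between $f$ and $\Phi$ is then immediate once pointwise equality is in hand, as each is a property of the shared set of values.
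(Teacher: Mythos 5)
Your proposal is correct and matches the paper's own proof, which is exactly the compressed version of your argument: the paper says ``use the theorem [\ref{nudge}] to obtain codes and apply Theorem \ref{temp0}'', i.e.\ the second item of Theorem \ref{nudge} (consuming $\WKL_{0}$) in the forward direction and the first item (provable in $\RCAo$ alone) in the reversal, with pointwise agreement transferring the bound, supremum, or maximiser. Your explicit care about which item of Theorem \ref{nudge} is used where, so as to avoid circularity, is precisely the bookkeeping the paper leaves implicit.
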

\begin{proof}
Use the theorem to obtain codes and apply Theorem \ref{temp0}.
\end{proof}
%It is at first somewhat puzzling to witness equivalences between second- and third-order principles.  
The next crucial step is the realisation that properties like boundedness also hold for large classes of \emph{discontinuous} functions.  
In particular, it is now a natural question for which function classes $\Gamma, \Xi, \Omega, \dots$ of real analysis Theorem \ref{temp1} holds.
\begin{thm}[$\RCAo$ or $\RCAo+\QFAC^{0,1}$]\label{temp1}
The following are equivalent to $\WKL_{0}$.  
\begin{itemize}
\item Any $f:[0,1]\di \R$ in $\Gamma$ is bounded above. 
\item Any $f:[0,1]\di [0,1]$ in $\Xi$ has a supremum\footnote{To be absolutely clear, we assume the existence of a `supremum operator' $\Phi:\Q^{2}\di \R$ such that $\Phi(p, q)=\sup_{x\in [p, q]}f(x)$ for all $p, q\in [0,1]\cap \Q$.\label{footie}}. 
\item Any $f:[0,1]\di \R$ in $\Omega$ such that $\sup_{x\in [0,1]}f(x)$ exists, attains a maximum.
\end{itemize}
\end{thm}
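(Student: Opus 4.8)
The statement is an equivalence, so for each of the three items I would prove both that $\WKL_{0}$ implies it and that it implies $\WKL_{0}$, exploiting the two structural features the classes $\Gamma,\Xi,\Omega$ share: each contains all continuous functions, and membership entails a local boundedness that is witnessed by rational data (for instance the one-sided limits of a regulated function). The reversals are then immediate from Corollary \ref{temp00}. Indeed, restricting the first and third items to continuous $f$ reproduces verbatim the corresponding items of Corollary \ref{temp00}, which are already equivalent to $\WKL_{0}$ over $\RCAo$; for the second item, its conclusion is the existence of the supremum operator $\Phi$ of footnote \ref{footie}, and evaluating $\Phi$ at $(0,1)$ yields the single real $\sup_{x\in[0,1]}f(x)$, so the continuous instances again collapse to Corollary \ref{temp00}. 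Thus all the work lies in the forward directions.

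For the first item I would argue over $\RCAo+\WKL_{0}$ as follows. The local boundedness of $f\in\Gamma$ supplies, for each $x\in[0,1]$, a rational open interval $J\ni x$ and an $N\in\N$ with $|f(y)|\le N$ for all $y\in J$; ranging over $x$ gives an open cover of the unit interval. The essential manoeuvre is to replace this uncountable cover by a genuine \emph{sequence} of rational intervals to which Heine--Borel applies (cf.\ Definition \ref{charz}): I would enumerate all rational intervals and, using $\QFAC^{0,1}$ together with the rational witnesses furnished by the class structure, extract a subsequence that still covers $[0,1]$ and carries explicit rational bounds. The Heine--Borel theorem, available under $\WKL_{0}$, then yields a finite subcover, and $f$ is bounded by the maximum of the finitely many associated $N$'s.

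The remaining two items reduce to the first together with a bisection driven by $\WKL_{0}$. For the second, boundedness and a nested-interval argument produce, for each fixed rational pair $(p,q)$, the real $\sup_{x\in[p,q]}f(x)$; uniformising this construction over all $(p,q)$ by $\QFAC^{0,1}$ assembles the operator $\Phi$ demanded by footnote \ref{footie}. For the third, assume $\sup_{x\in[0,1]}f(x)=S$ exists and bisect $[0,1]$ repeatedly, each time passing to a half on which the supremum is still $S$; the (otherwise undecidable) choice of half is exactly what $\WKL_{0}$ supplies, the nested halves converge to a point $x^{*}$, and the defining closure property of $\Omega$ forces $f(x^{*})=S$.

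The main obstacle is the covering-and-selection step in the forward direction for boundedness: passing from the pointwise, uncountable cover by rational intervals to a countable subcover usable by $\WKL_{0}$, \emph{without} inadvertently invoking the far stronger uncountable Heine--Borel principle $\HBU$. It is precisely here that the specific structure of $\Gamma,\Xi,\Omega$ and the availability of $\QFAC^{0,1}$ are indispensable, and it is this step that separates the genuine $\WKL_{0}$-equivalences from the instances (such as functions of bounded variation, which are bounded by their monotone parts) where boundedness is already provable in the base theory.
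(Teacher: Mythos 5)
Your reversals are fine and coincide with the paper's: continuous functions belong to every class listed for $\Gamma,\Xi,\Omega$, so Corollary \ref{temp00} applies directly. The gap is in the forward directions, exactly at the step you yourself flag as the main obstacle, and it cannot be closed by the means you invoke. First, your structural premise fails for several of the listed classes: as the paper stresses in Section \ref{expla}, regulated, usco, and cliquish functions are \emph{not} determined by their behaviour on a countable dense set --- the one-sided limits $f(x+), f(x-)$ at an irrational $x$ are not ``rational data'' --- so for $\Gamma\ni$ regulated and $\Omega\ni$ usco there is no rational witness certifying local bounds. Second, even granting local boundedness at every point, selecting the rational intervals ``that work'' means deciding statements of the form $(\forall y\in (p,q))(|f(y)|\leq N)$, which quantify over $\R$; this is a \emph{comprehension} problem, and neither $\WKL_{0}$ (a second-order axiom) nor $\QFAC^{0,1}$ (countable choice for quantifier-free formulas, with the number quantifier outermost) can produce the required sequence of intervals. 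Working with the uncountable cover instead is Cousin's lemma for covers derived from $f$, and the paper shows this is hopeless at the $\WKL_{0}$ level for precisely these classes: Cousin's lemma restricted to $BV$ functions already implies $\NIN_{[0,1]}$ (see the $\ATR_{0}$ subsection), hence is unprovable in $\Z_{2}^{\omega}+\QFAC^{0,1}$, and $BV$ functions are regulated. The same defect undercuts your bisection arguments for the second and third items: ``pass to the half on which the supremum is still $S$'' is a third-order decision that $\WKL_{0}$ does not make; indeed, by Theorem \ref{temp3}, sup-existence for regulated (or $BV$, or usco) functions is unprovable even in $\Z_{2}^{\omega}+\QFAC^{0,1}$, so no correct argument at the $\WKL_{0}$ level may implicitly compute suprema over subintervals for such $f$.

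The paper's proof goes by a different device, the LEM trick of Remark \ref{LEM}: split on $(\exists^{2})\vee\neg(\exists^{2})$. Under $\neg(\exists^{2})$, every $\R\di\R$-function is continuous, Theorem \ref{nudge} (which uses $\WKL_{0}$) supplies a second-order code, and Theorem \ref{temp0} finishes. Under $(\exists^{2})$, one has all of $\ACAo$ (plus $\QFAC^{0,1}$ where stated) --- far more than $\WKL_{0}$ --- and class-specific arguments close the case: boundedness of regulated $f$ via $\QFAC^{0,1}$ (choosing $x_{n}$ with $f(x_{n})>n$ from assumed unboundedness) plus Bolzano--Weierstrass and the existence of $f(x_{0}+)$ and $f(x_{0}-)$; the supremum operator for the $\Xi$-classes via the `second-order-ish' reduction of Section \ref{expla}, which makes $(\exists x\in[p,q])(f(x)>y)$ arithmetical in the values of $f$ on $\Q$ (cadlag, quasi-continuous) or in the approximating sequence (Baire 1), so that arithmetical comprehension yields $\Phi$; attainment for usco $f$ with $\sup f=S$ via the nested non-empty closed sets $\{x\in[0,1]:f(x)\geq S-\frac{1}{2^{n}}\}$ together with $\QFAC^{0,1}$ and Bolzano--Weierstrass. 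This case split, not a covering argument, is what makes the forward directions provable from $\WKL_{0}$.
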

%By \cite{dagsamXIV}*{Cor.\ 2.7}, continuous functions on the unit intervals have codes, i.e.\ we can take $\Gamma, \Xi,$ and $ \Omega$ to be the third-order class of continuous functions.  
As explored in \cites{dagsamXIV, samBIG, samBIG2, samBIG3}, Theorem \ref{temp1} holds for the following function classes , the definitions of which can be found in the mainstream literature or Section \ref{FCS}.
\begin{itemize}
\item $\Gamma$ consists of functions that are either: regulated, regulated and $U_{0}$, regulated and continuous ae, bounded Waterman variation, cadlag, usco, $C\cup BV$, regulated and effectively Baire $n$, Baire 1 and Darboux, Baire 1 and regulated, or regulated and quasi-continuous.
\item $\Xi$ is either: quasi-continuous, Baire 1, quasi-continuous and regulated, cadlag, Baire 1 and Darboux, or Baire 1 and usco. 
\item $\Omega$ is either: usco, usco and regulated, usco and Baire 1, cadlag and usco, or usco and quasi-continuous. 
\end{itemize}
We point out that there are \emph{many}\footnote{If $\mathfrak{c}$ is the cardinality of $\R$, there are $2^{\mathfrak{c}}$ non-measurable quasi-continuous $[0,1]\di \R$-functions and $2^{\mathfrak{c}}$ measurable  quasi-continuous $[0,1]\di [0,1]$-functions (see \cite{holaseg})} quasi-continuous functions and that this notion already goes back to Baire and Volterra (\cite{beren2, kemphaan}).  Moreover, there are many function classes in real analysis, many of which should yield generalisations of Theorem \ref{temp1}.  

\smallskip

Another interesting example is \emph{Cousin's lemma} (\cite{cousin1}), defined as follows.
\begin{center}
\emph{For $\Psi:[0,1]\di \R^{+}$, the covering $\cup_{x\in [0,1]}B(x, \Psi(x))$ of $[0,1]$ has a finite sub-covering, i.e.\ there are $x_{0}, \dots, x_{k}\in [0,1]$ where $\cup_{i\leq k}B(x_{i}, \Psi(x_{i}))$ covers $[0,1]$.}
\end{center}
Now, $\WKL_{0}$ is equivalent to Cousin's lemma for (codes of) continuous functions (\cite{basket, basket2}). 
The latter result readily generalises as follows.
\begin{thm}[$\RCAo$]\label{temp2}
The principle $\WKL_{0}$ is equivalent to Cousin's lemma restricted to either: lsco, lsco and Baire 1, lsco and effectively Baire $n+2$, lsco and continuous ae, lsco and pointwise discontinuous, lsco and not everywhere discontinuous, quasi-continuous, cadlag, regulated and  $U_{0}$, or Baire 1.  
\end{thm}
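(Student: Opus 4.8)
The plan is to prove the two directions separately, handling the reverse implication uniformly for all the listed classes and the forward implication class-by-class. Throughout, $\Psi:[0,1]\di\R^{+}$ denotes the gauge and $B(x,r)$ the open ball of radius $r$ about $x$.

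For the reverse direction (Cousin's lemma for a class $\Gamma$ implies $\WKL_{0}$), I would first observe that every positive continuous function on $[0,1]$ lies in each of the listed classes: this is a routine check from the definitions in Section~\ref{FCS}, since such a function is lower semicontinuous, quasi-continuous, cadlag, regulated with the $U_{0}$-property, Baire $1$, and has dense continuity points. Hence Cousin's lemma for $\Gamma$ in particular yields Cousin's lemma for third-order continuous gauges. To connect this with the known equivalence for \emph{codes} (\cite{basket, basket2}), I would take a code $\Phi$ for a positive continuous function and use the first item of Theorem~\ref{nudge}, provable in $\RCAo$, to obtain a third-order continuous $\Psi$ with $\Psi(x)$ equal to the value of $\Phi$ at each $x$. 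Applying Cousin's lemma for $\Gamma$ to $\Psi$ produces $x_{0},\dots,x_{k}$ with $\cup_{i\leq k}B(x_{i},\Psi(x_{i}))$ covering $[0,1]$; as $\Psi(x_{i})$ is the value of $\Phi$ at $x_{i}$, this is a finite sub-covering for $\Phi$ as well. Thus Cousin's lemma for codes of continuous functions holds, and the known equivalence delivers $\WKL_{0}$.

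For the forward direction ($\WKL_{0}$ implies Cousin's lemma for $\Gamma$), the central idea is to replace the uncountable gauge-covering by a \emph{countable} covering by rational balls, to which the Heine--Borel theorem — equivalent to $\WKL_{0}$ over $\RCA_{0}$ — applies. Concretely, I would consider the balls $B(r,\eps)$ with $r,\eps\in\Q\cap[0,1]$ and $\eps<\Psi(r)$. Evaluating the third-order $\Psi$ at the rational $r$ and comparing with $\eps$ is a $\Sigma^{0}_{1}$ condition, so by dovetailing this sequence of rational balls exists already in $\RCAo$, and each such ball satisfies $B(r,\eps)\subseteq B(r,\Psi(r))$. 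The key point is that this countable family still covers $[0,1]$: given $x$ with $\Psi(x)>0$, the defining regularity of the class provides a rational $r$ close to $x$ with $\Psi(r)$ bounded below, so that $|x-r|<\Psi(r)$ and $x\in B(r,\eps)$ for suitable rational $\eps$. For lower semicontinuous $\Psi$ (hence for all the `lsco and $\dots$' subclasses) this is immediate from $\liminf_{y\to x}\Psi(y)\geq\Psi(x)>0$; for quasi-continuous $\Psi$ one uses the open set near $x$ on which $\Psi$ stays close to $\Psi(x)$; for cadlag $\Psi$ one approaches $x$ from the right; and for regulated $\Psi$ with the $U_{0}$-property one approaches from whichever side carries the larger one-sided limit, the $U_{0}$-condition guaranteeing this limit does not drop below $\Psi(x)$. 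Applying $\WKL_{0}$ to the countable cover yields a finite subfamily of the $B(r,\eps)$ covering $[0,1]$, and replacing each by the enclosing $B(r,\Psi(r))$ gives the desired finite sub-covering.

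The \emph{main obstacle} is the Baire $1$ case, where the rational-ball reduction breaks down: a Baire $1$ gauge may have an `upward spike', with $\Psi(x_{0})$ large at an irrational $x_{0}$ while $\Psi$ is tiny on every nearby rational, so that no rational-centred ball covers $x_{0}$. This is exactly the phenomenon making Cousin's lemma for arbitrary gauges much stronger than $\WKL_{0}$, so the Baire $1$ hypothesis must be exploited directly. Here I would use that a Baire $1$ gauge is a pointwise limit of continuous functions and that its continuity points are dense, building the enumerable subcover from rational certificates attached to the continuous approximants and handling the (meagre) spike set separately via the compactness supplied by $\WKL_{0}$. Arranging this enumeration to be available in $\RCAo$, so that $\WKL_{0}$ alone — rather than arithmetical comprehension — suffices, is the delicate step, and is where the precise effective form of `Baire $1$' from Section~\ref{FCS} does the work.
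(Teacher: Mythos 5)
Your reversal is correct and is essentially the paper's: positive continuous gauges belong to every class listed, the first item of Theorem \ref{nudge} turns a code into a third-order continuous gauge, and the equivalence of $\WKL_{0}$ with Cousin's lemma for codes of continuous functions (\cite{basket, basket2}) then yields $\WKL_{0}$. Your forward direction via the countable family of balls $B(r,\eps)$ with $r,\eps\in \Q$ and $\eps<\Psi(r)$ is also correct for nine of the ten classes: it works for all the lsco-based classes, for quasi-continuity, for cadlag, and for regulated plus $U_{0}$ (modulo the trivial observation that every rational point, in particular the endpoint $1$ in the cadlag case, is covered by a ball centred at itself). This is a genuinely more direct route than the paper's, which obtains such instances from Theorem \ref{temp1} -- for instance, for lsco $\Psi$ the function $1/\Psi$ is usco, hence bounded above using $\WKL_{0}$, giving a uniform positive lower bound on the gauge -- and defers the remaining ones to \cite{dagsamXIV}.

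The genuine gap is the Baire 1 case, which you correctly isolate but then only gesture at, and your stated plan aims at the wrong target. You want to arrange matters ``so that $\WKL_{0}$ alone -- rather than arithmetical comprehension -- suffices''. The missing idea is the LEM trick of Remark \ref{LEM}: split the proof on $(\exists^{2})\vee\neg(\exists^{2})$. Under $\neg(\exists^{2})$, every $\R\di\R$-function is continuous, so the Baire 1 gauge is continuous, Theorem \ref{nudge} supplies a code, and the $\WKL_{0}$-result for continuous codes finishes. Under $(\exists^{2})$, one has $\ACA_{0}$, and the coding of continuous functions becomes explicit, so the approximating sequence $(f_{n})_{n\in\N}$ yields a second-order code for a Baire 1 gauge; one may then invoke the second-order fact that $\ACA_{0}$ proves Cousin's lemma for coded Baire 1 functions. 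Using $\ACA_{0}$ in this branch is legitimate because it holds under the case hypothesis, and the two branches together give a proof in $\RCAo+\WKL_{0}$. By contrast, your plan cannot succeed as described: over second-order codes, Cousin's lemma for Baire 1 functions is \emph{equivalent} to $\ACA_{0}$ (this is in the literature cited alongside \cite{basket, basket2}), so no bookkeeping of ``rational certificates attached to the continuous approximants'' can make the required enumeration available at the $\WKL_{0}$ level; and ``handling the (meagre) spike set separately via the compactness supplied by $\WKL_{0}$'' is circular, since the only compactness $\WKL_{0}$ provides is Heine--Borel for \emph{countable} covers, and producing a countable cover is exactly what fails -- your own spike example shows the rational balls can miss a set of positive measure. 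Any correct proof must exploit the third-order data in an essentially non-uniform way, and the case split on $(\exists^{2})$ is precisely how the paper does so.
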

\begin{proof}
Many instances follow by Theorem \ref{temp1}.  The full proof is in \cite{dagsamXIV}.
\end{proof}
The above results merely constitute examples: many equivalences exist for $\WKL_{0}$ involving Bernstein polynomial approximation and Riemann integration of discontinuous functions (\cite{samBIG3, samBIG2}).  
Moreover, the focus of \cite{dagsamXIV} was on the Big Five beyond $\RCA_{0}$.  Nonetheless, plenty of real analysis can be established in $\RCAo$.
\begin{thm}[$\RCAo$]\label{temp4}~
\begin{itemize}
\item For cadlag $f:[0,1]\di \R$, the discontinuity points can be enumerated.
%\item For cadlag $f:[0,1]\di \R$, the set $[0,1]\setminus B_{}$ can be enumerated.
\item For cadlag $f:[0,1]\di \R$, the continuity points of $f$ are dense.
\item For cadlag $f:[0,1]\di \R$, there is a point of continuity. 
\end{itemize}
We may replace `cadlag' by `quasi-continuity' in the final two items.
\end{thm}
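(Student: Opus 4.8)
The plan is to establish the first (enumeration) item directly, to deduce the second and third items from it in the cadlag case, and to treat the quasi-continuity version of the last two items by a separate nested-interval argument. First I would unwind the definition: a cadlag $f$ is right-continuous with left limits, so at every $x$ the quantity $\osc_{f}(x)$ equals the jump $|f(x)-f(x^{-})|$, and $x$ is a discontinuity exactly when this jump is positive. The discontinuity set is thus $\bigcup_{m}\{x:\osc_{f}(x)>1/m\}$.

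To enumerate it in $\RCAo$ --- crucially without using boundedness of $f$, which by Theorem \ref{temp1} is already equivalent to $\WKL_{0}$ --- I would \emph{localise} each jump by rational data. A discontinuity $x$ with jump exceeding $1/m$ comes with rationals $p<q$ and approximate one-sided values $a,b$ such that $f$ stays near $a$ on $(p,x)$ and near $b$ on $[x,q)$, the latter by right-continuity. For each such rational tuple I would locate the transition point by bisection using $f$ as an oracle: right-continuity makes the `near $b$' region right-closed, so testing midpoints converges primitive-recursively to a single real $x$. Running over all rational tuples then yields a sequence whose range is exactly the discontinuity set.

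Granting the enumeration $(x_{k})_{k}$ of the discontinuities, the second and third items for cadlag functions follow by the standard `avoid a sequence' construction available in $\RCA_{0}$: given any rational interval $(p,q)$, I would build nested closed subintervals $I_{0}\supseteq I_{1}\supseteq\cdots$ with $x_{k}\notin I_{k+1}$ and lengths tending to $0$; the common point lies in $(p,q)$, misses every $x_{k}$, and is therefore a point of continuity. This proves density of the continuity points, and the third item is the special case $(p,q)=(0,1)$. For the quasi-continuity version the enumeration is unavailable (a quasi-continuous function may have uncountably many discontinuities), so instead I would run a nested-interval argument driven directly by quasi-continuity: inside any rational interval, pick a point, use quasi-continuity there to pass to a rational subinterval on which $f$ oscillates by at most $1/2^{k}$, and iterate; the intersection point is continuous. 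I would stress that this must be done \emph{without} the Baire category theorem, which in this setting is one of the new `Big' systems and is far stronger than $\RCAo$.

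\textbf{The main obstacle} is effectivity inside $\RCAo$. In both constructions the selection of the next rational subinterval is justified by an existence statement whose matrix contains a universal quantifier over reals (`$f$ oscillates by less than $\varepsilon$ on this interval'), so it is neither decidable nor of the quantifier-free form needed to apply $\QFAC^{1,0}$ naively, and one cannot simply invoke Bolzano--Weierstrass (equivalent to $\ACA_{0}$) to turn `no accumulation of large jumps' into finiteness. The resolution I would pursue is to feed the one-sided structure --- right-continuity for cadlag, the explicit open witnesses for quasi-continuity --- into the quantifier-free choice $\QFAC^{1,0}$ already present in $\RCAo$, so that the witnessing subintervals are produced by a functional rather than searched for, and to verify exhaustiveness of the enumeration by deriving a failure of the left limit's Cauchy condition from any putative omitted jump, rather than by first extracting an accumulation point. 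The full details are carried out in \cite{dagsamXIV}.
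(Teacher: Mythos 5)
Your overall architecture matches the paper's: the paper likewise reduces everything to the first item (importing the enumeration of jump discontinuities of regulated functions from \cite{dagsamXIV}*{Theorem 2.16} and noting that cadlag functions have only jump discontinuities), derives the last two cadlag items from it, and treats quasi-continuity by a separate argument, citing \cite{samBIG2}*{Theorem 2.19}. Your bisection-over-rational-tuples sketch for the first item is sound in spirit: the bisection never has to \emph{verify} the $\Pi$-over-the-reals hypothesis attached to a tuple (one simply runs it on every tuple, proves correctness only for valid tuples, and notes that under the paper's definition of `enumerable' a sequence listing a superset of $D_{f}$ suffices), and likewise the avoid-a-sequence step for the second and third items is fully effective given the enumeration (approximate $x_{k}$ and pick a closed subinterval away from it). So for the cadlag half, the effectivity worries you raise are largely unfounded, and harmless.

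The genuine gap is in the quasi-continuity half, where the worry is real but your proposed resolution does not work. In the nested-interval construction driven by quasi-continuity, the \emph{construction itself} (not merely its verification) must at each stage select a rational interval on which $f$ oscillates by at most $2^{-k}$, each choice depending on the previous one. $\QFAC^{1,0}$ cannot supply the selecting functional: its matrix must be quantifier-free, whereas the oscillation condition contains a universal quantifier over reals (at best $\Pi^{0}_{1}$ over the rationals, using that quasi-continuous functions are determined by their rational values, and even that equivalence is only established in $\ACAo$); moreover quasi-continuity as defined provides no ``explicit open witnesses''---no modulus or selection functional is part of the data, and adding one would change the theorem. What the paper's source actually uses is the mechanism spelled out in Remark \ref{LEM}, which immediately follows this very theorem: split on $(\exists^{2})\vee\neg(\exists^{2})$. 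Under $\neg(\exists^{2})$, Kohlenbach's result makes every $\R\di\R$-function continuous, so both quasi-continuity items are trivial; under $(\exists^{2})$ one has $\ACA_{0}$ and the functional $\exists^{2}$ to decide the (now arithmetical) oscillation conditions over the rationals, so the nested-interval search becomes an effective unbounded search that provably terminates. Without this case split, or some substitute for it, your argument does not go through in $\RCAo$; also note that the quasi-continuity results live in \cite{samBIG2}, not in \cite{dagsamXIV} to which you defer.
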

\begin{proof}
We only need to prove the first item.  By \cite{dagsamXIV}*{Theorem 2.16}, one can enumerate the jump discontinuities of a regulated function in $\RCAo$.  
Clearly, cadlag functions only have such discontinuities. 
The results for quasi-continuous functions are immediate by \cite{samBIG2}*{Theorem 2.19}.
\end{proof}
Many of the above results were proved via a `trick' or `shortcut' based on the law of excluded middle (LEM), as discussed in Remark \ref{LEM}.
The reader will give no second thought to the observation that all systems of RM make use of classical logic.  
Nonetheless, the following `special case' has been met with surprise.
\begin{rem}[The LEM trick]\label{LEM}\rm
Our starting point is Kleene's arithmetical quantifier $(\exists^{2})$, defined as follows:
\be\label{muk}\tag{$\exists^{2}$}
(\exists E:\N^{\N}\di \{0,1\})(\forall f \in \N^{\N})\big[(\exists n\in \N)(f(n)=0) \asa E(f)=0    \big].
\ee
The system $\ACAo\equiv\RCAo+(\exists^{2})$ is a conservative extension of $\ACA_{0}$ by \cite{hunterphd}*{Theorem 2.5}.
By \cite{kohlenbach2}*{Prop.\ 3.12}, $(\exists^{2})$ is equivalent over $\RCAo$ to the statement 
\begin{center}
\emph{There exists an $\R\di \R$-function that is not continuous.}
\end{center}
Clearly, $\neg(\exists^{2})$ is then equivalent to \emph{Brouwer's theorem}, i.e.\ the statement that all $\R\di \R$-functions are continuous. 
Now, if we wish to prove a given statement $T$ of real analysis about possibly discontinuous functions in $\RCAo+\WKL_{0}$, we may invoke the law of excluded middle as in $(\exists^{2})\vee \neg(\exists^{2})$.  
We can then split the proof of $T$ in two cases: one assuming $\neg(\exists^{2})$ and one assuming $(\exists^{2})$.  
In the latter case, since $(\exists^{2})\di\ACA_{0}$, we have access to much more powerful tools (than just $\WKL_{0}$).  
In the former case, since $\neg(\exists^{2})$ implies that all functions are continuous, we only need to establish $T$ restricted to the special case of continuous functions.  
Moreover, we can invoke Theorem \ref{nudge} to provide codes for all (continuous) functions.  After that, we can use the second-order RM literature to establish $T$ restricted to codes for continuous functions, and hence $T$. 
To be absolutely clear, the `LEM trick' is the above splitting of proofs based on $(\exists^{2})\vee \neg(\exists^{2})$.
\end{rem}
Finally, we have extended the RM of $\WKL_{0}$ by numerous basic theorems of real analysis about possibly discontinuous functions.  
One naturally wonders how far this extension can be pushed.  Theorem \ref{temp3} shows that \emph{slight} variations and generalisations of the items in Theorems \ref{temp1}-\ref{temp4}, are no longer provable from the Big Five and much stronger\footnote{The system $\Z_{2}^{\omega}$ consists of $\RCAo$ plus the axioms $(\SS_{k}^{2})$ for any $k$; the latter axiom states the existence of a functional $\SS^{2}_{k}$ that decides $\Pi_{k}^{1}$-formulas in Kleene normal form.  By \cite{hunterphd}*{Cor.\ 2.6}, $\Z_{2}^{\omega}$ is a conservative extension of $\Z_{2}$.\label{omega}} systems like $\Z_{2}^{\omega}$. 
\begin{thm}[$\ACAo$]\label{temp3}
The following principles imply $\NIN_{[0,1]}$, i.e.\ the statement that there is no injection from $[0,1 ]$ to $\N$.
\begin{itemize}
\item A regulated $f:[0,1]\di \R$ is Baire 1 \(or: not totally discontinuous\).
\item A regulated \(or $BV$\) $f:[0,1]\di \R$ has a supremum$^{\ref{footie}}$.
\item An usco $f:[0,1]\di \R$ is Baire 1 \(or: not totally discontinuous\). 
\item \($\open$\) An open set $O\subset [0,1]$ can be represented by an RM-code. 
\item A fragmented $f:[0,1]\di \R$ is Baire 1 \(or: not totally discontinuous\). 
\item A Baire 1$^{*}$ $f:[0,1]\di \R$ is Baire 1 \(or: not totally discontinuous\). 
\item A $B$-measurable function of first class $f:[0,1]\di \R$ is Baire 1 \(or: not totally discontinuous\). 
\item An usco $f:[0,1]\di [0,1]$ has a supremum$^{\ref{footie}}$. 
\item A cliquish $f:[0,1]\di [0,1]$ has a supremum$^{\ref{footie}}$. 
\item A cliquish $f:[0,1]\di [0,1]$ is not totally discontinuous. 
\end{itemize}
The system $\Z_{2}^{\omega}+\QFAC^{0,1}$ cannot prove $\NIN_{[0,1]}$.
\end{thm}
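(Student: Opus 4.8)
The statement comprises two parts: the ten displayed implications to $\NIN_{[0,1]}$ (all over $\ACAo$) and the final non-derivability claim. The plan for the implications is uniform and proceeds by contraposition: assuming $\neg\NIN_{[0,1]}$, i.e.\ fixing an injection $Y:[0,1]\di\N$, I build from $Y$ a single pathological function and show it violates each listed principle. Note first that such a $Y$ yields a discontinuous $\R\di\R$-function, so by the equivalence recalled in Remark~\ref{LEM} we obtain $(\exists^{2})$ for free; thus the contraposition may be carried out over $\ACAo$, as the theorem requires, and we freely use arithmetical comprehension, e.g.\ to produce for distinct reals a separating rational witness.

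The central object is $h(x):=2^{-Y(x)}$ (the function alluded to in the introduction). Since $Y$ is injective, for each $\eps>0$ the set $\{x:h(x)\ge \eps\}=\{x:Y(x)\le \log_{2}(1/\eps)\}$ is finite; hence every deleted neighbourhood of every point carries $h<\eps$ outside a finite set, so both one-sided limits of $h$ vanish everywhere. Consequently $h$ is regulated, upper semi-continuous, and cliquish, and its total variation is at most $2\sum_{n}2^{-n}$, so $h$ is also $BV$. Moreover, because $Y$ witnesses that $[0,1]$ is countable, $[0,1]$ contains no non-empty perfect subset and every level set $h^{-1}(2^{-n})$ is a singleton; exploiting this, one checks that $h$ additionally lies in the classes fragmented, Baire~$1^{*}$, and $B$-measurable of first class under their Section~\ref{FCS} definitions (the classical Lebesgue--Hausdorff-type identifications of these classes with Baire~$1$ break down precisely because the Baire category theorem fails for the now-countable $[0,1]$). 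Yet $h(x_{0})=2^{-Y(x_{0})}>0$ while the limit at $x_{0}$ is $0$, so $h$ is discontinuous at every point, i.e.\ totally discontinuous. This single function therefore refutes the ``not totally discontinuous'' clause of the regulated, usco, cliquish, fragmented, Baire~$1^{*}$ and $B$-measurable items at one stroke.

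For the three supremum items I use the same $h$ together with a diagonal argument. The principle, applied to $h$ (regulated, $BV$, usco and cliquish, with values in $(0,1]$), hands us a supremum operator $\Phi:\Q^{2}\di\R$ with $\Phi(p,q)=\sup_{x\in[p,q]}h(x)=2^{-\min\{Y(x):x\in[p,q]\}}$. For a rational subinterval a bisection search guided by $\Phi$ locates the point realising the minimal $Y$-value on it; and since only finitely many points carry a given or smaller $Y$-value, every $x\in[0,1]$ realises the minimum on all sufficiently small rational intervals about it. Enumerating the countably many rational intervals thus yields a sequence listing every point of $[0,1]$, and a Cantor diagonalisation, available over $\ACAo$, produces a real outside the list, a contradiction. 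Hence these items imply $\NIN_{[0,1]}$. The open-set item is handled analogously: an RM-code for a suitable open set built from $Y$ lets one enumerate $[0,1]$, which is the implication $\open\di\enum$ of Figure~\ref{xxx}, whence again $\NIN_{[0,1]}$ via diagonalisation; a self-contained version uses Borel's height functions.

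Finally, for non-derivability I would exhibit a model $\mathcal M$ of $\Z_{2}^{\omega}+\QFAC^{0,1}$ with $\mathcal M\models\neg\NIN_{[0,1]}$. Start from a countable coded $\omega$-model $(\N,\mathcal S)$ of $\Z_{2}$, chosen as a $\beta$-model correct at each $\Pi^{1}_{k}$-level so that the functionals witnessing the axioms $(\SS_{k}^{2})$ are interpreted soundly; its set of reals $R=[0,1]^{\mathcal M}$ is countable, so a meta-theoretic injection $\iota:R\di\N$ exists. I then build the higher types by a single $\omega$-length closure that, stage by stage, (i) adjoins Skolem witnesses for each $\QFAC^{0,1}$-instance whose parameters have already appeared, (ii) retains $(\exists^{2})$ and the $(\SS_{k}^{2})$-functionals and is closed under application, and (iii) extends the injection on the newly created reals using a fresh infinite block of naturals at each stage. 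The union is countable, models $\QFAC^{0,1}$ and all $(\SS_{k}^{2})$ by construction, and the amalgamated $\iota$ is a genuine third-order injection $R\di\N$, so $\mathcal M\models\neg\NIN_{[0,1]}$; the conservativity noted in footnote~\ref{omega} confirms the second-order part remains a model of $\Z_{2}$. The main obstacle is exactly this simultaneous closure: the $\QFAC^{0,1}$-instances may take $\iota$ as a parameter, so adding witnesses enlarges $R$, which in turn must be re-injected; the fresh-block bookkeeping has to be arranged so the process closes off at a countable stage while preserving injectivity and without the comprehension functionals forcing $R$ to become uncountable. Verifying that the fixed $(\SS_{k}^{2})$-functionals stay correct throughout this closure is the delicate point.
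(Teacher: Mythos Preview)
Your contrapositive framework and use of $h(x)=2^{-Y(x)}$ match the paper's proof essentially verbatim, including the interval-halving argument for the supremum items. Two points deserve comment.

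First, there is a gap: you only refute the parenthetical ``not totally discontinuous'' alternatives. The primary clauses (``a regulated/usco/fragmented/Baire~$1^{*}$/$B$-measurable function \emph{is Baire~1}'') are left open, and your argument that $h$ is totally discontinuous does not by itself contradict $h$ being Baire~1: in the presence of $\neg\NIN_{[0,1]}$ the Baire category theorem fails, so the classical implication ``Baire~1 $\Rightarrow$ has a continuity point'' is unavailable. The paper closes this gap differently: by Theorem~\ref{temp1} (with $\Xi=$ Baire~1), bounded Baire~1 functions have a supremum operator over $\ACAo$; hence if any of the listed principles forces $h$ to be Baire~1, one immediately obtains $\sup_{[p,q]}h$ and feeds it into your interval-halving enumeration. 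You should add this one-line reduction.

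Second, your treatment of the $\open$ item is too loose. The paper's reduction is concrete: characteristic functions of closed sets are usco, and an RM-code for the open complement lets one compute $\sup_{[p,q]}\mathbb{1}_{C}$ for rational $p,q$; applying this to the closed sets $\{x:h(x)\ge 2^{-n}\}$ (finite, hence closed) yields the supremum operator for $h$, and one is back in the already-handled case. Your appeal to Figure~\ref{xxx} and height functions is circular in this context.

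For the final non-derivability clause, the paper simply cites \cite{dagsamX}; your sketched countable-$\beta$-model-plus-closure construction is in the right spirit, but the ``delicate point'' you flag---keeping the $(\SS_{k}^{2})$-functionals correct while $\QFAC^{0,1}$ and the injection chase each other---is exactly where the work lies, and is not resolved here.
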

\begin{proof}
The final sentence is proved in \cite{dagsamX}. 
The other results have straightforward proofs-by-contradiction as follows: let $Y:[0,1]\di \N$ be injective and define $f(x):=\frac{1}{2^{Y(x)+5}}$.    
This function satisfies the conditions of the items in the theorem, but is totally discontinuous. Moreover, using the supremum operator, the usual interval halving technique yields an enumeration of $[0,1]$, i.e.\ a contradiction. 
Indeed, we can decide $\sup_{x\in [0, \frac12]}f(x)<\sup_{x\in [0,1]}f(x)$ using $(\exists^{2})$ and thus obtain the first bit of the binary expansion of $x_{0}\in [0,1]$ such that $f(x_{0})=\sup_{x\in [0,1]}f(x)$.  
Continuing in this way, one obtains the binary expansion of $x_{0}$; we then repeat the same process for $f$ capped below $f(x_{0})$, ultimately enumerating all of $[0,1]$ since $Y$ is an injection. 
Finally, bounded Baire~1 functions have a supremum by Theorem \ref{temp1}, while closed sets have usco characteristic functions, i.e.\ the supremum of the latter readily yields an RM-code. 
\end{proof}
In conclusion, we have established the equivalence between $\WKL_{0}$ and a number of theorems from real analysis.  
We have also shown that slight generalisations or variations are no longer provable in the Big Five and much stronger systems.  
A reasonable explanation of this phenomenon may be found in Section \ref{seor}. 
%We now argue that the function classes in Theorems \ref{temp1} and \ref{temp2} on one hand, and Theorem \ref{temp3} on the other hand, are closely related.

\subsection{Equivalences for the other Big Five}\label{othersketch}

\subsubsection{Arithmetical comprehension}
We discuss equivalences between arithmetical comprehension and restrictions of the Jordan decomposition theorem.
The full version of the latter is part of the RM of the enumeration principle (Section \ref{Esec}) and hence not provable in $\Z_{2}^{\omega}+\QFAC^{0,1}$ from Footnote \ref{omega}.  

\smallskip

First of all, the fundamental theorem about $BV$-functions (see e.g.\  \cite{jordel}*{p.\ 229}) is as follows.
\begin{thm}[Jordan decomposition theorem]\label{drd}
A function $f : [0, 1] \di \R$ of bounded variation is the difference of two non-decreasing functions $g, h:[0,1]\di \R$.
\end{thm}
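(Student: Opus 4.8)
The plan is to recover the classical decomposition through the \emph{total variation function}. For each $x\in[0,1]$ I would set
\[
V_{f}(x):=\sup\Big\{\textstyle\sum_{i=1}^{n}|f(t_{i})-f(t_{i-1})| \;:\; 0=t_{0}<t_{1}<\cdots<t_{n}=x\Big\},
\]
with $V_{f}(0):=0$, where the supremum ranges over all finite partitions of $[0,x]$. Because $f$ is of bounded variation, every such partition sum is bounded above by the total variation $V_{f}(1)$, so each $V_{f}(x)$ is a finite real and the function $V_{f}:[0,1]\di\R$ is well-defined.

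Next I would verify the two monotonicity facts on which the decomposition rests. For $0\leq x<y\leq 1$, any partition of $[0,x]$ extends to a partition of $[0,y]$ by adjoining the endpoint $y$, which only increases the associated sum; hence $V_{f}(y)\geq V_{f}(x)$ and $V_{f}$ is non-decreasing. Put $g:=V_{f}$. It then suffices to show that $h:=V_{f}-f$ is non-decreasing: for $x<y$, including the two-point partition $\{x,y\}$ in the computation of $V_{f}(y)$ gives $V_{f}(y)-V_{f}(x)\geq |f(y)-f(x)|\geq f(y)-f(x)$, so that $h(y)-h(x)=(V_{f}(y)-V_{f}(x))-(f(y)-f(x))\geq 0$.

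Finally, the identity $f=V_{f}-(V_{f}-f)=g-h$ exhibits $f$ as the difference of the two non-decreasing functions $g$ and $h$, which is the claim. The step requiring genuine care — and essentially the only place where anything beyond bookkeeping is needed — is the very first one: securing the supremum defining $V_{f}(x)$ as an honest real number. Classically this is immediate from the boundedness of the partition sums, but it is precisely this reliance on suprema of bounded sets (and on producing $g,h$ as genuine third-order objects) that makes a faithful higher-order version delicate, consistent with the remark preceding the statement that the full principle is unprovable even in $\Z_{2}^{\omega}+\QFAC^{0,1}$.
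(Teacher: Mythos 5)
Your proof is the standard total-variation decomposition ($g:=V_f$, $h:=V_f-f$), which is precisely the classical argument the paper relies on: Theorem \ref{drd} is stated there as a classical result with a citation to the literature, and where the paper actually needs the argument (the proof of Theorem \ref{XZ}) it runs exactly this decomposition once the suprema defining the variation become available. You have also correctly pinpointed the crux in the higher-order setting, namely the existence of the supremum $V_f(x)$ as a third-order object, which is exactly why the paper's analysis places the full theorem beyond $\Z_{2}^{\omega}+\QFAC^{0,1}$ (it implies $\NIN_{[0,1]}$ and is equivalent to $\enum$ by Theorem \ref{tach}), while the restrictions in Theorem \ref{XZ} drop to $\ACA_{0}$ because there the discontinuity points can be enumerated and the suprema reduce to countable, second-order data.
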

Theorem \ref{drd} has been studied extensively via second-order representations in e.g.\ \cites{groeneberg, kreupel, nieyo, verzengend}.
We study certain restrictions as follows. 
\begin{thm}[$\RCAo+\QFAC^{0,1}$]\label{XZ} The following are equivalent to $\ACA_{0}$.
\begin{itemize}
\item The Jordan decomposition theorem for cadlag $BV$-functions.
\item The Jordan decomposition theorem for $BV$-functions satisfying the equality $f(x)=\frac{f(x+)+f(x-)}{2}$ for $x\in (0,1)$.
%\item The Jordan decomposition theorem for Baire 1 $BV$-functions.???
\item The Jordan decomposition theorem for quasi-continuous $BV$-functions.
\item A number of variations of the Jordan decomposition theorem involving Bernstein polynomials \(see \cite{samBIG3}*{Theorem 2.4}\).
\end{itemize}
%We do not need $\IND_{2}$ for the first item. 
\end{thm}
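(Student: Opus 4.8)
The plan is to reduce everything to Kleene's quantifier $(\exists^{2})$, since $\ACAo\equiv\RCAo+(\exists^{2})$ as recorded in Remark \ref{LEM}; over the base theory $\RCAo+\QFAC^{0,1}$ it therefore suffices to prove each item from $(\exists^{2})$ and, conversely, to derive $(\exists^{2})$ from each item. The structural reason these three restrictions sit exactly at the level of $\ACA_{0}$—rather than at the much higher level of the full Jordan decomposition theorem, which belongs to the RM of the enumeration principle—is that a cadlag, a quasi-continuous, or a `midpoint-regularised' $BV$-function is \emph{determined by its values on a countable dense set}. Consequently its total variation is a supremum of partition sums ranging over a \emph{sequence} of rational partitions, and such a bounded supremum is computable from $(\exists^{2})$. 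The last item, on Bernstein polynomials, I would simply import from \cite{samBIG3}*{Theorem 2.4}.

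For the forward direction I would, given $f$ in the relevant class, define $g(x):=V_{0}^{x}(f)$ as the supremum of the variation sums $\sum_{i}|f(t_{i+1})-f(t_{i})|$ over all finite rational partitions $t_{0}<\dots<t_{m}$ of $[0,x]$. The key lemma is that for each class this rational supremum equals the genuine total variation: for cadlag $f$ one approximates arbitrary partition points from the right, for quasi-continuous $f$ one slides each partition point to a nearby continuity point, and when $f(x)=\frac{f(x+)+f(x-)}{2}$ one uses the two one-sided limits. Here $\QFAC^{0,1}$ is used to select the approximating sequences and the one-sided limits uniformly in $x$. Since $f$ is $BV$ the partition sums are bounded, so $(\exists^{2})$ returns the supremum $g(x)$ as a genuine real for every $x$; one then checks that $g$ and $h:=g-f$ are non-decreasing, so that $f=g-h$.

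For the reverse direction I would code the range of an arbitrary injection $\phi:\N\di\N$ into the jumps of an oscillating $BV$-function. Fixing well-separated rationals $s_{k}:=1-2^{-k}$ and letting $f$ make a matched pair of jumps, $+2^{-n}$ and then $-2^{-n}$, straddling $s_{\phi(n)}$ for each $n$, the resulting $f$ is a genuine non-monotone cadlag $BV$-function definable directly from $\phi$ in $\RCAo$, discontinuous at $s_{k}$ exactly when $k\in\operatorname{range}(\phi)$. Given any decomposition $f=g-h$ into non-decreasing functions supplied by the item, the jump of $g$ at $s_{k}$ minus the jump of $h$ there equals the jump of $f$ at $s_{k}$; since both summands are non-negative, $g$ must jump by at least $2^{-n}$ at $s_{\phi(n)}$. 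Evaluating the monotone $g$ at rationals bracketing each $s_{k}$, and using that a bounded monotone function has only finitely many jumps above any threshold, I would decide membership in $\operatorname{range}(\phi)$, which is equivalent to $\ACA_{0}$ over $\RCAo$.

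I expect the main obstacle to live in two matching places. In the forward direction the delicate point is proving \emph{inside the weak base theory} that the supremum over rational partitions really computes the full variation; this is exactly where the defining structure of each class—right-continuity, density of continuity points, or the midpoint identity—must be combined with $\QFAC^{0,1}$ to manufacture the relevant sequences. In the reverse direction the corresponding difficulty is that an \emph{arbitrary} decomposition $f=g-h$ need not be the minimal (variation) one and may carry phantom matched jumps of $g$ and $h$ at continuity points of $f$; the detection procedure must therefore read off jumps of $f$ rather than of $g$ alone, and must do so by a test that is arithmetical in the given data and does not covertly presuppose $(\exists^{2})$. Reconciling this with the localisation of the jumps of the bounded monotone function $g$ is, I anticipate, the technical heart of the reversal.
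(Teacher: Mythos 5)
Your forward direction follows the same basic idea as the paper's (these classes have only jump discontinuities, hence are determined by countable data, so $(\exists^{2})$ can form the relevant suprema), but your key lemma is false as stated, and your reversal contains a fatal circularity.

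The reversal first. Your witness is a genuinely discontinuous (cadlag but not continuous) step function, and you claim it is ``definable directly from $\phi$ in $\RCAo$''. It is not: by the result of Kohlenbach recalled in Remark \ref{LEM} (\cite{kohlenbach2}*{Prop.\ 3.12}), the existence of \emph{any} discontinuous $\R\di\R$-function is equivalent to $(\exists^{2})$ over $\RCAo$. Concretely, evaluating your $f$ at a real $x$ requires deciding $x<s_{k}$ versus $x\geq s_{k}$, which is exactly the power of $(\exists^{2})$ and is unavailable in the base theory; if $\RCAo+\QFAC^{0,1}$ could construct your $f$, it would prove $(\exists^{2})$ and hence $\ACA_{0}$ outright, which it does not (otherwise Theorem \ref{XZ} would be vacuous). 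Conversely, if you help yourself to $(\exists^{2})$ in order to build $f$, the argument becomes circular, since $(\exists^{2})$ already implies $\ACA_{0}$ and the Jordan item is never used. Note that the paper's own reversal constructions (e.g.\ Theorem \ref{temp3}) build discontinuous functions only from \emph{third-order} data such as an injection $Y:[0,1]\di\N$; from the second-order datum $\phi$ this is impossible. The repair is the LEM trick of Remark \ref{LEM}: under $(\exists^{2})$, $\ACA_{0}$ is immediate; under $\neg(\exists^{2})$ \emph{all} functions are continuous, the three classes collapse to continuous $BV$-functions, and one must encode, say, a bounded increasing sequence of rationals into a \emph{continuous} $BV$ witness (a series of small tents), as in the second-order literature the paper builds on (\cites{kreupel, nieyo}). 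In that case any decomposition $f=g-h$ also consists of continuous monotone functions, and this is precisely what defuses the phantom-increase obstruction you correctly identify but leave unresolved: continuity of $g$ at the accumulation point of the tents makes $g(1)-g(1-2^{-N})$ a computable squeeze tending to $0$, from which the encoded limit can be extracted.

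As for the forward direction: if a ``rational partition of $[0,x]$'' means that all partition points are rational, then your key lemma fails at irrational discontinuity points. Take $y$ irrational and $f:=\chi_{[y,1]}$, which is cadlag, quasi-continuous and $BV$ (set $f(y):=\frac12$ for the midpoint class): every rational partition of $[0,y]$ sees only the value $0$, so your $g$ has $g(y)=0$ while $V_{0}^{y}(f)=1$, whence $h=g-f$ satisfies $h(y)-h(0)=-1$ and is not non-decreasing. The point your ``approximate partition points from the right'' argument misses is that the right endpoint $x$ itself cannot be approximated from the right while staying inside $[0,x]$. The fix is either to allow $x$ itself as a partition point (rational \emph{interior} points only), so that the partition sums see the value $f(x)$, or to argue as the paper does: first enumerate the set $D_{f}$ of jump discontinuities --- available in weak systems by \cite{dagsamXIV}*{Theorem 2.16}, the functions at hand having no removable discontinuities --- and then take suprema over partitions drawn from $\Q\cup D_{f}$. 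With that correction (and $(\exists^{2})$ to form the suprema) your forward argument goes through and essentially coincides with the paper's.
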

\begin{proof}
By \cite{dagsamXIV}*{Theorem 2.16}, the jump discontinuities of a regulated functions can be enumerated.
The functions at hand only have such discontinuities, i.e.\ we can enumerate the set of discontinuity points.  
With access to the latter, the standard proof of the Jordan decomposition theorem goes through (see \cite{voordedorst}). 
\end{proof}
Secondly, the RM of $\ACA_{0}$ involves basic analysis, like \cite{simpson2}*{IV.2.11 and III.2.2}.
The following generalisations of the latter are equivalent to $\ACA_{0}$ over $\RCAo$.
\begin{itemize}
\item Let $F:C\di \R$ be cadlag (or: quasi-continuous) where $C\subset [0,1]$ is an RM-closed set.  Then $\sup_{x\in C}F(x)$ exists. 
\item Let $F:C\di \R$ be cadlag (or: quasi-continuous) and usco where $C\subset [0,1]$ is an RM-closed set.  Then $F$ attains a maximum value on $C$.
\item Let $(f_{n})_{n\in \N}$ be a Cauchy sequence \(relative to the sup norm\) of continuous functions.  Then the limit function exists and is continuous. 
\item Let $(f_{n})_{n\in \N}$ be a Cauchy sequence \(relative to the sup norm\) of cadlag functions.  Then the limit function exists and is cadlag. 
\end{itemize}
The following theorem should similarly fit the RM of $\ACA_{0}$.
\begin{itemize}
\item The compactness theorem (\cite{thebill}*{Theorem 14.3}) for the Skorohod space (of cadlag functions), which is presented as a generalisation of the Arzel\`a-Ascoli theorem where the latter is part of the RM of $\ACA_{0}$ by \cite{simpson2}*{III.2.9}.   
\item Versions of the Arzel\`a-Ascoli theorem for quasi-continuous functions as in \cite{haloseg2}*{Prop.\ 2.22} and related theorems.
\end{itemize}
Next, we observe again that slight variations of the above principles are not provable from the Big Five.  For instance, the full Jordan decomposition theorem implies $\NIN_{[0,1]}$ by Theorem \ref{tach}.
The same holds for the maximum principle for functions that are usco and regulated (instead of cadlag), as explored in \cite{dagsamXV}.  

\smallskip

In conclusion, we have established the equivalence between $\ACA_{0}$ and some theorems from real analysis.  
We have also shown that slight generalisations or variations are no longer provable in the Big Five and much stronger systems.  
A reasonable explanation of this phenomenon may be found in Section \ref{seor}. 

\subsubsection{Arithmetical transfinite comprehension}
We discuss some equivalences for $\ATR_{0}$ involving real analysis, including restrictions of the Jordan decomposition theorem (Theorem \ref{drd}).  

\smallskip

Now, the RM of $\ATR_{0}$ is a fairly technical affair and this is no different for real analysis.
In particular, we seem to need various instances of the induction axiom, which is not unheard of in second-order RM (\cite{neeman}).
Thus, the base theory becomes somewhat complicated.  For this reason, we merely state that the following items are equivalent to $\ATR_{0}$, assuming 
the base theory $\ACAo$ extended with `enough' induction (see \cite{dagsamXIV}*{\S2.6} for details).
\begin{itemize}
\item For arithmetical formulas $\varphi$ such that 
\be\label{fln2}
(\forall n\in \N)(\exists \textup{ at most one } X\subset \N)\varphi(X, n),
\ee
the set $\{ n\in \N:(\exists X\subset \N)\varphi(X, n)\}$ exists. \label{bont1}
\item For arithmetical $f:[0,1]\di \R$ in $BV$, there is a sequence $(x_{n})_{n\in \N}$ enumerating all points where $f$ is discontinuous.\label{bont2}  
\item For a $\Sigma^1_1$-function  $f:[0,1]\di \R$ in $BV$, there is a sequence $(x_{n})_{n\in \N}$ enumerating all points where $f$ is discontinuous.  \label{bont3}
\item The Jordan decomposition theorem \(Theorem \ref{drd}\) restricted to arithmetical \(or: $\Sigma_{1}^{1}$\) functions in $BV$. \label{bont4}
\item A non-enumerable arithmetical set in $\R$ has a limit point.\label{bont5}
%\item The Jordan decomposition theorem \(Theorem \ref{drd}\) restricted to $\Sigma^1_1$-functions in $BV$. \label{bont5}
%\item The Jordan decomposition theorem \(Theorem \ref{drd}\) for $\Delta_{1}^{1}$???? $BV$-functions. \label{bont6}
%\item For $f:[0,1]\di \R$ in BV and effectively Baire 2, enumerate all points of discontinuity.  
\item Cousin's lemma for codes for Baire 2 functions. 
\item Cousin's lemma for effectively Baire 2 $\Psi:[0,1]\di \R^{+}$.
\item Cousin's lemma for effectively Baire $n$ $\Psi:[0,1]\di \R^{+}$ $(n\geq 2)$.
\end{itemize}
Next, we observe again that slight variations of the above principles are not provable from the Big Five.  For instance, the full Jordan decomposition theorem implies $\NIN_{[0,1]}$ by Theorem \ref{tach}.
The same holds for Cousin's lemma restricted to $BV$-functions, which is proved using $f(x):=\frac{1}{2^{Y(x)+5}}$ from the proof of Theorem \ref{temp3}.

\smallskip

In conclusion, we have established the equivalence between $\ATR_{0}$ and some theorems from real analysis.  
We have also shown that slight generalisations or variations are no longer provable in the Big Five and much stronger systems.  
A reasonable explanation of this phenomenon may be found in Section \ref{seor}. 

\subsubsection{The system $\Pi_{1}^{1}$-comprehension}
We sketch some equivalences for $\FIVE$ involving the supremum principle for effectively Baire 2 functions.  
A slight variation of the latter, namely with `effectively' dropped, is no longer provable in the Big Five and much stronger systems. 

\smallskip

First of all, as to notation, fix $(r_{n})_{n\in \N}$, a standard injective enumeration of the non-negative rational numbers. 
For $B \subset \Q^+$, we say that `$B$ is $\Sigma^1_1$ with parameter $x\in \N^{\N}$', if $A = \{a : r_a \in B\}$ is $\Sigma^1_1$ with parameter $x$. Since we do not always have access to $\Sigma^1_1$-comprehension, we refer to both $A$ and $B$ as (defined) classes.

\smallskip
Secondly, let $\Sigma_{1}^{1}\textsf{-IND}$ be the induction axiom for $\Sigma_{1}^{1}$-formulas.
The proof of the following theorem is relatively involved (see \cite{dagsamXIV}) and we omit it.
\begin{theorem}[$\ACAo$]\label{flapke}
The following are equivalent.
\begin{itemize}
\item For any $x\in \N^{\N}$, any bounded $\Sigma^{1,x}_1$-class in $\Q^+$ has a supremum.\label{birst}
\item An effectively Baire 2 $f:[0,1]\di [0,1]$ has a supremum.\label{becond}
\item For $n \geq 2$, an  effectively Baire $n$ $f:[0,1]\di [0,1]$ has a supremum.\label{bird}
\end{itemize}
Assuming $\QFAC^{0,1}+\Sigma_{1}^{1}\textsf{\textup{-IND}}$, these items are equivalent to $\FIVE$.
\end{theorem}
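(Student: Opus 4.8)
The plan is to prove the three items equivalent over $\ACAo$ by a cycle, and then to sandwich them between $\FIVE$ and $\FIVE$ once $\QFAC^{0,1}+\Sigma_{1}^{1}\textup{\textsf{-IND}}$ is assumed. The conceptual engine throughout is the following dictionary: for an effectively Baire $n$ function $f:[0,1]\di[0,1]$ with $n\geq 2$ fixed, the value predicate $f(x)>r$ unfolds into an \emph{arithmetical} matrix in $x$ (a fixed finite stack of number-quantified limits), so that for each rational $r$ the statement $(\exists x\in[0,1])(f(x)>r)$ is $\Sigma_{1}^{1}$; dually, every bounded $\Sigma_{1}^{1}$-class of rationals can be realised as $\{r\in\Q^{+}:(\exists x)(f(x)>r)\}$ for a suitable effectively Baire $2$ function. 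This is what makes suprema of such functions and suprema of $\Sigma_{1}^{1}$-classes two faces of the same object. Note that $\ACAo$, i.e.\ $\RCAo+(\exists^{2})$, is exactly what lets us speak of discontinuous $f$ and evaluate arithmetical matrices at all.

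For the cycle I would run $1\Rightarrow 3\Rightarrow 2\Rightarrow 1$. For $1\Rightarrow 3$: given effectively Baire $n$ $f$, form $B_{f}:=\{r\in\Q^{+}:(\exists x\in[0,1])(f(x)>r)\}$, which is bounded by $1$ and is $\Sigma_{1}^{1,z}$ in a code $z$ for $f$ by the dictionary; the first item yields $\sup B_{f}$, and density of $\Q$ gives $\sup B_{f}=\sup_{x}f(x)$, treating $\sup f=0$ separately. The step $3\Rightarrow 2$ is the special case $n=2$. For $2\Rightarrow 1$, given a bounded $\Sigma_{1}^{1}$-class $B=\{r_{a}:a\in A\}$ with $A=\{a:T_{a}\textup{ is ill-founded}\}$ for a sequence of trees $(T_{a})$, and rescaling so $r_{a}\in[0,1]$, I would place disjoint subintervals $I_{a}\subset[0,1]$, map each $I_{a}$ onto $2^{\N}$, and set $g_{a}(x):=r_{a}$ when the point coded by $x$ is a branch of $T_{a}$ and $g_{a}(x):=0$ otherwise. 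Since the branch condition is closed in $x$, each $g_{a}$ is upper semicontinuous, hence effectively Baire $1$, uniformly in $a$; the glued function $f:=\lim_{N}\sum_{a\leq N}g_{a}$ is then effectively Baire $2$, with $\sup_{I_{a}}g_{a}=r_{a}$ exactly when $T_{a}$ is ill-founded, so $\sup f=\sup B$. The second item now delivers $\sup B$, closing the cycle.

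For the equivalence with $\FIVE$, the easy direction is $\FIVE\Rightarrow$ first item: $\Pi_{1}^{1}$-comprehension proves $\Sigma_{1}^{1}$-comprehension, so the lower Dedekind cut $\{q\in\Q:(\exists r\in B)(r>q)\}$ exists as a set and names the real $\sup B$. For the reverse I would fix a sequence $(\psi_{n})$ of $\Sigma_{1}^{1,x}$-formulas and aim at $\{n:\psi_{n}\}$, the canonical form of $\Pi_{1}^{1}$-comprehension over $\ACAo$. Put $B_{n}:=\{q\in\Q^{+}:q\leq\tfrac12\}\cup\{q\in\Q^{+}:q\leq 1\wedge\psi_{n}\}$, a uniformly $\Sigma_{1}^{1,x}$ bounded class with $\sup B_{n}=1$ if $\psi_{n}$ and $\sup B_{n}=\tfrac12$ otherwise. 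The first item gives each $\sup B_{n}$; then $\QFAC^{0,1}$ is invoked to collect these into a single sequence of reals $(s_{n})$, after which arithmetical comprehension forms $\{n:s_{n}>\tfrac34\}=\{n:\psi_{n}\}$, which is $\FIVE$.

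The main obstacle is precisely the middle step of this last reduction: the selection predicate ``$s=\sup B_{n}$'' is not quantifier-free but a conjunction of a $\Sigma_{1}^{1}$-statement (``$s$ is approximated from below inside $B_{n}$'') and a $\Pi_{1}^{1}$-statement (``$s$ bounds $B_{n}$''), so $\QFAC^{0,1}$ does not apply verbatim. The fix I would pursue is to have the choice function return not the bare real $s_{n}$ but $s_{n}$ together with witnessing data for its $\Sigma_{1}^{1}$-side, arranging the matrix to be arithmetical and hence quantifier-free under $(\exists^{2})$; driving and verifying this construction uniformly in $n$ is exactly where $\Sigma_{1}^{1}\textup{\textsf{-IND}}$ enters. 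Secondary points to check carefully are that upper semicontinuous functions are \emph{effectively} Baire $1$ and that the gluing in $2\Rightarrow 1$ genuinely yields an effectively Baire $2$ code; both are routine but must be done uniformly to keep the $\Sigma_{1}^{1}$ bookkeeping intact.
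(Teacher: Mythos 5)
Your overall architecture (the value--predicate ``dictionary'', the cycle $1\Rightarrow3\Rightarrow2\Rightarrow1$, and $\FIVE\Rightarrow$ item 1 via the $\Sigma^1_1$ Dedekind cut) is the right skeleton; note that the paper itself omits this proof and defers to \cite{dagsamXIV}, so the comparison below is with what the construction must actually accomplish. Your step $2\Rightarrow1$ contains a fatal error. Kleene normal form for $\Sigma^1_1$ produces trees on $\N$, whose branches live in $\N^{\N}$, not $2^{\N}$. If you take your phrase ``map $I_a$ onto $2^{\N}$'' literally, the branch sets are indeed compact and the $g_a$ usco, but then ill-foundedness reduces via K\"onig's lemma to the arithmetical statement that the tree is infinite, so your function only encodes \emph{arithmetical} classes and item 1 for genuine $\Sigma^1_1$-classes is never obtained. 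If instead you embed $\N^{\N}$ into $I_a$ (say via continued fractions, onto the irrationals), then the image of the branch set $[T_a]$ is closed only \emph{relative to the irrationals}: it is a $G_\delta$ subset of $I_a$, not a closed one, so $g_a$ is \emph{not} usco and its indicator is properly Baire 2 (for the full tree it is the indicator of the irrationals). Moreover, even for genuinely usco functions, the inference ``usco hence effectively Baire 1'' is not available here: as Section \ref{closeness} of the paper points out, it requires infima that $\Z_{2}^{\omega}+\QFAC^{0,1}$ cannot prove to exist. The missing idea is to represent $g_a$ as $r_a$ times the indicator of the $G_\delta$ branch set via a \emph{double} limit: open sets $U^{a}_{n}$ (unions of the basic intervals attached to the nodes of $T_a$ of length $n$) shrinking to the branch set, and continuous functions converging pointwise to each $\chi_{U^{a}_{n}}$; this is effectively Baire 2, arithmetically and uniformly in $a$, and only then does your disjoint-interval gluing go through.

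Your step item $1\Rightarrow\FIVE$ also fails as written, and you have located the problem but not solved it: letting $\QFAC^{0,1}$ return $s_n$ together with witnesses for the $\Sigma^1_1$ half (``$s_n$ is approximated from below inside $B_n$'') only certifies $s_n\leq\sup B_n$. Since $\{q\in\Q^{+}:q\leq\frac12\}\subseteq B_n$ unconditionally, the value $s_n=\frac12$ is lower-witnessed whether or not $\psi_n$ holds, so the chosen sequence need not compute $\{n:\psi_n\}$; the $\Pi^1_1$ half (``$s_n$ is an upper bound'') cannot be existentially witnessed, and invoking $\Sigma_{1}^{1}\textsf{-IND}$ does not repair this. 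A correct route avoids collection altogether: apply item 1 \emph{once} to the single bounded $\Sigma^1_1$-class of finite sums $\big\{\frac13+\sum_{n\in F}3^{-n-2}: F\subseteq\N \text{ finite and } (\forall n\in F)\,\psi_n\big\}$ (the constant term $\frac13$ ensures non-emptiness). All its elements lie in the closed Cantor-type set of reals whose ternary digits are $0$ or $1$, hence so does its supremum $s$, and a short argument in $\ACAo$ shows that the $(n+2)$-nd ternary digit of $s$ equals $1$ if and only if $\psi_n$; arithmetical comprehension applied to $s$ then yields $\{n:\psi_n\}$, i.e.\ $\Sigma^1_1$-comprehension and hence $\FIVE$. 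This is the $\Sigma^1_1$-analogue of the digit-extraction proof that the monotone convergence theorem implies $\ACA_0$; alternatively, if ``has a supremum'' is read as the supremum \emph{operator} of Footnote \ref{footie}, the uniformity in $n$ comes for free. Either way, the mechanism you propose for this step does not work.
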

\noindent
Thirdly, the following theorem suggests that there is a big difference between `Baire 2' and `effectively Baire 2', although these notions are closely related. 
We recall that by Theorem \ref{temp3}, $\Z_{2}^{\omega}+\QFAC^{0,1}$ cannot prove $\NIN_{[0,1]}$.
\begin{thm}[$\RCAo$]\label{tempX}
The following principle implies $\NIN_{[0,1]}$, i.e.\ there is no injection from $[0,1 ]$ to $\N$.
\begin{itemize}
%\item A regulated \(or $BV$\) $f:[0,1]\di \R$ is Baire 1.
\item A Baire 2 function $f:[0,1]\di [0,1]$ has a supremum$^{\ref{footie}}$.
\end{itemize}
%The system $\Z_{2}^{\omega}+\QFAC^{0,1}$ cannot prove $\NIN_{[0,1]}$.
\end{thm}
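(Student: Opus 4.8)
The plan is to argue by contradiction, exactly along the lines of the proof of Theorem \ref{temp3}, but being careful to carry out the key computations in the weak base theory $\RCAo$. So suppose we have the supremum principle for Baire 2 functions, and suppose toward a contradiction that $Y:[0,1]\di\N$ is an injection. As in Theorem \ref{temp3}, set $f(x):=\frac{1}{2^{Y(x)+5}}$; since $Y$ is injective, on any subinterval the values $\frac{1}{2^{Y(x)+5}}$ are distinct and the minimum of $\{Y(x):x\in[p,q]\}\subseteq\N$ is attained at a unique point, so $f$ is a well-defined $[0,1]\di[0,1]$ function whose supremum over any rational subinterval is the dyadic rational $\frac{1}{2^{m_{p,q}+5}}$, where $m_{p,q}:=\min\{Y(x):x\in[p,q]\}$. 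The goal is to feed $f$ to the supremum principle, extract from the resulting operator an enumeration of $[0,1]$, and contradict the fact, provable already in $\RCAo$, that $[0,1]$ admits no enumeration.

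Granting for the moment that $f$ is Baire 2, the principle hands us a supremum operator $\Phi:\Q^{2}\di\R$ as in Footnote \ref{footie}. The crucial point, and the reason the base theory can be just $\RCAo$ rather than $\ACAo$ as in Theorem \ref{temp3}, is that every value $\Phi(p,q)$ is exactly one of the well-separated dyadics $\frac{1}{2^{k+5}}$. Hence I may recover the natural number $m_{p,q}$ from the real $\Phi(p,q)$ by an unbounded search: approximate $\Phi(p,q)$ until it is pinned within $\frac{1}{2^{k+7}}$ of some $\frac{1}{2^{k+5}}$. This needs no instance of $(\exists^{2})$ and yields, in $\RCAo$, a function $h:\Q^{2}\di\N$ with $h(p,q)=m_{p,q}$. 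From $h$ alone I can then invert $Y$: since $Y$ is injective, $h(I)=n$ holds for a dyadic interval $I$ if and only if $\overline{I}$ contains the unique point of value $n$ and no point of smaller value, so \emph{all} dyadic intervals with $h$-value $n$ automatically contain that point and nest down to it. Reading off the binary expansion along this tower produces $Y^{-1}(n)$ whenever it exists, and the resulting map $n\mapsto Y^{-1}(n)$ enumerates all of $[0,1]$, since every $x$ equals $Y^{-1}(Y(x))$. This contradicts the non-enumerability of $[0,1]$, so no injection $Y$ exists, i.e.\ $\NIN_{[0,1]}$ holds.

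The hard part is the step I granted: proving in $\RCAo$ that $f$ is Baire 2. The obvious witness writes $f$ as a pointwise limit of the finite sums $\sum_{k\le m}\frac{1}{2^{k+5}}\chi_{\{Y^{-1}(k)\}}$, but assembling these requires locating the points $Y^{-1}(k)$, which is exactly the inversion we are not allowed to perform before we have $\Phi$; indeed no second-order-codable sequence of continuous functions can converge to $f$ at irrational arguments, since there $f(x)$ genuinely depends on $Y(x)$. I therefore expect the Baire 2 membership to be the crux and to rely on the paper's dedicated construction, namely the explicit totally discontinuous function $h$ of \eqref{mopi} that Section \ref{FCS} places in the class Baire 2 precisely when $[0,1]$ is countable. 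The LEM trick of Remark \ref{LEM} at least disposes of half the problem for free: under $\neg(\exists^{2})$ every $\R\di\R$ function, in particular $Y$, is continuous, and a continuous map of the connected set $[0,1]$ into the discrete $\N$ is constant, hence not injective, so $\NIN_{[0,1]}$ holds with nothing to prove; the real work is the $(\exists^{2})$ case, where one must exhibit $f$ (or $h$) as a genuine Baire 2 function. It is exactly this non-effective reading of `Baire 2' that makes the principle so strong, in contrast with the effectively Baire 2 supremum principle of Theorem \ref{flapke}, and this is why, by the final sentence of Theorem \ref{temp3}, the conclusion $\NIN_{[0,1]}$ escapes even $\Z_{2}^{\omega}+\QFAC^{0,1}$.
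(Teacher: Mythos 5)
Your reduction of the supremum principle to an enumeration of $[0,1]$ is sound, and two of your refinements are genuinely valuable: the observation that the interval-halving argument needs no $(\exists^{2})$ here, because every value $\Phi(p,q)$ is one of the well-separated dyadics $\frac{1}{2^{m+5}}$ and so the exponent $m_{p,q}$ is recovered by a provably terminating (indeed bounded, since $m_{p,q}\leq Y(p)$) search, which is exactly what justifies the base theory $\RCAo$ rather than the $\ACAo$ of Theorem \ref{temp3}; and the tower argument extracting an enumeration from the single operator $\Phi$ (modulo the small repair that $n\mapsto Y^{-1}(n)$ is partial, so one should index the located points by dyadic intervals, or pad with a default value, to get a total sequence of reals to feed to Cantor's theorem).

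However, there is a genuine gap, and you name it yourself: you never prove that $f(x):=\frac{1}{2^{Y(x)+5}}$ is Baire 2; you only ``expect'' this to follow from a dedicated construction elsewhere, and your guess about that construction (the function $h$ of \eqref{mopi} and Section \ref{FCS}) is wrong. The paper's proof of Theorem \ref{tempX} consists of precisely this missing step, and it is short: assuming $\neg\NIN_{[0,1]}$ with injection $Y$, define for each $n\in \N$ the truncation $f_{n}(x):=f(x)$ if $Y(x)\leq n$ and $f_{n}(x):=0$ if $Y(x)>n$. This is a definition by cases on the value $Y(x)$ and requires no locating of the points $Y^{-1}(k)$, contrary to your objection to ``the obvious witness''. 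Since $Y$ is injective, $f_{n}$ vanishes outside a set of at most $n+1$ points, hence $f_{n}$ is Baire 1; and $f_{n}(x)=f(x)$ as soon as $n\geq Y(x)$, so $(f_{n})_{n\in\N}$ converges pointwise to $f$, making $f$ Baire 2 -- in fact making \emph{every} $[0,1]\di[0,1]$ function Baire 2 under $\neg\NIN_{[0,1]}$. Your claim that ``no second-order-codable sequence of continuous functions can converge to $f$ at irrational arguments'' is beside the point: Baire 2 requires a pointwise limit of \emph{Baire 1} functions, not of continuous ones, and it is exactly this intermediate layer -- third-order functions defined from the parameter $Y$, each of finite support -- that absorbs the non-codability you worry about. (The only delicate point in the paper's own argument is that a function with finite but unlocated support is Baire 1; that is where any remaining work lies, not in the parts you supplied, and without the Baire 2 membership the supremum principle cannot be applied to $f$ at all, so your proof does not get off the ground.)
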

\begin{proof}
Similar to the proof of Theorem \ref{temp3}.  In particular, any function is Baire 2 in case $\neg \NIN_{[0,1]}$.  
Indeed, if $Y:\R\di \R$ is injective, the following function is readily seen to be Baire 1 for any $f:\R\di \R$:
\[
f_{n}(x):=
\begin{cases}
0 & Y(x)>n\\
f(x) & Y(x)\leq n
\end{cases}.
\]
Clearly, $(f_{n})_{n\in \N}$ converges to $f$ and hence  any function is Baire 2, including $f(x):=\frac{1}{2^{Y(x)+5}}$ used in the proof of Theorem \ref{temp3}.
\end{proof}
In conclusion, the supremum principle for effectively Baire 2 functions is in the RM of $\FIVE$, while the highly similar 
supremum principle for Baire 2 functions is not provable in the Big Five and much stronger systems.  % $\Z_{2}^{\omega}+\QFAC^{0,1}$.
We discuss a possible explanation in Section \ref{seor}.

\subsection{A form of explanation}\label{seor}
We have observed that many theorems of real analysis are equivalent to the Big Five of RM while slight variations or generalisations are not provable in the Big Five and much stronger systems.  
We provide a possible explanation for this phenomenon in Section \ref{expla}.   We discuss the associated function classes in more detail in Section \ref{closeness}.

\subsubsection{Kleene's observation and beyond}\label{expla}
The following observation goes back to Kleene (\cite{kleenecount}) and underlies the coding of continuous functions in RM (\cite{simpson2}*{II.6.1}).  
\begin{center}
\emph{Continuous functions can be represented via countably much information.}
\end{center}
As it happens, Kleene's observation also holds for many classes of possibly \emph{discontinuous} functions.  
Indeed, the reader is invited to verify that a quasi-continuous or cadlag function $f:[0,1]\di \R$ satisfies the following: the value $f(x)$ for \emph{any} $x\in [0,1]$ is 
determined if we know $f(q)$ for any $q\in [0,1]\cap \Q$, provably in $\ACAo$.   Similarly, the graph of a Baire 1 function $g:[0,1]\di \R$ is determined by $(g_{n}(q))_{n\in \N, q\in \Q}$ if $g$ is the pointwise limit of $(g_{n})_{n\in \N}$, again over $\ACAo$.  

\smallskip

In line with Kleene's centred observation, any member of the aforementioned function classes is determined by countably much information, namely its behaviour on a fixed dense countable set.
As a result, the formula $(\exists x\in [0,1])(f(x)>y)$ is equivalent to an arithmetical formula \emph{only involving second-order parameters}; this equivalence explains why $\RCAo+\WKL_{0}$ or $\RCAo+\ACA_{0}$ suffice to prove the associated theorems, like the supremum principle.  
By contrast, cliquish, usco, and regulated functions are not determined by their behaviour on a fixed dense countable set, as simple examples show.
Moreover, $\Z_{2}^{\omega}+\QFAC^{0,1}$ cannot prove the supremum principle for the latter function classes (Theorem \ref{temp3}).

\smallskip

Things are more complicated in general.  Indeed, for an effectively Baire 2 function $f:[0,1]\di \R$ we only have that `$(\exists x\in [0,1])(f(x)>y)$' is equivalent to a $\Sigma_{1}^{1}$-formula \emph{only involving second-order parameters}.  This equivalence explains why $\ACAo+\FIVE$ proves the associated theorems, like the supremum principle.  
By contrast, for Baire 2 functions $g:[0,1]\di\R$, `$(\exists x\in [0,1])(g(x)>y)$' is in general not equivalent to a second-order formula, say in $\Z_{2}^{\omega}+\QFAC^{0,1}$. 
The latter system does not prove the supremum principle for Baire 2 functions by Theorem \ref{tempX}.

\smallskip

%As it turns out, second-order function classes yield theorems that are generally classified in the Big Five, while non-second-ish function classes generally yield theorems not provable from the Big Five.  
In general, the first group of function classes -including cadlag and effectively Baire 2- comes with 
a kind of second-order `approximation device' while the second class -including regulated and Baire 2- mostly lacks such a device. 
This suggests the following heuristic notion:  we refer to a function class $\Gamma$ as\footnote{The reader will known that the English language boasts the modifier `ish', which means `sort of': a green-ish object is sort of green.} \emph{second-order-ish} if the definition of $\Gamma$ contains additional information, also called an `approximation device', that guarantees that $\Gamma$'s basic third-order properties have equivalent second-order formulations.  
In particular, for second-order-ish $f:[0,1]\di \R$, formulas like `$(\exists x\in [0,1])(f(x)>y)$'  should be equivalent to second-order formulas only involving second-order parameters; this equivalence should be provable in $\Z_{2}^{\omega}+\QFAC^{0,1}$ or (much) weaker systems.

\smallskip

With the gift of hindsight, properties of second-order-ish function classes can be established in $\RCAo$ extended with the Big Five systems, which is one of the main observations of \cite{dagsamXIV}.  
By contrast, basic properties of non-second-order-ish functions can often not be proved in $\Z_{2}^{\omega}$ or even $\Z_{2}^{\omega}+\QFAC^{0,1}$.  In particular, the above equivalences seem to be robust as long as we stay within the second-order-ish function classes, or dually: within the non-second-order-ish ones.  In the next section, we establish that second-order-ish and non-second-order-ish function classes can be very close from the point of view of mathematics. 

\subsubsection{Mathematically versus logical closeness}\label{closeness}
We identify a number of pairs of function classes that are closely related from the point of view of mathematics, but for which only one is second-order-ish. 

\smallskip

First of all, quasi-continuity and cliquishness are closely related notions, not just in the sense of having similar definitions.  
Indeed, cliquish functions are exactly those functions that can be expressed as the sum of two quasi-continuous functions (\cites{quasibor2, malin}).  
The pointwise limit (if it exists) of quasi-continuous functions, is always cliquish (\cite{holausco}*{Cor.\ 2.5.2}).
Cliquish functions are exactly the pointwise discontinuous functions, i.e.\ the final item of Theorem \ref{temp3} is perhaps surprising. 
Nonetheless, the class of quasi-continuous functions is second-order-ish, while the class of cliquish functions is not. 

\smallskip

Secondly, $BV$, cadlag, and regulated functions all have at most countably many points of discontinuity on the reals.   
The cadlag $BV$ functions are known as \emph{normalised} $BV$ and are essential to the Riesz representation theorem (\cite{voordedorst}). 
Nonetheless, the class of cadlag functions is second-order-ish, while the class of regulated (or $BV$) functions is not. 

\smallskip

Thirdly, an usco function $f:[0,1]\di \R$ is Baire 1, which is usually shown by considering the increasing sequence of continuous functions $f_{n}(x):=\inf_{y\in [0,1]} (f(y)+n|x-y|)$.  
By Theorem \ref{temp3}, the latter infimum is generally not available in $\Z_{2}^{\omega}+\QFAC^{0,1}$.  
Moreover, the notions of Baire 1, fragmented, and $B$-measurable function of first class, are equivalent (\cites{leebaire,beren,koumer}).   
Nonetheless, the class of Baire 1 functions is second-order-ish, while the other function classes are not.

\smallskip

Fourth, Baire states in \cite{beren2}*{p.\ 69} that Baire 2 functions can be \emph{represented} by effectively Baire~2 functions.   
The difference lies in the fact that the class of effectively Baire 2 functions is second-order-ish, while the class of Baire 2 functions is not.  Similar observations can be made for the class of (effectively) Baire $n$ functions. 

\section{New Big systems}  \label{fluky2}
\subsection{Introduction}
In this section, we describe the RM-equivalences for four new `Big' systems from real analysis, where the latter are as follows.  
\begin{itemize}
\item The uncountability of $\R$ (Section \ref{Rsec}).
\item The enumeration principle (Section \ref{Esec}).
\item The Baire category theorem (Section \ref{BCTsec}).
\item  Tao's pigeon hole principle for the Lebesgue measure. (Section \ref{Taosec}).
\end{itemize}
The relationships among these principles are sketched in Figure \ref{xxx}. 
While these principles are fundamentally different, the proofs of the associated equivalences follow a rather uniform template, described next.
\begin{temp}\label{tempy}\rm
First of all, in principle, the development of real analysis consists in proving interesting properties of \emph{functions} $f:\R\di\R$ from the axioms of \emph{set} theory. 
Thus, the following continuity and discontinuity sets play a central role.
\begin{center}
$C_{f}=\{x\in \R:\textup{$f$ is continuous at $x$}\}$.\\
$D_{f}=\{x\in \R:\textup{$f$ is discontinuous at $x$}\}$.
\end{center}
In particular, the sets $C_{f}$ and $D_{f}$ allow us to derive properties of functions from properties of sets, i.e.\ the usual direction.

\smallskip

Secondly, the reverse direction, namely deriving properties of sets from properties of functions, shall be done in the below using 
the function $h:[0,1]\di \R$:
\be\label{mopi}
h(x):=
\begin{cases}
0 & x\not \in \cup_{n\in \N}X_{n}\\ 
\frac{1}{2^{n}} & x \in X_{n} \textup{ and $n$ is the least such number}
\end{cases}.
\ee
where $(X_{n})_{n\in \N}$ is a sequence of closed sets.   In particular, $h$ has nice properties in general by Theorem \ref{propi}.  Applying a theorem of real analysis to $h$, one establishes certain properties of the underlying sets $(X_{n})_{n\in \N}$.  
\end{temp}
We note that $h$ is known in real analysis (\cite{myerson}).  Most equivalences sketched in Sections \ref{Rsec}-\ref{Taosec} are established using the previous template or a variation.  

\smallskip

Finally, the above template is of course not completely `turn key': while $h$ is arithmetical in its parameters, the definition of the sets $C_{f}$ and $D_{f}$ involves a quantifier over the reals.  
Moreover, it is not immediate that $h$ belongs to any well-known function class.  These problems all have straightforward solutions inspired by the practice/mainstream of mathematics, as will become clear below. 

\subsection{Properties of countable sets}
\subsubsection{Introduction and preliminaries}\label{horsies}
In this section, we discuss the RM-equivalences for two new `Big' systems, namely the uncountability of $\R$ (Section \ref{Rsec}) and the enumeration principle for countable sets (Section \ref{Esec}).
Clearly, these principles deal with countable sets of reals, a notion that can be formalised in various ways, some of which are listed in Definition \ref{charz}.  
Sets are given by characteristic functions, well-known from measure, probability theory, and second-order RM (\cite{simpson2}*{X.1.12}).  
\bdefi[Sets]\label{charz}~
\begin{itemize}
\item A subset $A\subset \R$ is given by its characteristic function $F_{A}:\R\di \{0,1\}$, i.e.\ we write $x\in A$ for $ F_{A}(x)=1$, for any $x\in \R$.
\item A subset $O\subset \R$ is \emph{open} if $x\in O$ implies $(\exists k\in \N)(B(x, \frac{1}{2^{k}})\subset O)$.
\item A subset $O\subset \R$ is \emph{RM-open} in case there are sequences of reals $(a_{n})_{n\in \N}, (b_{n})_{n\in \N}$ such that $O=\cup_{n\in \N}(a_{n}, b_{n})$.
\item A subset $C\subset \R$ is \emph{closed} if the complement $\R\setminus C$ is open. 
\item A subset $C\subset \R$ is \emph{RM-closed} if the complement $\R\setminus C$ is RM-open. 
\item A set $A\subset \R$ is \emph{enumerable} if there is a sequence of reals that includes all elements of $A$.
\item A set $A\subset \R$ is \emph{countable} if there is $Y: \R\di \N$ that is injective on $A$, i.e.\
\be\label{beg}
(\forall x, y\in A)( Y(x)=_{\N}Y(y)\di x=_{\R}y).  
\ee
\item A set $A\subset \R$ is \emph{strongly countable} if there is a bijection $Y: A\di \N$, i.e.\  $Y:\R\di \N$ satisfies \eqref{beg} and $(\forall n\in \N)(\exists x\in A)(Y(x)=n)$.
\item A set $A\subset \R$ is \emph{finite} if there is $N\in \N$ such that for any finite sequence $(x_{0}, \dots, x_{N})$ of distinct reals, there is $i\leq N$ such that $x_{i}\not \in X$.
\item A set $A\subset \R$ is \emph{height-countable} if there is a \emph{height} function $H:\R\di \N$ for $A$, i.e.\ for all $n\in \N$, $A_{n}:= \{ x\in A: H(x)<n\}$ is finite.  % \(Definition \ref{deadd}\). 
\item A set $A\subset \R$ is \emph{height-width-countable} if there is a \emph{height} function $H:\R\di \N$ and width function $g\in \N^{\N}$ for $A$, i.e.\ for all $n\in \N$, $A_{n}:= \{ x\in A: H(x)<n\}$ has at most $g(n)$ elements. 
\end{itemize}
\edefi
Regarding height-countability, the notion of `height function' has been studied by Borel (\cite{opborrelen3, opborrelen4, opborrelen5}) and plays a central role in the below, as we discuss next.  
Simply put, height-countability constitutes the most general notion of countability and one readily comes across such sets `in the wild', i.e.\ there is no immediate injection to $\N$, let alone an enumeration.
A good example is provided by {regulated} functions, a notion already studied by Darboux around 1875 in \cite{darb}.  
\begin{exa}[Regulated functions]\label{tekken}\rm
A function $f:[0,1]\di \R$ is regulated if the left and right limits $f(x-)$ and $f(x+)$ exist.  Using the latter, we define the set $D_{f}$ of discontinuity points as
\be\label{drux}
D_{f}:= \{x\in [0,1]:  f(x)\ne_{\R} f(x+)\vee f(x)\ne_{\R} f(x-) \},
\ee
which is arithmetical with $f$ as a parameter.  
Many textbooks establish that $D_{f}$ is countable for regulated $f$ (see e.g.\ \cite{voordedorst, rudin}).  
Nonetheless, one cannot construct an injection from $D_{f}$ to $\N$, let alone an enumeration, in rather strong logical systems.  
By contrast, working in $\ACAo+\QFAC^{0,1}$, one shows that $D_{f}$ is height-countable by considering $D_{f}=\cup_{k\in \N}D_{k}$, where the set
\[\textstyle
D_{k}:= \{x\in [0,1]:  |f(x)- f(x+)|>\frac{1}{2^{k}}\vee |f(x)-f(x-)|>\frac{1}{2^{k}} \}
\]
is finite (in the sense of Definition \ref{horsies}) using a standard compactness argument.  
\end{exa}
Another good example is the smaller class of functions of bounded variation, central to Fourier analysis and going back to Jordan (1895, \cite{jordel}). 
\begin{exa}[Bounded variation]\label{tekken2}\rm
A function $f:[0,1]\di \R$ has bounded variation ($BV$ for short) if if there is $k_{0}\in \N$ such that $k_{0}\geq \sum_{i=0}^{n-1} |f(x_{i})-f(x_{i+1})|$ 
for any partition $x_{0}=0 <x_{1}< \dots< x_{n-1}<x_{n}=1  $.  
For $BV$-functions, the \emph{total variation} is then defined as follows:
\be\label{tomb}\textstyle
V_{0}^{1}(f):=\sup_{0\leq x_{0}< \dots< x_{n}\leq 1}\sum_{i=0}^{n-1} |f(x_{i})-f(x_{i+1})|.
\ee
A $BV$-function is regulated, even in weak logical systems (\cite{dagsamXI}*{Theorem 3.33}), while Weierstrass' monster function is continuous but not in $BV$.
For a $BV$-function, the set $D_{f}$ from \eqref{drux} is height-width countable: we have $D_{f}=\cup_{k\in \N}D_{k}$ where $D_{k}$ has at most $V_{0}^{1}(f)\cdot 2^{k}$-many elements.  
By contrast, one cannot construct an injection from $D_{f}$ to $\N$, let alone an enumeration, in rather strong logical systems.  
\end{exa}
In light of Examples \ref{tekken}-\ref{tekken2}, developing real analysis in a weak system readily gives rise to height-countable sets that do not come with injections to $\N$ or enumerations.  
This suggests that height-countability is the right formalisation of `countable set' for the RM-study of real analysis, as also confirmed by the results in the next sections.     

\smallskip

Finally, Example \ref{tekken} highlights another important aspect of the RM of analysis, namely that we work over $\ACAo$ from Remark \ref{LEM}.
With this base theory, the set $D_{f}$ from \eqref{drux} has its usual definition via a characteristic function; defining this set in $\RCAo$ is much more tricky.
Moreover, much stronger systems (than $\ACAo+\QFAC^{0,1}$) cannot prove the uncountability of the reals, formulated as the statement that the unit interval is not (height-)countable (\cite{dagsamX}).  
The latter is the weakest of the new `Big' systems in this section.   
\subsubsection{The uncountability of the reals}\label{Rsec}
We discuss the RM of the uncountability of the reals formulated as follows, bearing in mind Section \ref{horsies}
\begin{princ}[$\NIN_{\alt}$]
{The unit interval is not height-countable.  }
\end{princ}
We have previously studied the uncountability of $\R$ formulated as `there is no injection from $[0,1]$ to $\N$' in \cite{dagsamX}, but failed to obtain many interesting equivalences. 
By contrast, height-countability readily yields nice equivalences, as follows.

\begin{thm}[$\ACAo+\QFAC^{0,1}$]\label{flonks}
The following are equivalent.
\begin{itemize}
\item A regulated function $f:[0,1]\di \R$ is not totally discontinuous. 
\item The principle $\NIN_{\alt}$.
\end{itemize}
\end{thm}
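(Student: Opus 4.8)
The plan is to prove the equivalence by two contradictions, using Example \ref{tekken} for the easy (downward) direction and the template function $h$ from \eqref{mopi} for the hard (upward) one. Throughout I work over $\ACAo+\QFAC^{0,1}$, so that $(\exists^{2})$ is available to evaluate quantifiers over the reals and to compute the one-sided limits $f(x\pm)$; the forward direction genuinely needs $\QFAC^{0,1}$ through Example \ref{tekken}, while the reverse direction uses only $\ACAo$.

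For the downward implication, assume $\NIN_{\alt}$ and let $f:[0,1]\di\R$ be regulated. By Example \ref{tekken}, the discontinuity set $D_{f}$ from \eqref{drux} is height-countable: writing $D_{f}=\cup_{k\in\N}D_{k}$ with each $D_{k}$ finite and increasing in $k$, the map sending $x$ to the least $k$ with $x\in D_{k}$ is a height function for $D_{f}$, since $\{x:H(x)<n\}\subseteq D_{n-1}$ is finite. If $f$ were totally discontinuous, then $D_{f}=[0,1]$, so $[0,1]$ would be height-countable, contradicting $\NIN_{\alt}$. Hence $f$ has a continuity point, i.e.\ $f$ is not totally discontinuous.

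For the reverse implication, suppose every regulated function is not totally discontinuous, and assume toward a contradiction that $[0,1]$ is height-countable, witnessed by a height function $H:\R\di\N$. I instantiate the template \eqref{mopi} with the finite (hence closed) sets $X_{n}:=\{x\in[0,1]:H(x)\le n\}$; since every $x\in[0,1]$ lies in some $X_{n}$, the template collapses to $h(x)=\frac{1}{2^{H(x)}}$, which is arithmetical in $H$ and respects $=_{\R}$. Total discontinuity of $h$ is immediate: fix $x_{0}\in[0,1]$ and $n\in\N$; as $A_{n}=\{x:H(x)<n\}$ is finite in the sense of Definition \ref{charz} while every subinterval of $[0,1]$ fails to be finite, each neighbourhood of $x_{0}$ contains a point $x\notin A_{n}$, i.e.\ with $h(x)\le\frac{1}{2^{n}}$. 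Thus $h$ has infimum $0$ on every neighbourhood of $x_{0}$, whereas $h(x_{0})=\frac{1}{2^{H(x_{0})}}>0$, so $h$ is discontinuous at $x_{0}$.

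It remains to verify that $h$ is regulated, which I expect to be the main obstacle, since it requires upgrading the mere bounded cardinality of $A_{n}$ to a quantitative gap around each point. The claim is $h(x_{0}+)=h(x_{0}-)=0$ for every $x_{0}$: given $\eps>0$, choose $n$ with $\frac{1}{2^{n}}<\eps$; as $A_{n}$ is finite it has no accumulation point, so there is $\delta>0$ with $(x_{0},x_{0}+\delta)\cap A_{n}=\emptyset$, whence $h(x)\le\frac{1}{2^{n}}<\eps$ for all $x\in(x_{0},x_{0}+\delta)$, and symmetrically on the left. Formalising ``a finite set has no accumulation point'' in the weak base theory is exactly where $(\exists^{2})$ does the work: the predicate $(\exists x)\big(x_{0}<x<x_{0}+\delta\wedge H(x)<n\big)$ is arithmetical once $H$ is a parameter, so a least-witness search bounded by the cardinality of $A_{n}$ locates the nearest point of $A_{n}$ to the right of $x_{0}$ and produces $\delta$; this is the precise analogue of the compactness argument underlying Example \ref{tekken}. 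Once regulatedness is secured, $h$ is a regulated, totally discontinuous function, contradicting the assumption and yielding $\NIN_{\alt}$.
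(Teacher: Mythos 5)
Your skeleton is the same as the paper's: the forward direction via the height-countability of $D_{f}$ from Example \ref{tekken}, and the reversal by feeding the function $h$ from \eqref{mopi}, instantiated with the finite sets coming from a putative height function for $[0,1]$, into the first item. The forward direction is correct. The gap is in the step you yourself identify as the crux, the regulatedness of $h$: your justification would fail. The predicate $(\exists x\in\R)(x_{0}<x<x_{0}+\delta\wedge H(x)<n)$ is \emph{not} arithmetical---it contains a quantifier over the reals---and $(\exists^{2})$ only decides formulas all of whose quantifiers range over $\N$, however complex the parameters may be. Consequently there is no ``least-witness search'' over $[0,1]$: locating ``the nearest point of $A_{n}$ to the right of $x_{0}$'' amounts to computing $\inf\{y>x_{0}:y\in A_{n}\}$ for a set given only by a third-order characteristic function, and the existence of such infima/suprema is exactly the kind of principle the paper shows to be unprovable even in $\Z_{2}^{\omega}+\QFAC^{0,1}$ (Theorem \ref{temp3}). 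So $(\exists^{2})$ cannot do the work you assign to it, and your parenthetical claim that the reversal ``uses only $\ACAo$'' is unjustified.

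The repair uses the other half of the base theory, namely $\QFAC^{0,1}$, rather than $(\exists^{2})$. Fix $x_{0}$ and $n$, and suppose for contradiction that every interval $(x_{0},x_{0}+2^{-k})$ meets $A_{n}$. The inner statement ``$x_{0}<y<x_{0}+2^{-k}\wedge H(y)<n$'' becomes quantifier-free after coding the existential witnesses to the real inequalities into the choice variable, so $\QFAC^{0,1}$ yields a sequence $(y_{k})_{k\in\N}$ with $y_{k}\in(x_{0},x_{0}+2^{-k})\cap A_{n}$. Since $y_{k}\di x_{0}$ with $y_{k}>x_{0}$, one can---arithmetically in $(y_{k})_{k\in\N}$, hence using only $(\exists^{2})$---extract $N+1$ pairwise distinct terms, where $N$ is the finiteness bound for $A_{n}$ from Definition \ref{charz}; this contradicts that bound. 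Hence some $(x_{0},x_{0}+\delta)$ misses $A_{n}$, and symmetrically on the left, giving $h(x_{0}+)=h(x_{0}-)=0$; the rest of your argument (total discontinuity of $h$ via the infinitude of subintervals, then the contradiction with the first item) is fine. This hidden use of countable choice is precisely why the equivalence is stated over $\ACAo+\QFAC^{0,1}$ and why the paper can describe $h$ as ``readily shown to be regulated''.
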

\begin{proof}
To show that $\NIN_{\alt}$ implies the first item, define $D_{f}$ as in \eqref{drux} and observe that it is height-countable.  
By $\NIN_{\alt}$, there is $x_{0}\in [0,1]\setminus D_{f}$, implying that $f$ is not totally discontinuous.  For the reversal, assume the first item and let $(X_{n})_{n\in \N}$ be a sequence of finite sets in $[0,1]$. 
Now consider the function $h:[0,1]\di \R$ from \eqref{mopi}, which is readily shown to be regulated with $h(x+)=h(x-)=0$ everywhere. 
By the first item, $h$ is continuous at some $x_{0}\in [0,1]$, implying $h(x_{0})=0$ and $x_{0}\not\in \cup_{n\in \N}X_{n}$, i.e.\ $[0,1]\ne \cup_{n}X_{n\in \N}$ for an arbitrary sequence of finite sets, and $\NIN_{\alt}$ follows as required.
\end{proof}
The `reversal' function from \eqref{mopi} is known from the literature (\cite{myerson}), which we only learnt during the writing of \cite{samBIG2}. 
We point out that many variations of the previous result are possible: one can replace `continuous' by `quasi-continuous', or `lower semi-continuous', or `almost continuity' or `the Young condition'.

\smallskip

Next, we list a long list of equivalences; the proof of the previous theorem can be adapted, mostly via minor tricks. 
\begin{thm}[$\ACAo+\QFAC^{0,1}$]\label{flonk}
The following are equivalent.
\begin{enumerate}
\renewcommand{\theenumi}{\alph{enumi}}
\item The uncountability of $\R$ as in $\NIN_{\alt}$.
\item \emph{Volterra's theorem (\cite{volaarde2}) for regulated functions}: there do not exist two regulated functions defined on the unit interval for which the continuity points of one are the discontinuity points of the other, and vice versa.\label{volare1}
\item \emph{Volterra's corollary (\cite{volaarde2}) for regulated functions}: there is no regulated function that is continuous on $\Q\cap[0,1]$ and discontinuous on $[0,1]\setminus\Q$.\label{volare2}
\item For a sequence $(X_{n})_{n\in \N}$ of finite sets in $[0,1]$, the set $[0,1]\setminus \cup_{n\in \N}X_{n}$ is dense \(or: not height countable, or: not countable, or: not strongly countable\).\label{lopi}
\item Any regulated $f:[0,1]\di \R$ is pointwise discontinuous, i.e.\ the set $C_{f}$ is dense in the unit interval. \label{pon3}
\item For regulated $f:[0,1]\di \R$, the set $C_{f}$ is not height countable \(or: not countable, or: not strongly countable, or: not enumerable\). \label{pon35}
\item For regulated $f:[0,1]\di [0,1]$ such that the Riemann integral $\int_{0}^{1}f(x)dx$ exists and is $0$, there is $x\in [0,1]$ with $f(x)=0$. \(Bourbaki, \cite{boereng}*{p.\ 61}\).\label{pon8}
\item For regulated $f:[0,1]\di [0,1]$ such that the Riemann integral $\int_{0}^{1}f(x)dx$ exists and equals $0$, the set $\{x \in [0,1]: f(x)=0\}$ is dense. \(\cite{boereng}*{p.\ 61}\). \label{pon9}
\item Blumberg's theorem \(\cite{bloemeken}\) restricted to regulated functions on $[0,1]$.\label{pon13}
\item For regulated $f:[0,1]\di (0, 1]$, there exist $N\in \N, x\in [0,1]$ such that $(\forall y\in B(x, \frac{1}{2^{N}}))(f(y)\geq \frac{1}{2^{N}})$. \label{pon15}
\item \textsf{\textup{(FTC)}} For regulated $f:[0,1]\di \R$ such that $F(x):=\lambda x.\int_{0}^{x}f(t)dt$ exists, there is $x_{0}\in (0,1)$ where $F(x)$ is differentiable with derivative $f(x_{0})$.\label{ponfar}
\end{enumerate}
\end{thm}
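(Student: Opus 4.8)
The plan is to treat $\NIN_{\alt}$ (item (a)) as a hub and prove that every other item is equivalent to it, using two recurring tools. In the forward direction the crucial input is that for regulated $f:[0,1]\di\R$ the discontinuity set $D_{f}$ from \eqref{drux} is height-countable (Example \ref{tekken}), so that $\NIN_{\alt}$ forces its complement $C_{f}$ to be large. In the reverse direction I would use the single reversal function $h$ from \eqref{mopi}, exactly as in the proof of Theorem \ref{flonks}: for a sequence of finite sets $(X_{n})_{n\in\N}$, $h$ is regulated with $h(x+)=h(x-)=0$ everywhere, so $h$ is continuous at $x$ iff $h(x)=0$ iff $x\notin\cup_{n}X_{n}$, giving $C_{h}=[0,1]\setminus\cup_{n}X_{n}$ and $D_{h}=\cup_{n}X_{n}$. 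Three bookkeeping observations then make the parenthetical variants almost free: a subset of a height-countable set is height-countable and a union of two such sets is height-countable; any injection, bijection or enumeration yields a height function, so \emph{not} height-countable implies \emph{not} countable, strongly countable, or enumerable; and height-countability is affinely invariant, whence under $\NIN_{\alt}$ no nondegenerate subinterval of $[0,1]$ is height-countable.

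For the forward implications $(a)\Rightarrow(\cdot)$ I would split the items into two groups. The \emph{union type} items (f), and the Volterra statements (b), (c), all reduce to: a hypothesised bad object would write $[0,1]$ as a union of two height-countable sets, contradicting $\NIN_{\alt}$. Concretely, a Volterra pair gives $D_{f}\cup D_{g}=[0,1]$ with both $D_{f},D_{g}$ height-countable; a continuous-on-$\Q$ counterexample gives $[0,1]=C_{f}\cup D_{f}$ with $C_{f}$ countable and $D_{f}$ height-countable; and (f) is the observation that $C_{f}$ and $D_{f}$ cannot both be height-countable. The \emph{continuity-point type} items (e), (g), (h), (i), (j), (k) all follow once one knows $C_{f}$ is dense: indeed, if some subinterval $I$ missed $C_{f}$ then $I\subseteq D_{f}$ would be height-countable, contradicting affine invariance under $\NIN_{\alt}$. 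From density of $C_{f}$ the analytic conclusions are standard facts available in $\ACAo$: for nonnegative regulated $f$ with $\int_{0}^{1}f=0$ one has $f=0$ at every continuity point (else positivity on a neighbourhood forces a positive integral), giving (g) and (h); $F(x)=\int_{0}^{x}f$ is differentiable with $F'=f$ at continuity points, giving (k); at a continuity point a strictly positive value is bounded below on a ball, giving (j); and $C_{f}$ itself serves as the dense set on which $f$ is continuous for Blumberg's theorem (i).

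For the reversals $(\cdot)\Rightarrow(a)$ I would argue by contraposition: assume $\neg\NIN_{\alt}$, so $[0,1]=\cup_{n}L_{n}$ with $L_{n}:=\{x:H(x)<n\}$ finite, and build $h$ from these finite sets, noting $h:[0,1]\di(0,1]$ is strictly positive everywhere with $h(x\pm)=0$. A direct upper-sum estimate shows $\{x:h(x)>\tfrac{1}{2^{n}}\}\subseteq\cup_{m<n}L_{m}$ is finite, whence $\int_{0}^{x}h=0$ for all $x$, all provably in $\ACAo$. Then each item applied to $h$ collapses: (d)/(e) would give a continuity point, i.e.\ a point outside $\cup_{n}L_{n}$; (g),(h) would give a zero of $h$; (k) applied to $F\equiv0$ would give $x_{0}$ with $h(x_{0})=F'(x_{0})=0$; for (j), since $\{h\geq\tfrac{1}{2^{N}}\}$ is finite it is missed inside any ball $B(x,\tfrac{1}{2^{N}})$, so the conclusion fails for every pair $(N,x)$; for Blumberg (i), any dense $D$ has no isolated points, so at each $x\in D$ one gets $\lim_{D\ni y\to x}h(y)=0\neq h(x)$, i.e.\ $h|_{D}$ is nowhere continuous. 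In every case we contradict $[0,1]=\cup_{n}L_{n}$, yielding $\NIN_{\alt}$.

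The main obstacle I anticipate is not the logical skeleton but the case-by-case verification that the \emph{same} witness $h$ legitimately lands in each prescribed hypothesis and that the needed analytic facts hold in the weak base theory: in particular, computing $\int_{0}^{x}h=0$ uniformly in $\ACAo$, checking that $h$ maps into $[0,1]$ or into $(0,1]$ as required by (g)--(h) versus (j), and handling the two atypical reversals, namely the subspace-topology argument for Blumberg (showing $h|_{D}$ is nowhere continuous on a dense $D$) and the finiteness-inside-a-ball argument for (j). A secondary subtlety is upgrading a single continuity point to genuine density in the forward direction, which is where affine invariance of height-countability does the work; once these ingredients are in place the remaining equivalences assemble routinely through the hub $\NIN_{\alt}$.
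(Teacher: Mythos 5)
Your skeleton---$\NIN_{\alt}$ as hub, height-countability of $D_{f}$ for the forward directions, the function $h$ from \eqref{mopi} for the reversals---is exactly the route the paper takes (it is the proof of Theorem \ref{flonks} plus what the paper calls ``minor tricks''), and the reversals you give for items (d), (e), (g), (h), (i), (j), (k) are correct. The gap is that you never prove the reversals for the Volterra items (b) and (c), and you omit (f) from the reversal list as well; moreover (b) and (c) are precisely the items that the \emph{unmodified} witness $h$ cannot handle. Under $\neg\NIN_{\alt}$ your $h$ is strictly positive with vanishing one-sided limits, hence nowhere continuous; but (b) and (c) are non-existence statements whose failure requires exhibiting a regulated function with a prescribed \emph{non-empty} continuity set ($\Q\cap[0,1]$ for (c), a complementary pair of continuity/discontinuity sets for (b)). A nowhere continuous function is not a counterexample to either, so ``each item applied to $h$ collapses'' breaks down exactly there.

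The missing idea is a modification of $h$: under $\neg\NIN_{\alt}$ define $f(x):=h(x)$ for irrational $x\in[0,1]$ and $f(x):=0$ for rational $x$ (definable in $\ACAo$ using $(\exists^{2})$). Since $0\leq f\leq h$, all one-sided limits of $f$ vanish, so $f$ is regulated; $f$ is continuous at every rational (where $f(x)=0$ equals the punctured limit) and discontinuous at every irrational (where $f(x)=h(x)>0$ while the punctured limit is $0$). This $f$ contradicts (c), and the pair $f$ and $g(x):=h(x)-f(x)$ (or $f$ together with Thomae's function, which is regulated and available in $\ACAo$) contradicts (b); that closes the cycle. A similar caveat applies to the parenthetical ``not strongly countable'' variants of (d) and (f): the empty set is height-countable, countable and enumerable, but it is \emph{not} strongly countable (there is no bijection of $\emptyset$ onto $\N$), so $h$ with $C_{h}=\emptyset$, respectively the sequence $(L_{n})_{n\in\N}$ with empty co-union, yields no contradiction for those variants; one must again doctor the witness so that the relevant set becomes an infinite countable set, e.g.\ replace $L_{n}$ by $L_{n}\setminus\Q$ so that the co-union is $\Q\cap[0,1]$, or damp $h$ by a continuous function vanishing exactly on $\{0\}\cup\{2^{-n}:n\in\N\}$ so that $C_{f}$ equals that set. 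These are exactly the ``minor tricks'' the paper alludes to; without them the equivalences for (b), (c) and the strong-countability variants remain unproven.
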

The previous equivalences are robust in that certain restrictions are possible.  
\begin{cor}
The equivalences of Theorem \ref{flonk} still go through if we restrict to any of the following function classes: Baire 1$^{*}$, Baire 2, usco, fragmented, $B$-measurable of first class, symmetrically continuous.  
\end{cor}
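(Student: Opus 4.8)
The plan is to reuse the two-directional argument behind Theorems \ref{flonks} and \ref{flonk}, restricting the regulated functions appearing there to those that additionally belong to the class $\Gamma$ under consideration. The key structural point is that $\Gamma$-membership is needed for the reversal only: the forward implication and the reversal decouple cleanly.

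For the forward direction (that $\NIN_{\alt}$ implies each item), I would observe that a function which is regulated and lies in $\Gamma$ is in particular regulated, so its discontinuity set $D_{f}$ from \eqref{drux} is height-countable by Example \ref{tekken}. Hence the proof of Theorem \ref{flonks}, together with the interval-halving and density refinements already used in Theorem \ref{flonk}, applies verbatim and yields the respective conclusions (the density, non-countability, integral, and $\FTC$ statements). No new idea is required on this side.

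For the reversal (that each item implies $\NIN_{\alt}$), every reversal in Theorem \ref{flonk} is driven by the single test function $h$ of \eqref{mopi}, so the entire task reduces to checking that $h\in\Gamma$. Since $h$ is regulated with $h(x+)=h(x-)=0$ everywhere and $h\geq 0$ (so that $\int_{0}^{1}h(x)dx=0$, matching the hypotheses of the integral items), I would argue as follows. For $\Gamma$ equal to Baire~2, fragmented, $B$-measurable of first class, or Baire~$1^{*}$, membership is immediate from regulated $\subseteq$ Baire~1 together with the coincidence of Baire~1, fragmented and $B$-measurable of first class, and with the portion characterisation of Baire~$1^{*}$ furnished by the height-countable decomposition $D_{f}=\bigcup_{k}D_{k}$ of Example \ref{tekken}. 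Next, $h$ is usco because $\limsup_{y\to x}h(y)=\max\{h(x+),h(x-)\}=0\leq h(x)$, and $h$ is symmetrically continuous because $h(x+t)-h(x-t)\to h(x+)-h(x-)=0$ as $t\to 0^{+}$. Feeding $h$ into the now $\Gamma$-restricted item then forces a point of continuity of $h$, i.e.\ some $x\notin\bigcup_{n}X_{n}$, which is exactly $\NIN_{\alt}$, as in Theorem \ref{flonks}.

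The main obstacle I anticipate is carrying out these $\Gamma$-membership checks inside the weak base theory $\ACAo+\QFAC^{0,1}$ rather than in $\ZFC$: one must verify $h(x\pm)=0$ and the usco inequality using only $(\exists^{2})$, and produce the Baire~$1^{*}$ covering of $[0,1]$ by closed sets carrying continuous restrictions directly from the decomposition of Example \ref{tekken}. The usco and symmetrically continuous cases are the genuinely substantive ones, since---unlike Baire~2 and the Baire~1 variants---these two classes do not contain every regulated function; I would therefore spell out those membership arguments in full and dispatch the remaining classes through the inclusion regulated $\subseteq$ Baire~1.
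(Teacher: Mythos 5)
Your structural reduction is the right one---the forward directions hold \emph{a fortiori} since the restricted items are weaker statements, and every reversal hinges on membership of the test function $h$ from \eqref{mopi} in the class $\Gamma$---and your treatment of usco and symmetric continuity is correct: both follow outright from $h(x+)=h(x-)=0$, the former being part of Theorem \ref{propi}. The genuine gap is your plan to ``dispatch the remaining classes through the inclusion regulated $\subseteq$ Baire~1'' together with the coincidence of Baire~1, fragmented, and $B$-measurable of first class. These are precisely statements that Theorem \ref{temp3} shows to imply $\NIN_{[0,1]}$, hence they are unprovable in $\Z_{2}^{\omega}+\QFAC^{0,1}$, let alone in the base theory $\ACAo+\QFAC^{0,1}$ of the corollary; a reversal invoking them assumes something at least as strong as what it is trying to derive. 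Worse, for Baire~$1^{*}$ the membership claim is false even in $\ZFC$: taking the $X_{n}$ to be singletons enumerating $\Q\cap[0,1]$, the resulting $h$ is a Thomae-type function, and if $[0,1]=\cup_{m\in\N}C_{m}$ were a closed cover with each $h\upharpoonright C_{m}$ continuous, some $C_{m}$ would contain an interval, on which $h$ is discontinuous at every rational. So no amount of care inside the base theory can establish ``$h$ is Baire $1^{*}$'' for an arbitrary sequence of finite sets.

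The repair is the device this paper itself uses (see the proof of Theorem \ref{tempX} and the remark in Section \ref{intro} that $h$ lies in such classes \emph{in case the unit interval is countable}): run the reversal as a proof by contradiction and extract $\Gamma$-membership from the failure of $\NIN_{\alt}$ itself. If $[0,1]=\cup_{n\in\N}X_{n}$ with each $X_{n}$ finite (w.l.o.g.\ increasing), then the finite sets $F_{m}:=\cup_{n\leq m}X_{n}$ are closed, cover $[0,1]$, and $h\upharpoonright F_{m}$ is continuous; hence $h$ is Baire $1^{*}$, and the analogues for Baire~2, fragmented, and $B$-measurable of first class follow in the same spirit as the $f_{n}$-construction in the proof of Theorem \ref{tempX}. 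Applying the $\Gamma$-restricted item to $h$ then yields a continuity point $x_{0}$, whence $h(x_{0})=0$ and $x_{0}\notin\cup_{n\in\N}X_{n}$, contradicting the assumed covering and establishing $\NIN_{\alt}$. Note also that your assessment of which cases are ``genuinely substantive'' is inverted: usco and symmetric continuity are the cases that hold outright, while the Baire-type classes are exactly where the contradiction hypothesis must be exploited.
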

The previous equivalences are conceptually pleasing but one can obtain many more by introducing the following principle.  
\begin{princ}[$\NIN_{\alt}'$]\label{drop}
{The unit interval is not height-width-countable.}
\end{princ}
The previous principle is equivalent to $\NIN_{\alt}$ assuming relatively weak principles in the base theory (see \cite{samBIG4} for a list), three of which are as follows.
\begin{itemize}
\item {For any regulated $f:\R\di \R$, there is continuous $g:\R\di \R$ such that $f\leq g$.}
\item {A regulated function $f:[0,1]\di \R$ has bounded Waterman\footnote{The concept of Waterman variation is a generalisation of $BV$ in which the sums in \eqref{tomb} are weighted via a \emph{Waterman sequence} (see \cite{voordedorst} and Remark \ref{essenti}).} variation}.
\item (\textsf{Helly})
Let $(f_{n})_{n\in \N}$ be a sequence of $[0,1]\di [0,1]$-functions in $BV$ with pointwise limit $f:[0,1]\di [0,1]$ which is not in $BV$.  Then there is unbounded $g\in \N^{\N}$ such that $g(n)\leq V_{0}^{1}(f_{n})\leq g(n)+1$ for all $n\in \N$,
\end{itemize}
%Intuitively, \textsf{Helly} is a rather weak statement that significantly simplifies the RM-study of $\enum$ and $\NIN_{\alt}$.  
Moreover, $\NIN_{\alt}'$ is equivalent over $\ACAo$ to most items in Theorems~\ref{flonks} and \ref{flonk} with `regulated' replaced by `$BV$'.
On top of that, $\NIN_{\alt'}$ is equivalent to basic properties of \emph{unordered sums} and basic convergence theorems for Fourier series and Bernstein polynomials (see \cite{samBIG, samBIG3}). 
In the interest of space, we only briefly sketch the results regarding $BV$.

\smallskip

Finally, there are many function classes between the $BV$ and regulated functions, as discussed in the next remark. 
\begin{rem}[Between bounded variation and regulated]\label{essenti}\rm 
The following spaces are intermediate between $BV$ and regulated; all details may be found in \cite{voordedorst}.  

\smallskip

Wiener spaces from mathematical physics (\cite{wiener1}) are based on \emph{$p$-variation}, which amounts to replacing `$ |f(x_{i})-f(x_{i+1})|$' by `$ |f(x_{i})-f(x_{i+1})|^{p}$' in the definition of variation \eqref{tomb}. 
Young (\cite{youngboung}) generalises this to \emph{$\phi$-variation} which instead involves $\phi( |f(x_{i})-f(x_{i+1})|)$ for so-called Young functions $\phi$, yielding the Wiener-Young spaces.  
Perhaps a simpler construct is the Waterman variation (\cite{waterdragen}), which involves $ \lambda_{i}|f(x_{i})-f(x_{i+1})|$ and where $(\lambda_{n})_{n\in \N}$ is a sequence of reals with nice properties; in contrast to $BV$, any continuous function is included in the Waterman space (\cite{voordedorst}*{Prop.\ 2.23}).  Combining ideas from the above, the \emph{Schramm variation} involves $\phi_{i}( |f(x_{i})-f(x_{i+1})|)$ for a sequence $(\phi_{n})_{n\in \N}$ of well-behaved `gauge' functions (\cite{schrammer}).  
As to generality, the union (resp.\ intersection) of all Schramm spaces yields the space of regulated (resp.\ $BV$) functions, while all other aforementioned spaces are Schramm spaces (\cite{voordedorst}*{Prop.\ 2.43 and 2.46}).
In contrast to $BV$ and the Jordan decomposition theorem, these generalised notions of variation have no known `nice' decomposition theorem.  The notion of \emph{Korenblum variation} (\cite{koren}) does have such a theorem (see \cite{voordedorst}*{Prop.\ 2.68}) and involves a distortion function acting on the \emph{partition}, not on the function values (see \cite{voordedorst}*{Def.\ 2.60}).  
%The same holds for Watermann spaces, Young spaces, and Schramm spaces (see in particular \cite{voordedorst}*{Prop.\ 2.43 and 2.46}).  The $\Omega$-cluster contains functionals defined using `regulated' replaced with any of those. 
\end{rem}
It is no exaggeration to say that there are \emph{many} natural spaces between the regulated and $BV$-functions, all of which yield equivalences for the above.

\subsubsection{The enumeration principle}\label{Esec}
We discuss the RM of the enumeration principle, formulated as follows, bearing in mind Section \ref{horsies}.
This principle is `explosive' in the sense of Theorem \ref{sedr}.  
\begin{princ}[$\enum$]
A height-countable set in $[0,1]$ can be enumerated.   
\end{princ}
Many equivalences for $\enum$ are obtained in \cites{dagsamXI, samBIG, samBIG3, samBIG4}, including the \emph{Jordan decomposition theorem}, which we introduce next. 
Now, the notion of {bounded variation} was first explicitly\footnote{Lakatos in \cite{laktose}*{p.\ 148} claims that Jordan did not invent or introduce the notion of bounded variation in \cite{jordel}, but rather discovered it in Dirichlet's 1829 paper \cite{didi3}.} introduced by Jordan around 1881 (\cite{jordel}) yielding a generalisation of Dirichlet's convergence theorems for Fourier series.  
Indeed, Dirichlet's convergence results are restricted to functions that are continuous except at a finite number of points, while $BV$-functions can have (at most) countable many points of discontinuity, as already studied by Jordan, namely in \cite{jordel}*{p.\ 230}.
We shall make use of the usual definition, as in Example \ref{tekken2}.
Many variations of the Jordan decomposition theorem (Theorem \ref{drd}) are equivalent to $\enum$, some of which are as follows. 
\begin{thm}[$\ACAo+\QFAC^{0,1}$]\label{tach} 
The following are equivalent.
\begin{enumerate}
\renewcommand{\theenumi}{\alph{enumi}}
\item The enumeration principle $\enum$.\label{LA}
\item A non-enumerable and closed set in $\R$ has a limit point.  
\item For regulated $f:[0,1]\di \R$, the set $D_{f}$ is enumerable.\label{LB}
\item For regulated $f:[0,1]\di \R$ and $p, q\in [0,1]\cap \Q$, $\sup_{x\in [p,q]}f(x)$ exists\footnote{To be absolutely clear, we assume the existence of a `supremum operator' $\Phi:\Q^{2}\di \R$ such that $\Phi(p, q)=\sup_{x\in [p, q]}f(x)$ for all $p, q\in [0,1]\cap \Q$.  For Baire 1 functions, this kind of operator exists in $\ACAo$ by \cite{dagsamXIV}*{\S2}, even for irrational intervals.}.\label{LBQ}  
\item For regulated and pointwise discontinuous $f:[0,1]\di \R$, $D_{f}$ is enumerable.\label{LBP}  
\item For regulated $f:[0,1]\di \R$, there is $(x_{n})_{n\in \N}$ enumerating $[0,1]\setminus B_{f}$, where 
\be\label{BF}
B_{f}:=\{ x\in [0,1]: f(x)=\lim_{n\di \infty} B_{n}(f, x)\}
\ee
and $B_{n}(f)$ is the $n$-th Bernstein polynomial.\label{LC}  
\item \(Jordan\) For $f:\R\di \R$ which is in $BV([0, a])$ for all $a>0$, there are monotone $g, h:\R\di \R$ such that $f(x)=g(x)-h(x)$ for $x\geq 0$.\label{LD}
\item The combination of: \label{LE}
\begin{itemize}
\item[(g.1)] Helly's selection theorem as formulated just below Principle \ref{drop}.
%\item[(f.1')] for a sequence $(f_{n})_{n\in \N}$ of $BV$-functions on $[0,1]$, there is $g$ such that $V_{0}^{1}(f_{n})\leq g(n)$ for all $n\in \N$,
\item[(g.2)] \(Jordan\) For $f:[0,1]\di \R$ in $BV$, there are non-decreasing $g, h:\R\di \R$ such that $f(x)=g(x)-h(x)$ for $x\in [0,1]$.
\end{itemize}
%of reals $(x_{n})_{n\in \N}$ such that $(\forall n\in \N)(x\ne x_{n}  \di f(x)=\lim_{n\di \infty} B_{n}(f, x))$.\label{LC}
\item The combination of: \label{LEZZ}
\begin{itemize}
\item[(h.1)] Helly's selection theorem as formulated just below Principle \ref{drop}.
%\item[(f.1')] for a sequence $(f_{n})_{n\in \N}$ of $BV$-functions on $[0,1]$, there is $g$ such that $V_{0}^{1}(f_{n})\leq g(n)$ for all $n\in \N$,
\item[(h.2)] The principle $\enum'$: any height-width countable set is enumerable. 
\end{itemize}
%of reals $(x_{n})_{n\in \N}$ such that $(\forall n\in \N)(x\ne x_{n}  \di f(x)=\lim_{n\di \infty} B_{n}(f, x))$.\label{LC}
\end{enumerate}
\end{thm}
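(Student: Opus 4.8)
The plan is to take the enumeration principle $\enum$ (item (a)) as the hub and show it implies each of (b)--(i), and conversely that each of (b)--(i) implies it. Throughout one may use that $\ACAo+\QFAC^{0,1}$ contains $(\exists^{2})$ (by Remark \ref{LEM}), so arithmetical comprehension is available and $D_{f}$ is given by a genuine characteristic function as in Example \ref{tekken}. The forward direction rests on a single observation: every analytic object in (b)--(i) carries a \emph{height-countable} exceptional set which $\enum$ upgrades to an enumeration, after which a textbook argument finishes. Indeed, for regulated $f$ the set $D_{f}$ is height-countable (Example \ref{tekken}), so $\enum$ yields (c) and its restriction (e); feeding the enumerated jumps into the supremum computation — combining the sup of the arithmetically-determined one-sided limits with the sup over the enumerated jump values — gives (d). For (f) one checks $[0,1]\setminus B_{f}\subseteq D_{f}$, since Bernstein polynomials converge to $f$ at continuity points (and to the average of the one-sided limits at a regulated jump), so $[0,1]\setminus B_{f}$ is height-countable and $\enum$ enumerates it. Item (b) is the contrapositive remark that a closed set in $\R$ with no limit point meets each $[-n,n]$ in a finite set (Bolzano--Weierstrass, available in $\ACAo$), hence is height-countable and so enumerable by $\enum$. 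The Jordan items (g.2), Helly, and (g) then follow by enumerating $D_{f}$ and running the standard construction exactly as in the proof of Theorem \ref{XZ}.

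For the reversals I would invoke Template \ref{tempy}: from a sequence $(X_{n})_{n\in\N}$ of finite sets — equivalently, from a height-countable $A$ via its finite level sets $A_{n}=\{x\in A: H(x)<n\}$ — form the function $h$ of \eqref{mopi}. As in the proof of Theorem \ref{flonks}, $h$ is regulated with $h(x+)=h(x-)=0$ everywhere, so that $D_{h}=\cup_{n}X_{n}$; hence (c) applied to $h$ enumerates $\cup_{n}X_{n}$, which is $\enum$, and the restricted form (e) follows by the same $h$. The same $h$ handles (f), since its Bernstein limit is $0$ while $h\ne 0$ precisely on $\cup_{n}X_{n}$, whence $[0,1]\setminus B_{h}=\cup_{n}X_{n}$. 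For (d) one instead uses the supremum operator for $h$ together with the interval-halving technique from the proof of Theorem \ref{temp3} to extract, bit by bit, the binary expansions of the points of $\cup_{n}X_{n}$, again producing an enumeration; this route is clean since it needs no density hypothesis. The reversal (b)$\Rightarrow$(a) I would obtain dually, by realising a height-countable $A\subseteq[0,1]$ as a locally finite, hence closed, subset of $\R$ whose enumeration returns that of $A$.

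The step I expect to be the main obstacle is the bounded-variation cluster (g), (h), (i). Here the function $h$ of \eqref{mopi} lies in $BV$ only when $\sum_{n}|X_{n}|/2^{n}<\infty$, i.e.\ only for \emph{height-width}-countable inputs; this is exactly why (h) and (i) pair the Jordan statement with Helly's selection theorem and why (g) is stated on all of $\R$. The strategy is therefore two-layered: the $BV$-version of $h$, decomposed by the Jordan theorem into monotone pieces, reverses only to $\enum'$ (enumeration of height-width-countable sets), and one must then upgrade $\enum'$ to $\enum$. Helly's theorem supplies the missing ingredient, as its unbounded $g\in\N^{\N}$ bounding the $V_{0}^{1}(f_{n})$ detects the variation blow-up forced by a height-countable set of unbounded width; in (g) the freedom to have $f\in BV([0,a])$ for every $a$ while the total variation over $[0,\infty)$ is infinite plays the identical role once the successive level sets are spread towards infinity. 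The genuinely delicate verification is that the monotone functions produced by Jordan decomposition let one \emph{locate}, not merely count, the jumps — so that interval-halving against the total-variation function $V_{0}^{x}(f)$ yields an actual enumeration rather than a cardinality bound — and it is here that one leans on $\QFAC^{0,1}$ and on the equivalences already secured for $\NIN_{\alt}$ and for the regulated case.
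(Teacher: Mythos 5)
Your overall architecture is the same as the paper's: $\enum$ as the hub, the forward directions obtained because $D_{f}$ is height-countable for regulated $f$ (so that suprema and variations can be computed from suprema over $\Q$ together with the enumerated discontinuities), and the reversals obtained by applying each item to the function $h$ of \eqref{mopi}, with the Helly principle bridging $\enum'$ and $\enum$ for the $BV$ items exactly as the pairing in items \eqref{LE} and \eqref{LEZZ} suggests. The paper's own proof is a two-paragraph sketch of precisely this scheme, so most of your proposal just fills in the intended details.

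There is, however, one genuine gap: your claim that ``the restricted form (e) follows by the same $h$''. To apply item \eqref{LBP} to $h$ you must verify its hypothesis, namely that $h$ is \emph{pointwise discontinuous}, i.e.\ that $C_{h}=[0,1]\setminus \cup_{n\in \N}X_{n}$ is dense. For an arbitrary sequence of finite sets this is exactly item \eqref{lopi} of Theorem \ref{flonk}, which is \emph{equivalent} to $\NIN_{\alt}$ over $\ACAo+\QFAC^{0,1}$; since $\NIN_{\alt}$ implies $\NIN_{[0,1]}$, it is unprovable even in $\Z_{2}^{\omega}+\QFAC^{0,1}$ by Theorem \ref{temp3}, hence certainly not available in the base theory. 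Trying to first establish the density would be circular, as it requires $\NIN_{\alt}$ (or $\enum$), which is what you are deriving; you in fact notice the problem when you call route (d) ``clean since it needs no density hypothesis'', but item (e) is then left unrepaired. The standard repair is to embed the data into a nowhere dense set before building $h$: push the sets $X_{n}$ into the Cantor set $C$ via the binary-to-ternary homeomorphism $\phi$ (available given $(\exists^{2})$), and form $h$ from the finite sets $\phi(X_{n})$. This $h$ is regulated and its continuity points contain the dense open set $[0,1]\setminus C$, so it is \emph{provably} pointwise discontinuous; item \eqref{LBP} then enumerates $D_{h}=\cup_{n\in \N}\phi(X_{n})$, and composing with $\phi^{-1}$ enumerates $\cup_{n\in \N}X_{n}$. (Alternatively, one can case-split on whether $[0,1]\setminus\cup_{n\in \N}X_{n}$ is dense and refute the negative case using the same embedding together with Cantor's theorem, which is provable in $\RCA_{0}$.) With this correction, and modulo the routine verifications you already flag (locating the jumps of the monotone Jordan pieces, which works in $\ACAo$ since for monotone $g$ each jump is pinned down by a rational $r$ via $\sup\{p\in \Q: g(p)<r\}$), your proposal goes through.
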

\begin{proof}
First of all, $\enum$ implies item \eqref{LB} as $D_{f}$ is height-countable for regulated $f:[0,1]\di \R$.  
The enumeration of $D_{f}$ then yields most other items as e.g.\ the suprema in \eqref{tomb} and $\sup_{x\in [p, q]}f(x)$ can be replaced
by suprema over $\Q$ and the enumeration of $D_{f}$.  

\smallskip

Secondly, the function $h:[0,1]\di \R$ as in \eqref{mopi} is regulated if the underlying sets $X_{n}$ are finite.  
An enumeration of $D_{h}$ readily provides an enumeration of $\cup_{n\in \N}X_{n}$, i.e.\ $\enum$ is thus obtained from item \eqref{LB}. 
\end{proof}
\begin{cor}
One can replace `monotone' in items \eqref{LD} or \eqref{LE} by:
\begin{itemize}
\item $U_{0}$-function, or:
\item regulated $f:[0,1]\di \R$ such that for all $x\in (0,1)$, we have
\be\label{fereng}\textstyle
|f(x)-\lim_{n\di \infty} B_{n}(f, x)|\leq |\frac{f(x+)-f(x-)}{2}|. 
\ee
%$\min(k(x+), k(x-))\leq k(x)\leq \max(k(x+), k(x-))$.
\end{itemize}
\end{cor}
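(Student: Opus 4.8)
The plan is to check both directions of the equivalences in items \eqref{LD} and \eqref{LE} with \emph{monotone} replaced by either indicated class, using Theorem \ref{tach} itself as the backbone; the forward direction is essentially free, and the reversal carries the content. First I would observe that every non-decreasing function lies in both replacement classes. Using the classical limit identity $\lim_{n\to\infty}B_n(g,x)=\frac{g(x+)+g(x-)}{2}$ at interior points of a regulated function---available in the base theory and already underlying \eqref{BF} and item \eqref{LC}---a non-decreasing $g$, which satisfies $g(x-)\le g(x)\le g(x+)$, fulfils \eqref{fereng}; the same betweenness places it in the $U_0$ class. Hence the non-decreasing decomposition produced by $\enum$ through items \eqref{LD}/\eqref{LE} is already a decomposition into the weaker class, so the forward implication is settled with no further work.

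For the reversal I would feed the modified decomposition the function $h$ from \eqref{mopi} with the $X_n$ finite (and, for the $BV$-items, chosen height-width-countable so that $h\in BV$). Writing $h=g_1-g_2$ with $g_1,g_2$ in the weaker class and using $h(x+)=h(x-)=0$, one obtains $g_1(x\pm)=g_2(x\pm)$, so the betweenness condition forces both $g_1(x)$ and $g_2(x)$ into the closed interval with endpoints $g_1(x-)$ and $g_1(x+)$; consequently $|h(x)|=|g_1(x)-g_2(x)|\le|g_1(x+)-g_1(x-)|$. Thus at each $x\in\cup_nX_n$, where $h(x)\ne 0$, the piece $g_1$ must satisfy $g_1(x+)\ne g_1(x-)$, i.e.\ $x$ is a genuine jump discontinuity of $g_1$. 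I would then invoke \cite{dagsamXIV}*{Theorem 2.16}, by which the jump discontinuities of the regulated function $g_1$ can be enumerated (indeed already in $\RCAo$), and filter this sequence by the condition $h(x)\ne 0$, decidable via $(\exists^{2})$. Since $h(x)\ne 0$ holds exactly on $\cup_nX_n$ and every such point is a jump discontinuity of $g_1$, the filtered sequence enumerates $\cup_nX_n=D_h$ precisely, which is $\enum$ (resp.\ $\enum'$ together with Helly's selection theorem for item \eqref{LE}). This mirrors the reversal already carried out for item \eqref{LB} in the proof of Theorem \ref{tach}.

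The main obstacle is the single observation driving the reversal: that the normalisation built into the weaker class turns the \emph{removable} discontinuities of $h$---points where $h(x)\ne h(x+)=h(x-)=0$---into \emph{jump} discontinuities of the decomposition pieces, which are the discontinuities one can actually enumerate. Establishing this rests on the Bernstein-limit identity linking \eqref{fereng} to the betweenness property, and on checking that the $U_0$ condition expresses the same betweenness, so that both replacements are handled uniformly. The remaining care is bookkeeping: ensuring the reversal function is genuinely of bounded variation for items \eqref{LD} and \eqref{LE}, and invoking Helly's selection theorem exactly in the form used in item \eqref{LE}.
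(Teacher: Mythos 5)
Your core mechanism is the right one, and it matches what the paper's surrounding machinery points to (the corollary is stated without proof): monotone pieces lie in both replacement classes via the classical identity $\lim_{n\di\infty}B_{n}(g,x)=\frac{g(x+)+g(x-)}{2}$, so the forward direction is inherited from Theorem \ref{tach}; and in the reversal, the betweenness encoded in \eqref{zolk} and \eqref{fereng} converts the \emph{removable} discontinuities of $h$ from \eqref{mopi} into genuine \emph{jump} discontinuities of the pieces $g_{1},g_{2}$, which \cite{dagsamXIV}*{Theorem 2.16} can enumerate; filtering by `$h(x)\ne 0$' via $(\exists^{2})$ then enumerates $\cup_{n}X_{n}$. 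Your inequality $|h(x)|\le|g_{1}(x+)-g_{1}(x-)|$ is correct and is exactly the normalisation observation that makes the corollary true.

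There is, however, a genuine gap in your treatment of item \eqref{LD}. You propose to keep $h\in BV$ by taking the $X_{n}$ height-width-countable, but then your reversal only enumerates height-width-countable sets, i.e.\ it proves $\enum'$ rather than $\enum$; and unlike item \eqref{LE}, item \eqref{LD} contains no Helly component with which to bridge that difference (item \eqref{LEZZ} of Theorem \ref{tach} indicates that Helly's theorem is precisely this bridge, so it cannot simply be waved away). Hence your concluding claim that the filtered sequence ``is $\enum$'' for item \eqref{LD} does not follow from the construction you describe. The repair uses the specific formulation of item \eqref{LD}: the function there is defined on $\R$ and only assumed to be in $BV([0,a])$ for \emph{every} $a>0$. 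Given an arbitrary height-countable $A=\cup_{n}A_{n}\subset[0,1]$ with each $A_{n}$ finite, place a translated copy of the spike function of $A_{n}$ on $[n,n+1]$; any compact interval $[0,a]$ meets only finitely many of these blocks, each generated by a finite set, so the resulting function is in $BV([0,a])$ for all $a>0$ with no width bounds whatsoever. Applying the modified item \eqref{LD} to this function and running your jump-enumeration-plus-filtering argument on each block $[n,n+1]$ yields an enumeration of all of $A$, i.e.\ full $\enum$. With this replacement (your handling of item \eqref{LE}, namely obtaining $\enum'$ and combining it with the Helly component (g.1), is correct as stated), the proof goes through.
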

As discussed in \cite{samBIG3, samBIG4}, a lot of variations of the previous equivalences are possible, including the restriction to Riemann integrable functions or the convergence of Fourier series or Hermit-Fejer polynomials. 

\smallskip

Next, the previous equivalences involve theorems of real analysis that do not mention countable sets. Of course, $\enum$ is equivalent to basic properties of height-countable sets, a selection of which is as follows.  
\begin{itemize}
\item A height-countable set in the unit interval has a supremum.
\item A height-countable linear ordering $(X, \preceq_{X})$ for $X\subset \R$ is order-isomorphic to a subset of $\Q$. 
\item A height-countable and dense linear ordering without endpoints $(X, \preceq_{X})$ for $X\subset \R$ is order-isomorphic to $\Q$. 
\item For height-countable well-orders {$(X, \preceq_{X}) $ and $ (Y, \preceq_{Y})$} where $X, Y\subset \R$, the former order is order-isomorphic to the latter order or an initial segment of the latter order, or vice versa.
\end{itemize}
Unfortunately, the notion of height-countability was only identified after the completion of \cite{dagsamXI}, i.e.\ the latter is formulated using injections to $\N$, which causes some additional technical overhead. 

\smallskip

Finally, the principle $\enum$ is \emph{explosive} in that combining it with certain comprehension functionals yields much stronger comprehension axioms.
The results in \cite{kruisje} suggest that $\ACAo+\NIN_{\alt}$ is conservative over $\ACA_{0}$.
\begin{thm}\label{sedr}~
\begin{itemize}
\item The system $\ACAo+\enum$ proves $\ATR_{0}$.
\item The system\footnote{The system $\FIVE^{\omega}$ is $\RCAo+(\SS^{2})$, which is $\Pi_{3}^{1}$-conservative over $\FIVE$ (\cite{yamayamaharehare}) and where
\be\tag{$\SS^{2}$}
(\exists\SS^{2}:\N^{\N}\di \{0,1\})(\forall f \in \N^{\N})\big[  (\exists g \in \N^{\N})(\forall n\in \N)(f(\overline{g}n)=0)\asa \SS(f)=0  \big].
\ee
The functional $\SS^{2}$ is also called the \emph{Suslin} functional.} $\FIVE^{\omega}+\enum$ proves $\SIX$.
\end{itemize}
\end{thm}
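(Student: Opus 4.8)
The plan is to prove both parts by the same device: use the comprehension functional available in the base theory — Kleene's $(\exists^2)$ for the first part, the Suslin functional $(\SS^2)$ for the second — to manufacture a \emph{height-countable} set whose enumeration, supplied by $\enum$, encodes the comprehension instance one is after. Concretely, I would reduce each part to the ``at most one'' comprehension principle from the $\ATR_0$-list above and its Suslin-relativisation, the uniqueness hypothesis being precisely what forces height-countability.

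For the first part, I would target the principle that $\{n\in\N:(\exists X\subseteq\N)\varphi(X,n)\}$ exists whenever $\varphi$ is arithmetical with $(\forall n\in\N)(\exists\text{ at most one }X)\varphi(X,n)$, which by the stated equivalence yields $\ATR_0$. Fix such a $\varphi$ and code each pair $(n,X)$ by a real $z_{n,X}\in[0,1]$, using the binary expansion of $z$ to recover the characteristic function $X$ and a fixed pairing to recover $n$. Let $A\subseteq[0,1]$ be the set of those $z_{n,X}$ with $\varphi(X,n)$; since $\varphi$ has only number quantifiers once $X$ is a parameter, $(\exists^2)$ decides membership, so $A$ is a bona fide set in the sense of Definition~\ref{charz}. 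The at-most-one hypothesis makes $A_m:=\{z\in A:\text{the first coordinate of }z\text{ is}<m\}$ contain at most $m$ distinct reals, so $H(z_{n,X}):=n$ (and $H:=0$ off the codes), again definable from $(\exists^2)$, is a height function; hence $A$ is height-countable. Applying $\enum$ gives a sequence $(w_k)_{k\in\N}$ enumerating $A$, and then $\{n:(\exists X)\varphi(X,n)\}=\{n:(\exists k)(\text{the first coordinate of }w_k\text{ equals }n)\}$ exists by arithmetical comprehension in $(w_k)_{k\in\N}$. This is the desired principle, and $\ATR_0$ follows.

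For the second part, I would run the identical construction with $(\exists^2)$ replaced by $(\SS^2)$, which is present in $\FIVE^\omega$ and decides $\Sigma^1_1$- and $\Pi^1_1$-formulas. Starting from a $\Pi^1_1$ matrix $\pi(X,n)$ with $(\forall n)(\exists\text{ at most one }X)\pi(X,n)$, the set $A$ of codes of pairs $(n,X)$ with $\pi(X,n)$ is now definable from $(\SS^2)$, is height-countable by the same uniqueness argument (with the height function again obtained from $(\SS^2)$), and $\enum$ enumerates it; arithmetical comprehension in the enumeration then produces the $\Sigma^1_2$-domain $\{n:(\exists X)\pi(X,n)\}$.

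The hard part is the final calibration. The naive relativisation just described delivers, at the second level, a unique-$\Sigma^1_2$-comprehension principle, and the \emph{first}-level analogue of that principle lands on $\ATR_0$, which is strictly below $\FIVE$; so a direct transcription threatens to reach only a $\Pi^1_1$-transfinite-recursion / $\Sigma^1_2$-separation fragment sitting strictly between $\FIVE$ and $\SIX$, rather than full $\Pi^1_2$-\textsf{CA}$_0$. Closing this gap — showing that over $\FIVE^\omega$ the enumerated witnesses can be marshalled into genuine $\Pi^1_2$-comprehension — is where the real work lies, and I would import the requisite combinatorial and proof-theoretic analysis from \cite{kruisje} and \cites{samBIG, dagsamXI}. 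Two subsidiary points also need attention: the characteristic function of $A$ and its height function must be \emph{provably} definable from the relevant functional (which is exactly why the base theories are $\ACAo$ and $\FIVE^\omega$ rather than $\RCAo$), and enough induction — $\Sigma^1_1\textsf{-IND}$ for the first part and its $\Sigma^1_2$ analogue for the second — must be available, as signalled by the induction caveat accompanying the $\ATR_0$-list.
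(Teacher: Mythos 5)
Your first item is essentially the paper's own argument: code the witnesses as reals, note that the at-most-one hypothesis makes the witness set height-countable, enumerate it with $\enum$, and conclude via the `unique existence' characterisation of $\ATR_{0}$ (\cite{simpson2}*{V.5.2}). That part stands (and the induction worry is unnecessary: Simpson's equivalence is already over $\RCA_{0}$).

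For the second item there is a genuine gap, and you have located it exactly but missed the idea that closes it. The missing ingredient is not ``combinatorial and proof-theoretic analysis'' from \cite{kruisje} (a conservation result, which cannot supply comprehension); it is a single classical fact: $\FIVE$ proves $\Pi^{1}_{1}$ \emph{uniformisation} (Kondo's theorem, \cite{simpson2}*{VI.2.1}). Given an arbitrary $\Pi^{1}_{1}$ formula $\pi(X,n)$, uniformisation produces a $\Pi^{1}_{1}$ formula $\pi^{*}(X,n)$ such that $\pi^{*}$ implies $\pi$, each $n$ has \emph{at most one} $X$ with $\pi^{*}(X,n)$, and $(\exists X)\pi(X,n)\asa(\exists X)\pi^{*}(X,n)$ for every $n$. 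Applying your construction to $\pi^{*}$ (membership and the height function decided by $\SS^{2}$, the enumeration supplied by $\enum$) therefore yields the full $\Sigma^{1}_{2}$ set $\{n:(\exists X)\pi(X,n)\}$, i.e.\ full $\Pi^{1}_{2}$-comprehension; nothing ``strictly between $\FIVE$ and $\SIX$'' survives. The analogy with the first level that drove your caution breaks down precisely here: uniformisation holds for $\Pi^{1}_{1}$ matrices but fails for arithmetical ones, which is why the first item tops out at $\ATR_{0}$ while the second reaches $\SIX$. With that one observation inserted, your ``identical construction'' is the paper's proof.
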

\begin{proof}
The first item is straightforward in light of the well-known `unique existence' version of $\ATR_{0}$ from \cite{simpson2}*{V.5.2}.
The second item follows in the same way since $\FIVE$ proves uniformisation for $\Pi_{1}^{1}$-formulas (\cite{simpson2}*{VI.2.1}).
\end{proof}
Regarding the second item, at least from the point of ordinal analysis, $\SIX$ is said to be much stronger than $\FIVE$ (\cite{rathjenICM, loefenlei}).

\subsubsection{An aside: hyperarithmetical analysis}\label{HYPsec}
We have observed that $\NIN_{\alt}$ and $\enum$ boast many equivalences, i.e.\ constitute new `Big' systems.  
These principles are fundamentally based on \emph{height functions} while with more technical overhead, the same equivalences 
could be obtained using \emph{injections to $\N$}, which is actually done in \cite{dagsamXI}.  The question remains what
the role of strongly countable sets and bijections to $\N$ is.  We provide a brief answer to this question in this section.  

\smallskip

First of all, the term \emph{hyperarithmetical analysis} refers to a cluster of logical systems just beyond $\ACAo$, including $\SAC$ and $\WSAC$ (see e.g.\ \cite{simpson2, monta2} for definitions). 
A logical system that is sandwiched between two systems of hyperarithmetical analysis, is also a system of hyperarithmetical analysis.  Now, $\ACAo+\QFAC^{0,1}$ is conservative over $\SAC$ by \cite{hunterphd}*{Cor.\ 2.6}.  
Hence, it seems reasonable to say that \emph{a higher-order system $T$ inhabits the range of hyperarithmetical analysis} in case $T$ satisfies the following:
\be\label{zoiu}
\ACAo+\QFAC^{0,1}\di T\di \WSAC.
\ee
Secondly, $\ACAo+\enum''$ exists in the range of hyperarithmetical analysis. 
\begin{princ}[$\enum''$]
Any strongly countable set $A\subset [0,1]$ can be enumerated.
\end{princ}
A number of equivalences and related results can be found in \cites{dagsamXI, samHYP, samcount}.  In particular, basic properties of (Lipschitz) continuous functions on compact metric spaces, $BV$-functions, and K\"onig's (original) infinity lemma are shown to inhabit the range of hyperarithmetical analysis, i.e.\ they behave like $T$ in \eqref{zoiu}.

\smallskip

In conclusion, height functions and `injections to $\N$' play a central role in the RM of the uncountability of $\R$ and the Jordan decomposition theorem.  
The stronger notion of `bijections to $\N$' yields interesting principles that inhabit the range of hyperarithmetical analysis as in \eqref{zoiu}.

\subsection{Measure and category}
\subsubsection{Introduction}
In this section, we discuss the RM-equivalences for two new `Big' systems, namely the \emph{Baire category theorem} (Section \ref{BCTsec}) and \emph{Tao's pigeon hole principle for the Lebesgue measure} (Section \ref{Taosec}).
This RM-study involves basic properties of usco and cliquish functions. 
While measure and category are fundamentally different, the proofs are often surprisingly similar.

\smallskip

First of all, the following definitions are essential.  
\bdefi~
\begin{itemize}
\item A set $A\subset \R$ is \emph{dense} in $B\subset \R$ if $(\forall k\in \N,b\in B)(\exists a\in A)(|a-b|<\frac{1}{2^{k}})$.
%\item A function $f:\R\di \R$ is \emph{pointwise discontinuous} if for any $x\in \R, k\in \N$ there is $y\in B(x, \frac{1}{2^{k}})$ such that $f$ is continuous at $y$.
\item A set $A\subset \R$ is \emph{nowhere dense} in $B\subset \R$ if $A$ is not dense in any open sub-interval of $B$.  
\item A set $A\subset \R$ is \emph{measure zero} if for any $\eps>0$ there is a sequence of open intervals $(I_{n})_{n\in \N}$ such that $\cup_{n\in \N}I_{n}$ covers $A$ and $\eps>\sum_{n=0}^{\infty}|I_{n}|$. 
\end{itemize}
\edefi
Secondly, we recall that the sets $C_{f}$ and $D_{f}$ play a central role in real analysis while the definition of continuity involves quantifiers over $\R$.  
For regulated functions, this problem was solved `by definition', namely using \eqref{drux}.
Nonetheless, for non-regulated functions, the sets $C_{f}$ and $D_{f}$ can generally not be defined in fairly strong logical systems.  
A rather elegant solution is provided by \emph{oscillation functions} as in Definition \ref{oscfn}.  We note that Ascoli, Riemann, and Hankel already considered the notion of oscillation in the context of Riemann integration (\cites{hankelwoot, rieal, ascoli1}).  
\bdefi[Oscillation functions]\label{oscfn}
For any $f:\R\di \R$, the associated \emph{oscillation functions} are defined as follows: $\osc_{f}([a,b]):= \sup _{{x\in [a,b]}}f(x)-\inf _{{x\in [a,b]}}f(x)$ and $\osc_{f}(x):=\lim _{k \di \infty }\osc_{f}(B(x, \frac{1}{2^{k}}) ).$
\edefi
We  stress that $\osc_{f}:\R\di \R$ is \textbf{only}\footnote{To be absolutely clear, the notation `$\osc_{f}$' and the appearance of $f$ therein in particular, is purely symbolic and does not include lambda abstraction involving `$\lambda f$'.} a third-order function, as clearly indicated by its type.   On a related technical note, while the suprema, infima, and limits in Definition~\ref{oscfn} do not always exist in weak systems, formulas like $\osc_{f}(x)>y$ \emph{always} make sense as shorthand 
 for the standard definition of the suprema, infima, and limits involved; this `virtual' or `comparative' meaning is part and parcel of (second-order) RM in light of \cite{simpson2}*{X.1}.

\smallskip

Finally, $h:[0,1]\di \R$ from \eqref{mopi} has very nice properties proved in \cite{samBIG2}*{\S1.3.4}.  
\begin{thm}[$\ACAo$]\label{propi} Let $(X_{n})_{n\in \N}$ be an increasing sequence of closed sets.
\begin{itemize} 
\item The function $h:[0,1]\di \R$ from \eqref{mopi} is usco and cliquish.  
\item If each $X_{n}$ is also nowhere dense, then $h:[0,1]\di \R$ from \eqref{mopi} is its own oscillation function. 
\end{itemize}
\end{thm}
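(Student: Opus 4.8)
The plan is to exploit the layered structure of the function $h$ from \eqref{mopi}. Since $(X_{n})_{n\in\N}$ is increasing, the least-index clause forces $h$ to be constant equal to $\frac{1}{2^{n}}$ on each layer $X_{n}\setminus X_{n-1}$, and consequently the superlevel sets satisfy $\{x\in[0,1]:h(x)\geq \frac{1}{2^{n}}\}=X_{n}$ for every $n$; I would record this identity first, as it drives both items. For a real $\alpha>0$ one then has $\{x:h(x)\geq\alpha\}=X_{m}$, where $m$ is the largest index with $\frac{1}{2^{m}}\geq\alpha$ (computable from $\alpha$), so every superlevel set of $h$ is one of the given closed sets. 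Upper semicontinuity is then immediate: fix $x_{0}$ and $\alpha>h(x_{0})$; since $\{x:h(x)\geq\alpha\}=X_{m}$ and $h(x_{0})<\alpha$, we have $x_{0}\notin X_{m}$, and as $X_{m}$ is closed its complement is open, so Definition \ref{charz} hands us a $k$ with $B(x_{0},\frac{1}{2^{k}})\subset\R\setminus X_{m}$, i.e.\ $h<\alpha$ on that ball. This is exactly usco at $x_{0}$, and is available in $\ACAo$ since $(\exists^{2})$ lets us treat $h$ and the $X_{n}$ as genuine third-order and closed objects.

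For cliquishness I would, given an open interval $I$ and $\eps>0$, produce an open subinterval $J\subseteq I$ with $\osc_{h}(J)<\eps$ by a finite downward recursion. Pick $m$ with $\{h\geq\eps\}=X_{m}$ and split on whether $X_{m}$ is dense in $I$. If not, the failure of density yields a ball inside $I$ missing $X_{m}$, on which $h<\eps$, so $J$ is found. If $X_{m}$ is dense in $I$, then (being closed) $X_{m}\supseteq I$, so $h\geq\frac{1}{2^{m}}$ throughout $I$; now repeat the dichotomy with $X_{m-1}$, where a subinterval missing $X_{m-1}$ carries $\frac{1}{2^{m}}\leq h<\frac{1}{2^{m-1}}$, forcing $h\equiv\frac{1}{2^{m}}$ and oscillation $0$, while density again yields $X_{m-1}\supseteq I$ and another step. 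The recursion terminates after at most $m+1$ steps, in the worst case at $X_{0}\supseteq I$ where $h\equiv1$; every exit branch delivers a subinterval of oscillation $<\eps$. This uses only classical case-splitting (no effective decision of density is required), so it goes through in $\ACAo$, and it does not yet invoke nowhere density.

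For the second item I would compute $\osc_{h}(x)$ through $\osc_{h}(B(x,\frac{1}{2^{k}}))=\sup_{B}h-\inf_{B}h$, reading all suprema and infima in the virtual sense sanctioned after Definition \ref{oscfn}. The usco bound gives $\lim_{k}\sup_{B(x,1/2^{k})}h=h(x)$, since the supremum is $\geq h(x)$ as $x$ lies in the ball and $\leq\alpha$ eventually for every $\alpha>h(x)$ by the first item. The crucial point is that $\inf_{B(x,1/2^{k})}h=0$ for every $k$: for each $N$, nowhere density of the single set $X_{N}$ means $X_{N}$ is not dense in the subinterval $B(x,\frac{1}{2^{k}})$, so some sub-ball avoids $X_{N}$, and any point $y$ there satisfies $y\notin X_{N}$, whence $h(y)\leq\frac{1}{2^{N+1}}$ (here increasingness is used again); letting $N$ grow gives infimum $0$. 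Combining, $\osc_{h}(x)=h(x)-0=h(x)$.

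The main obstacle, and the step I would be most careful about, is exactly this infimum computation. The naive route would assert that $[0,1]\setminus\bigcup_{n}X_{n}$ is dense and invoke a Baire-category argument, but that is unavailable in $\ACAo$ and is in fact one of the `Big' principles under study. The fix is to never pass to the union: the small-value witness is extracted separately for each fixed $N$ from nowhere density of $X_{N}$ alone, so only a single closed nowhere dense set is consulted at a time. A secondary point to handle cleanly is the meaning of the equation $\osc_{h}=h$: since $\osc_{h}$ need not exist as a third-order function in $\ACAo$, I would state the claim comparatively, proving $\osc_{h}(x)>y\leftrightarrow h(x)>y$ via the inequalities above, in line with the convention following Definition \ref{oscfn}.
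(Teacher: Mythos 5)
Your proof is correct. One caveat on comparison: the paper does not actually prove Theorem \ref{propi} in the text---it defers entirely to \cite{samBIG2}*{\S 1.3.4}---so there is no in-paper argument to measure yours against; it must be judged on its own merits, and on those it stands. The superlevel-set identity $\{x\in[0,1]:h(x)\geq \frac{1}{2^{n}}\}=X_{n}$ (which is precisely where increasingness of the sequence enters) correctly reduces upper semi-continuity to closedness of a single $X_{m}$, and your finite downward dichotomy on density delivers cliquishness using only classical case-splitting, which is all that $\ACAo$ requires. Most importantly, your computation of $\inf_{B(x,1/2^{k})}h=0$ consults one nowhere dense set $X_{N}$ at a time rather than the complement of the union $\cup_{n\in\N}X_{n}$: appealing to density of that complement would indeed amount to the Baire category theorem, which is one of the `Big' principles under study and not available in $\ACAo$, so flagging and avoiding this is exactly the right move. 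Your comparative reading of the equation $\osc_{h}=h$ via the convention following Definition \ref{oscfn} is likewise the appropriate way to handle the fact that suprema and infima need not exist as reals in the base theory.
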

As it happens, functions that are their own oscillation function have been studied in the mathematical literature (see e.g.\ \cite{kosten}). 

\subsubsection{The Baire category theorem}\label{BCTsec}
We discuss the RM of the Baire category theorem, denoted $\BCT_{[0,1]}$, which involves basic properties of usco and cliquish functions, as well as intermediate classes (\cites{samBIG2, dagsamVII}). 
\begin{thm}[$\BCT_{[0,1]}$]\label{konkli}
If $ (O_n)_{n \in \N}$ is a sequence of dense open sets of reals, then 
$ \bigcap_{n \in\N } O_n$ is non-empty.
\end{thm}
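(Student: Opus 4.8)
The plan is to adapt the classical nested-interval proof to the present setting, where each $O_n$ is presented by a characteristic function $F$ (Definition \ref{charz}) rather than an RM-code. I would construct a decreasing sequence of closed rational intervals $[a_n,b_n]$ with $b_n-a_n<1/2^{n}$ and $[a_n,b_n]\subset O_n\cap(a_{n-1},b_{n-1})$ for each $n$, starting from some $[a_0,b_0]\subset O_0$. Since $(a_n)_{n\in\N}$ and $(b_n)_{n\in\N}$ are then Cauchy with an explicit modulus and squeeze together, the real number $x:=\lim_n a_n=\lim_n b_n$ exists already in $\RCAo$; as $x\in[a_n,b_n]\subset O_n$ for every $n$, it witnesses $\bigcap_{n\in\N}O_n\neq\emptyset$. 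Starting the construction inside an arbitrary subinterval would give the stronger conclusion that $\bigcap_n O_n$ is dense, at no extra cost.

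The heart of the argument is the per-stage selection. Given $[a_n,b_n]$, density of $O_{n+1}$ provides a point $y\in(a_n,b_n)\cap O_{n+1}$, and openness of $O_{n+1}$ provides $k\in\N$ with $B(y,1/2^{k})\subset O_{n+1}$; one then picks rationals $a_{n+1}<b_{n+1}$ in $(a_n,b_n)$ with $[a_{n+1},b_{n+1}]\subset B(y,1/2^{k})$ and $b_{n+1}-a_{n+1}<1/2^{n+1}$. To assemble these choices into a single sequence $([a_n,b_n])_{n\in\N}$ one would like to apply a recursion or a choice principle such as $\QFAC^{0,1}$.

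Here lies the main obstacle, and the reason $\BCT_{[0,1]}$ is a genuinely `Big' principle rather than a theorem of a weak base theory. The condition $[a_{n+1},b_{n+1}]\subset O_{n+1}$ needed to drive both the recursion and the final membership $x\in O_{n+1}$ unfolds as $(\forall x\in[a_{n+1},b_{n+1}])(F(x)=1)$, a universal quantifier over the reals applied to the third-order characteristic function $F$ of $O_{n+1}$. This formula is neither quantifier-free nor arithmetical in its second-order parameters, so $\QFAC^{0,1}$ does not apply to it, and Kleene's $(\exists^{2})$—which only yields comprehension for formulas with number and function quantifiers—cannot decide it either. In second-order RM this difficulty evaporates, because an RM-coded open set $O=\bigcup_n(c_n,d_n)$ turns $[p,q]\subset O$ into the $\Sigma^{0}_{1}$ statement that $[p,q]$ is covered by finitely many $(c_n,d_n)$, making the selector unproblematic. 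For characteristic-function open sets the passage to such a code is exactly the principle $\open$, which by Theorem \ref{temp3} implies $\NIN_{[0,1]}$ and already fails in $\Z_2^{\omega}$; thus the containment test, and with it the whole nested-interval construction, cannot be performed in the weak systems where the classical proof lives, and $\BCT_{[0,1]}$ must instead be studied through its equivalences, e.g.\ with properties of the usco, cliquish function $h$ of Template \ref{tempy} via Theorem \ref{propi}.
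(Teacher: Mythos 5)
Your proposal is correct in substance and matches the paper's treatment exactly: despite the theorem environment, the paper never proves Theorem \ref{konkli} at all --- $\BCT_{[0,1]}$ is the statement of one of the four new `Big' principles, unprovable even in $\Z_{2}^{\omega}+\QFAC^{0,1}$ (it implies $\NIN_{[0,1]}$, see Figure \ref{xxx} and Theorem \ref{temp3}), and is studied solely through its equivalences over $\ACAo$ in Theorem \ref{clockn}, using the function $h$ from \eqref{mopi} just as you indicate. Your diagnosis of the obstruction --- that the containment test $[p,q]\subset O_{n+1}$ for a characteristic-function open set is irreducibly third-order, so neither $\QFAC^{0,1}$ nor $(\exists^{2})$ can drive the nested-interval recursion, whereas RM-codes (i.e.\ $\open$) would trivialise it --- is precisely the paper's `second-order-ish' analysis of Section \ref{expla}; the one caveat is that unprovability of $\BCT_{[0,1]}$ itself rests on the implication $\BCT_{[0,1]}\Rightarrow\NIN_{[0,1]}$ (e.g.\ via item \eqref{bctii} of Theorem \ref{clockn} applied to $f(x):=\frac{1}{2^{Y(x)+5}}$), not merely on the failure of this particular proof strategy.
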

The Baire category theorem for the real line was first proved by Osgood (\cite{fosgood}) and later by Baire (\cite{beren2}) in a more general setting.
\begin{thm}[$\ACAo$]\label{clockn} The following are equivalent to $\BCT_{[0,1]}$. 
\begin{enumerate}
 \renewcommand{\theenumi}{\alph{enumi}}
\item For usco $f:[0,1]\di \R$, there is a point $x\in [0,1]$ where $f$ is continuous.\label{bctii}
\item For usco and countably continuous $f:[0,1]\di \R$, there is a point $x\in [0,1]$ where $f$ is continuous.\label{bctv}
\item Any usco $f:[0,1]\di \R$ is bounded below on some interval, i.e.\ there exist $q\in \Q$ and $c, d\in [0,1]$ such that $(\forall y\in (c, d))(f(y)\geq q)$.\label{bct6}
\item For usco $f:[0,1]\di \R^{+}$, there are $c, d\in [0,1]$ with $0<\inf_{x\in [c, d]}f(x)$.\label{bct7}
\item For Baire $1^{*}$ $f:[0,1]\di [0,1]$, there is $x\in [0,1]$ where $f$ is continuous.\label{b12}
\item \emph{(Uniform boundedness principle \cite{bro})} A sequence of lsco functions that is pointwise bounded on $[0,1]$, is uniformly bounded on some $(c, d)\subset [0,1]$.\label{franken}
\item For cliquish $f:[0,1]\di \R$ which has an oscillation function $\osc_{f}:[0,1]\di \R$, there is a point $x\in [0,1]$ where $f$ is continuous. \label{la2}
\item For cliquish $f:[0,1]\di \R$ which has an oscillation function $\osc_{f}:[0,1]\di \R$, there is a point $x\in [0,1]$ where $f$ is lqco. \label{la3}
\item For cliquish and uqco $f:[0,1]\di \R$ which has an oscillation function $\osc_{f}:[0,1]\di \R$, there is a point $x\in [0,1]$ where $f$ is continuous. \label{la4}
\item For totally discontinuous $f:[0,1]\di \R$ with oscillation function $\osc_{f}:[0, 1]\di \R$, there is $N\in \N$ and $c, d\in  [0,1]$ with $\osc_{f}(x)\geq \frac{1}{2^{N}}$ for $x\in (c, d)$.\label{la5}
\item \emph{Volterra's theorem}: there do not exist two \emph{pointwise discontinuous} $f, g:[0,1]\di \R$ with associated oscillation functions for which the continuity points of one are the discontinuity points of the other, and vice versa.\label{volare1337}
\item \emph{Volterra's theorem} for `pointwise discont.' replaced by `cliquish' or `usco'.\label{volare23}
\item \emph{Volterra's corollary}: there is no function with an oscillation function that is continuous on $\Q\cap[0,1]$ and discontinuous on $[0,1]\setminus\Q$.\label{volare4}
\item \emph{Volterra's corollary} restricted to usco functions.\label{volare3}
\item Blumberg's theorem \(\cite{bloemeken}\) restricted to usco \(or cliquish with an oscillation function\) functions on $[0,1]$.\label{volare6}
\item For usco $f:[0,1]\di \R$, the set $ B_{f}$ from \eqref{BF} is non-empty.\label{bao1}
\item For cliquish $f:[0,1]\di \R$ with an oscillation function, $ B_{f}$ is non-empty.\label{bao2}
\end{enumerate}
\end{thm}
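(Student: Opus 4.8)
The plan is to treat $\BCT_{[0,1]}$ as a hub: I would prove that it implies each of the seventeen items, and separately that one minimal item reverses back to it, then route the remaining items through that reversal. The unifying device is the observation that for any $f:[0,1]\di\R$ possessing an oscillation function $\osc_{f}$ as in Definition~\ref{oscfn}, the continuity set is $C_{f}=\bigcap_{n\in\N}G^{f}_{n}$ with $G^{f}_{n}:=\{x:\osc_{f}(x)<\tfrac{1}{2^{n}}\}$ open, and dually $D_{f}=\bigcup_{n\in\N}F^{f}_{n}$ with $F^{f}_{n}:=\{x:\osc_{f}(x)\geq\tfrac{1}{2^{n}}\}$ closed; over $\ACAo$ these exist as characteristic functions via $(\exists^{2})$ applied to the (comparative) formula defining $\osc_{f}$. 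The first lemma I would isolate is that usco functions, and more generally cliquish functions equipped with an oscillation function, have each $F^{f}_{n}$ \emph{nowhere dense} — this is essentially the definition of cliquishness — so that $D_{f}$ is meagre and each $G^{f}_{n}$ is dense open.

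Granting that lemma, the continuity-point items (a), (b), (e), (g), (h), (i) follow by applying $\BCT_{[0,1]}$ to the dense open family $(G^{f}_{n})_{n\in\N}$, whose intersection $C_{f}$ is then nonempty, indeed dense. The boundedness items (c), (d), (f) I would instead derive from the equivalent closed-covering form of $\BCT_{[0,1]}$: for usco $f$ the closed sets $\{x:f(x)\geq -n\}$, and for an lsco sequence the closed sets $\bigcap_{m}\{x:f_{m}(x)\leq n\}$, cover $[0,1]$, so one has nonempty interior, yielding a bound on an interval. The Volterra items (k)--(n) and Blumberg (o) are the classical category arguments — two dense $G_{\delta}$ continuity sets must meet, and $\Q\cap[0,1]$ cannot be a $C_{f}$ since it is not $G_{\delta}$ — where the assumed oscillation function is exactly what makes ``$C_{f}$ is $G_{\delta}$'' available so weakly. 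For the Bernstein items (p), (q) I would relate the set $B_{f}$ from \eqref{BF} to $C_{f}$, using that the Bernstein polynomials converge to $f$ at continuity points, so that nonemptiness of $C_{f}$ (from $\BCT_{[0,1]}$) forces $B_{f}\ne\emptyset$.

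For the reversal, given dense open sets $(O_{n})_{n\in\N}$, I would set $X_{n}:=[0,1]\setminus(O_{0}\cap\cdots\cap O_{n})$, an increasing sequence of closed nowhere dense sets, and form $h$ as in \eqref{mopi}. By Theorem~\ref{propi}, $h$ is usco and cliquish and equal to its own oscillation function, and one checks that $h$ is continuous at $x$ precisely when $x\notin\bigcup_{n}X_{n}$, i.e.\ when $x\in\bigcap_{n}O_{n}$. Hence item (a) applied to $h$ produces a point of $\bigcap_{n}O_{n}$, giving $\BCT_{[0,1]}$. The same $h$ reverses the others by contradiction: if $\bigcap_{n}O_{n}=\emptyset$ then $h$ is totally discontinuous with $\osc_{h}=h$, so item (j) would supply an interval $(c,d)$ with $h\geq\tfrac{1}{2^{N}}$ throughout, forcing $(c,d)\subseteq X_{N}$ against nowhere-density; the Volterra and oscillation items fall the same way, and each remaining item can be seen to imply one of these reversing statements, closing all loops.

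The hard part will not be any single implication but carrying the oscillation machinery \emph{uniformly} inside $\ACAo$: one must justify that $C_{f}$, $D_{f}$ and the $F^{f}_{n}$ genuinely exist using only the comparative meaning of suprema, infima and limits flagged after Definition~\ref{oscfn}, and that cliquishness delivers nowhere-density of $F^{f}_{n}$ in this weak setting. I expect the Bernstein items (p), (q) to be the most delicate, since the identification of $B_{f}$ with (a dense subset of) $C_{f}$ is the least transparent and requires the convergence theory of Bernstein polynomials for discontinuous $f$; the standing hypothesis that an oscillation function is \emph{given} is precisely what keeps every relevant formula second-order and hence within reach of $\BCT_{[0,1]}$.
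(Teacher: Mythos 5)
Your architecture for the items that \emph{assume} an oscillation function, and your reversal, coincide with the paper's: the paper's displayed proof establishes exactly item (g) by writing $D_{f}=\bigcup_{k}D_{k}$ with $D_{k}=\{x:\osc_{f}(x)\geq\frac{1}{2^{k}}\}$ closed and nowhere dense (by cliquishness), and it reverses via the function $h$ from \eqref{mopi} together with Theorem \ref{propi}, just as you do (your choice $X_{n}:=[0,1]\setminus(O_{0}\cap\dots\cap O_{n})$ even matches the ``increasing'' hypothesis of Theorem \ref{propi} more literally than the paper's own $X_{n}:=[0,1]\setminus O_{n}$). Your covering-form treatment of the boundedness items (c), (d), (f) is also sound, because the sets $\{x:f(x)\geq q\}$ and $\bigcap_{m}\{x:f_{m}(x)\leq n\}$ are defined by \emph{arithmetical} formulas in the reals $f(x)$, $f_{m}(x)$, hence exist via $(\exists^{2})$.

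The genuine gap is your uniform claim that for \emph{usco} $f$ (and likewise for the Baire $1^{*}$ and countably continuous items) the sets $G^{f}_{n}$, $F^{f}_{n}$ ``exist as characteristic functions via $(\exists^{2})$ applied to the (comparative) formula defining $\osc_{f}$''. When no oscillation function is given as data, the comparative formula for $\osc_{f}(x)\geq\frac{1}{2^{n}}$ quantifies over real points of $[0,1]$ (witnesses to a large supremum-minus-infimum over shrinking balls); this is a third-order formula, not an arithmetical one, and $(\exists^{2})$ decides only arithmetical formulas. Forming such sets in general requires $(\exists^{3})$-strength and is not available even in $\Z_{2}^{\omega}$ --- this is exactly the phenomenon behind Theorem \ref{temp3} (suprema of usco functions imply $\NIN_{[0,1]}$), and it is exactly the distinction the theorem encodes by assuming ``has an oscillation function $\osc_{f}$'' in items (g)--(o), (q) but not in the usco items. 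The paper flags this explicitly: the equivalence for item (a) ``is much harder as no oscillation function is assumed'', and the cited proof in \cite{samBIG2}*{\S2.2} proceeds differently, by checking that for usco functions \emph{only} all objects of a textbook proof are arithmetically definable. Concretely: the sub-level sets $E_{q}=\{x:f(x)<q\}$ are \emph{open} for usco $f$, hence $\overline{E_{q}}=\overline{E_{q}\cap\Q}$, so the closed nowhere dense sets $\overline{E_{q}\cap\Q}\setminus E_{q}$, whose union over $q\in\Q$ contains $D_{f}$, are definable from $f{\upharpoonright}\Q$ via $(\exists^{2})$; $\BCT_{[0,1]}$ is then applied to their complements. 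No such usco-specific argument appears in your proposal, and your nowhere-density lemma is likewise not ``essentially the definition'' in the usco case: that usco functions are cliquish is a substantive lemma in $\ACAo$, not a definition. A secondary, smaller gap: the Volterra reversals do not ``fall the same way'' as item (j); from $\neg\BCT_{[0,1]}$ the single function $h$ is \emph{totally} discontinuous, whereas items (k)--(n) require producing from $\neg\BCT_{[0,1]}$ \emph{pointwise discontinuous} functions with prescribed, complementary continuity sets (resp.\ a function continuous exactly on $\Q\cap[0,1]$), which needs an additional construction.
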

\begin{proof}
We establish one `textbook' equivalence. 
The usual proof establishes item~\eqref{la2} using $\BCT_{[0,1]}$: let $f:[0,1]\di \R$ be cliquish with oscillation function $\osc_{f}:[0,1]\di \R$.  
Then $D_{f}=\{x\in [0,1]: \osc_{f}(x)>0 \}$ and $D_{f}=\cup_{k\in \N}D_{k}$ where $D_{k}=\{x\in [0,1]:\osc_{f}(x)\geq \frac{1}{2^{k}}\} $ and the latter is closed and nowhere dense. 
By $\BCT_{[0,1]}$, we have $[0,1]\ne \cup_{n\in \N}D_{k}=D_{f}$, i.e.\ $C_{f}\ne \emptyset$ as required for item \eqref{la2}.

\smallskip

For the reversal, assume item \eqref{la2}, let $(O_{n})_{n\in \N}$ be a sequence of open and dense sets, and consider $h: [0,1]\di \R$ from \eqref{mopi} where $X_{n}:= [0,1]\setminus O_{n}$.  
By Theorem~\ref{propi}, we may apply item \eqref{la2} to $h$, i.e.\ there is $x_{0}\in C_{h}$.  By definition, we have $h(x_{0})=0$ and hence $x_{0}\in \cap_{n\in \N}O_{n}$, as required for $\BCT_{[0,1]}$
\end{proof}
The equivalence for e.g.\ item \eqref{bctii} is much harder as no oscillation function is assumed. 
The approach in \cite{samBIG2}*{\S2.2} is to take a fairly `textbook proof' and observe that (for usco functions only) all essential objects in the proof have an arithmetical definition, i.e.\ the proof goes through over $\ACAo+\BCT_{[0,1]}$. 
The following items are equivalent to $\BCT_{[0,1]}$ assuming extra induction as in $\IND_{\R}$ right below.
\begin{itemize}
\item For fragmented $f:[0,1]\di \R$ which has an oscillation function $\osc_{f}:[0,1]\di \R$, there is a point $x\in [0,1]$ where $f$ is continuous. \label{fra2}
\item For $B$-measurable of class 1 and cliquish $f:[0,1]\di \R$ which has an oscillation function $\osc_{f}:[0,1]\di \R$, there is $x\in [0,1]$ where $f$ is continuous.\label{fra3}
\end{itemize}
\begin{princ}[$\IND_{\R}$]
For $F:(\R\times \N)\di \N, k\in \N$, there is $X\subset \N$ such that
\[
(\forall n\leq k)\big[ (\exists x\in \R)(F(x, n)=0)\asa n\in X\big].
\]
\end{princ}
We believe many variations of the above are possible as there are many function classes between the usco and cliquish functions. 

\subsubsection{Tao's pigeon hole principle}\label{Taosec}
We discuss the RM of the Tao's pigeon hole principle for the Lebesgue measure, denoted $\PHP_{[0,1]}$.  This involves basic properties of the Riemann integral and restrictions to certain function classes (\cites{samBIG2, samBIG4}). 
\begin{princ}[$\PHP_{[0,1]}$]\label{PHP}
If $ (X_n)_{n \in \N}$ is an increasing sequence of measure zero closed sets of reals in $[0,1]$, then $ \bigcup_{n \in\N } X_n$ has measure zero.
\end{princ} 
The following theorem \emph{almost} establishes the essential part of the \emph{Vitali-Lebesgue theorem}, i.e.\ that a Riemann integrable function is continuous ae. 
\begin{thm}[$\ACAo+\IND_{\R}$]\label{sepa}
For Riemann integrable $f:[0,1]\di \R$ with oscillation function $\osc_{f}$, $D_{k}:=\{x\in [0,1]:\osc_{f}(x)\geq \frac{1}{2^{k}}\} $ has measure zero.
\end{thm}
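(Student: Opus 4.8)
The plan is to run the classical Riemann (Du Bois-Reymond) oscillation criterion for integrability, adapted to the present higher-order setting. Fix $k\in\N$ and $\eps>0$. Since a Riemann integrable $f$ is bounded, its oscillation sums make sense, and integrability in the Riemann-sum (Cauchy) form yields a partition $0=x_{0}<x_{1}<\dots<x_{n}=1$ whose associated oscillation sum $\sum_{i<n}\osc_{f}([x_{i},x_{i+1}])\cdot(x_{i+1}-x_{i})$ is smaller than $\eps/2^{k+1}$, where this sum is read in the virtual/comparative sense of Definition \ref{oscfn}. The subintervals $(x_{i},x_{i+1})$ that meet $D_{k}$ must then have small total length, and the finitely many partition points are covered by arbitrarily short intervals; together these cover $D_{k}$ by open intervals of total length below $\eps$, which is exactly the definition of measure zero once we pad the finite cover to an infinite sequence by appending degenerate (length $0$) intervals. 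As $\eps$ is arbitrary, $D_{k}$ is measure zero.

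The formalisation in $\ACAo+\IND_{\R}$ proceeds as follows. Because $\osc_{f}$ is given as a genuine third-order function, the predicate $\osc_{f}(x)\geq \frac{1}{2^{k}}$ is arithmetical in $\osc_{f}$, so $(\exists^{2})$ already provides $D_{k}$ as a set (indeed a closed set, by upper semicontinuity of $\osc_{f}$). The crucial use of $\IND_{\R}$ is in forming the set $S$ of those indices $i<n$ for which the open subinterval $(x_{i},x_{i+1})$ contains a point of $D_{k}$: this is the condition $(\exists x\in\R)(x_{i}<x<x_{i+1}\wedge x\in D_{k})$, a quantifier over the reals, ranging over the finite index set $\{0,\dots,n-1\}$. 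This is precisely a bounded instance of $\IND_{\R}$, with $F(x,i)=0$ coding the displayed condition, and it delivers $S$ as an honest finite subset of $\N$. For each $i\in S$ one checks, from $\osc_{f}(x)=\lim_{m}\osc_{f}(B(x,1/2^{m}))$ and the fact that any small ball around an interior point is contained in $[x_{i},x_{i+1}]$, that $\osc_{f}([x_{i},x_{i+1}])\geq \frac{1}{2^{k}}$ in the comparative sense.

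The length estimate then reads
\[
\tfrac{1}{2^{k}}\sum_{i\in S}(x_{i+1}-x_{i})\leq \sum_{i\in S}\osc_{f}([x_{i},x_{i+1}])(x_{i+1}-x_{i})\leq \sum_{i<n}\osc_{f}([x_{i},x_{i+1}])(x_{i+1}-x_{i})< \tfrac{\eps}{2^{k+1}},
\]
so $\sum_{i\in S}(x_{i+1}-x_{i})<\eps/2$. Since $D_{k}\subseteq \big(\bigcup_{i\in S}(x_{i},x_{i+1})\big)\cup\{x_{0},\dots,x_{n}\}$ and the $n+1$ partition points are covered by intervals of total length $<\eps/2$, we obtain the desired cover of total length $<\eps$.

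I expect the main obstacle to be the middle inequality above, i.e.\ turning the Cauchy/Riemann-sum form of integrability into a genuine numerical bound on the oscillation sum while the interval oscillations $\osc_{f}([x_{i},x_{i+1}])$ exist only virtually, and doing so without any choice principle to select oscillation-realising tags on the bad subintervals. The key point is that all the relevant data is indexed by the finite partition: there are only $n$ subintervals, so the passage from ``each bad interval has comparative oscillation at least $1/2^{k}$'' to ``the genuine real $\frac{1}{2^{k}}\sum_{i\in S}(x_{i+1}-x_{i})$ is bounded by the integrability tolerance'' is a finitary manipulation, and it is exactly this bounded, real-quantifier bookkeeping that $\IND_{\R}$ is designed to support. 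Finally, I would stress that the theorem treats each $D_{k}$ separately; deriving that $D_{f}=\bigcup_{k}D_{k}$ is measure zero, i.e.\ the full Vitali-Lebesgue statement, would additionally require a countable union of measure-zero closed sets, which is precisely the role of $\PHP_{[0,1]}$.
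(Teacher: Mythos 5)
Your proposal is correct and is essentially the paper's own argument: the paper's proof is the one-line remark that ``the standard proof-by-contradiction formalises'' (deferring details to the cited reference), and your direct covering argument is exactly that standard oscillation-criterion proof, just stated in contrapositive form rather than by contradiction. In particular, you correctly isolate the two genuine formalisation issues --- forming the finite set $S$ of subintervals whose interior meets $D_{k}$, and the finite selection of oscillation-realising tags on those subintervals while the interval oscillations $\osc_{f}([x_{i},x_{i+1}])$ exist only in the comparative sense --- as precisely the places where $\IND_{\R}$ (together with $(\exists^{2})$, e.g.\ via coding finite sequences of reals and quantifier-free induction on the resulting subset of $\N$) is needed, which matches the role this principle plays in the paper.
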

\begin{proof}
The standard proof-by-contradiction formalises (see \cite{samBIG2}*{Theorem 3.3.}).
\end{proof}
\noindent
We can now connect $\PHP_{\R}$ and the Vitali-Lebesgue theorem.  
\begin{thm}[$\ACAo+\IND_{\R}+\QFAC^{0,1}$]\label{duck555}
The following are equivalent.  
\begin{enumerate}
\renewcommand{\theenumi}{\alph{enumi}}
\item The pigeonhole principle for measure spaces as in $\PHP_{[0,1]}$.
\item \(Vitali-Lebesgue\) For Riemann integrable $f:[0,1]\di \R$ with an oscillation function, the set $D_{f}$ has measure $0$.\label{tao1}
\item For Riemann integrable usco $f:[0,1]\di \R$, the set $D_{f}$ has measure $0$.\label{tao2}
\item For Riemann integrable $f:[0,1]\di [0,1]$ with an oscillation function and $\int_{0}^{1}f(x)dx=0$, the set $\{x\in [0,1]:f(x)=0\}$ has measure one.\label{tao3}
%there is $x\in [0,1]$ with $f(x)=0$. \(Bourbaki, \cite{boereng}*{p.\ 61}\).\label{pono2}
\item For Riemann integrable usco $f:[0,1]\di [0,1]$ with $\int_{0}^{1}f(x)dx=0$, the set $\{x\in [0,1]:f(x)=0\}$ has measure one.\label{tao4}
\item \textsf{\textup{(FTC)}} For Riemann integrable $f:[0,1]\di \R$ with an oscillation function and $F(x):=\lambda x.\int_{0}^{x}f(t)dt$, the following set exists: 
\be\label{tanko}
\{x\in [0,1]:F \textup{ is differentiable at $x$ with derivative } f(x)\}
\ee
and has measure one.\label{tao5}
% there is $x_{0}\in (0,1)$ where $F(x)$ is differentiable with derivative $f(x_{0})$.\label{ponofar}
\item \textsf{\textup{(FTC)}} The previous item for usco functions.\label{tao6}
\item For $f:[0,1]\di \R$ not continuous almost everywhere with oscillation function $\osc_{f}:[0, 1]\di \R$, there is $N\in \N$ and $E\subset  [0,1]$ of positive measure such that $\osc_{f}(x)\geq \frac{1}{2^{N}}$ for $x\in E$.\label{topanga2}
\end{enumerate}
We can replace `usco' by `cliquish with an oscillation function' in the above.  
\end{thm}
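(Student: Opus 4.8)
The plan is to run a hub-and-spoke argument with $\PHP_{[0,1]}$ as the hub, proving that item~(a) implies each of items~(b)--(h), and that each of (b)--(h) implies (a) through a single well-chosen function. For the forward implications, I would fix a function $f$ meeting the relevant hypotheses together with an oscillation function $\osc_{f}$ (in the usco case this function is arithmetically definable, so $\ACAo$ produces it). Writing $D_{k}:=\{x\in[0,1]:\osc_{f}(x)\geq\frac1{2^{k}}\}$, I would first note that each $D_{k}$ is closed --- this is the upper semicontinuity of $\osc_{f}$, understood through the comparative meaning of `$\osc_{f}(x)\geq\frac1{2^{k}}$' discussed in Section~\ref{Taosec} --- and that $(D_{k})_{k\in\N}$ is increasing with $D_{f}=\bigcup_{k}D_{k}$. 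For the Riemann integrable items, Theorem~\ref{sepa} shows each $D_{k}$ has measure zero, so $\PHP_{[0,1]}$ immediately yields that $D_{f}$ has measure zero, which is item~(b), and item~(c) in the usco case.

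The remaining forward implications are then routine real analysis over $\ACAo+\IND_{\R}$. Items~(d),(e) follow from (b),(c): once $f:[0,1]\di[0,1]$ is known to be continuous almost everywhere and $\int_{0}^{1}f=0$ with $f\geq 0$, any continuity point with $f(x)>0$ would contribute positive integral, so $\{x:f(x)>0\}$ has measure zero and $\{x:f(x)=0\}$ has measure one. The FTC items~(f),(g) follow similarly, since the set in~\eqref{tanko} differs from the continuity set of $f$ by a set of measure zero, and the continuity set has measure one by (b). Finally, item~(h) is exactly the contrapositive of `$\PHP_{[0,1]}$ applied to the $D_{k}$': if $f$ is not continuous almost everywhere then $D_{f}=\bigcup_{k}D_{k}$ has positive measure, so by $\PHP_{[0,1]}$ not all $D_{k}$ can have measure zero, and any $D_{N}$ of positive measure serves as the required set $E$.

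For the reversals I would invoke the template of Theorem~\ref{tempy}: given an increasing sequence $(X_{n})_{n\in\N}$ of measure-zero closed sets, form $h:[0,1]\di\R$ as in~\eqref{mopi}. Since a closed measure-zero set has empty interior and is therefore nowhere dense, Theorem~\ref{propi} applies and shows that $h$ is usco, cliquish, and its own oscillation function, with $\{x:\osc_{h}(x)\geq\frac1{2^{k}}\}=X_{k}$ and $D_{h}=\bigcup_{n}X_{n}$. Crucially, $h$ simultaneously satisfies the hypotheses of \emph{every} item: it takes values in $[0,1]$, it is usco, and a direct Darboux-sum estimate --- covering the compact measure-zero set $X_{k}$ by finitely many intervals of small total length via $\QFAC^{0,1}$ and bounding $h$ by $\frac1{2^{k}}$ off $X_{k}$ --- shows $h$ is Riemann integrable with $\int_{0}^{1}h=0$, whence $F(x)=\int_{0}^{x}h(t)\,dt\equiv 0$. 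Applying any single item to $h$ then forces $D_{h}=\bigcup_{n}X_{n}$ to have measure zero: directly for (b),(c); via $\{x:h(x)=0\}$ having measure one for (d),(e); via $F'=0=h$ off $\bigcup_{n}X_{n}$ for (f),(g); and contrapositively for (h). This establishes $\PHP_{[0,1]}$ and closes every equivalence at once. The same $h$ being cliquish with an oscillation function yields the stated `usco $\to$ cliquish' replacement.

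The main obstacle, I expect, is not any individual implication but the bookkeeping of measure-theoretic facts in the weak base theory: one must verify that `increasing union of measure-zero sets', `measure-one complement', and the finite-cover extraction for the compact sets $X_{k}$ all go through with only $\IND_{\R}$ available for induction and $\QFAC^{0,1}$ for the uniform choice of covers. This is precisely why the base theory carries these two extra axioms, and it is also the delicate point in showing that $h$ is Riemann integrable with vanishing integral \emph{without} circularly assuming the conclusion of $\PHP_{[0,1]}$.
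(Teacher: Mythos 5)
Your proposal follows essentially the same route as the paper's proof: the forward directions come from applying $\PHP_{[0,1]}$ to the increasing sequence of measure-zero closed sets $D_{k}$ supplied by Theorem \ref{sepa}, and the reversals are all closed by applying the item in question to the function $h$ from \eqref{mopi}, whose usco/cliquish/oscillation properties are given by Theorem \ref{propi} and whose Riemann integrability with $\int_{0}^{1}h(x)dx=0$ is verified directly. The paper is simply terser, writing out only the reversal from item (c) and leaving the remaining reversals and the delicate measure-theoretic bookkeeping to the same template, so your write-up is a correct elaboration of the paper's argument.
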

\begin{proof}
Most items follow from $\PHP_{[0,1]}$ by Theorem \ref{sepa}.
To derive $\PHP_{[0,1]}$ from item \eqref{tao2}, let $(X_{n})_{n\in \N}$ be a sequence of measure zero sets and note that $h:[0,1]\di \R$ from \eqref{mopi} is usco by Theorem \ref{propi}. 
One readily verifies that $h$ is Riemann integrable with $\int_{0}^{1}h(x)dx=0$.  By item \eqref{tao2}, $D_{f}=\cup_{n\in \N}X_{n}$ has measure $0$. % and we are done. 
\end{proof}
%On a speculative note, $\AC$ is needed to justify our intuition of the Lebesgue integral as the area under the graph of a function .  
%Theorem \ref{duck5} establishes the parallel observation that the same justification for the \emph{Riemann} integral cannot be proved in $\Z_{2}^{\omega}+\QFAC^{0,1}$ (but $\Z_{2}^{\Omega}$ suffices as usual).  
While $\enum$ is explosive by Theorem \ref{sedr}, $\PHP_{[0,1]}$ seems rather tame, as follows.
\begin{thm}\label{weng} 
The system $\ACAo+\PHP_{[0,1]}$ is $\Pi_{2}^{1}$-conservative over $\ACA_{0}$.
\end{thm}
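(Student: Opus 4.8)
The plan is to prove the conservativity via the \emph{ECF}-interpretation (the model of hereditarily continuous functionals, in which every functional of type two or higher is coded by a type-one associate). The guiding observation is that a $\Pi_{2}^{1}$-sentence $\sigma$ lives entirely in the language of second-order arithmetic: all its quantifiers range over numbers and over type-one objects, so the $\ECF$-translation leaves it untouched, $[\sigma]_{\ECF}\equiv\sigma$. Thus if I can push an entire derivation down to $\ACA_{0}$ through $\ECF$, the $\Pi_{2}^{1}$-sentences survive verbatim, and conservativity follows.

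First I would recall the soundness of the interpretation: since $\ACAo=\RCAo+(\exists^{2})$ and the $\ECF$-translation of $(\exists^{2})$ is provable in $\ACA_{0}$ (Kohlenbach), one has the meta-implication that if $\ACAo\vdash A$ then $\ACA_{0}\vdash [A]_{\ECF}$, and more generally $\ACA_{0}\vdash [A]_{\ECF}\to [B]_{\ECF}$ whenever $\ACAo\vdash A\to B$. The second, and genuinely new, ingredient is to verify $\ACA_{0}\vdash [\PHP_{[0,1]}]_{\ECF}$. Here the point is that under $\ECF$ a third-order characteristic function $F:\R\di\{0,1\}$ is replaced by a \emph{continuous} functional: the would-be closed sets $X_{n}$ collapse to objects whose membership is decided continuously, so that being \emph{measure zero} already forces them to be (coded as) trivial. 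Consequently $[\PHP_{[0,1]}]_{\ECF}$ reduces to the harmless statement that a countable increasing union of such degenerate measure-zero sets is measure zero, which is provable already in $\ACA_{0}$; indeed, the coded continuous regime is exactly what the second-order literature handles. Chaining the two ingredients, from $\ACAo+\PHP_{[0,1]}\vdash\sigma$ I obtain $\ACA_{0}\vdash [\PHP_{[0,1]}]_{\ECF}\to\sigma$, and combined with $\ACA_{0}\vdash[\PHP_{[0,1]}]_{\ECF}$ this yields $\ACA_{0}\vdash\sigma$, as desired.

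I expect the main obstacle to be the careful bookkeeping in the clause $\ACA_{0}\vdash[\PHP_{[0,1]}]_{\ECF}$: one must fix the precise coding of \emph{sequence of closed sets}, \emph{measure zero}, and \emph{union} under the interpretation, check that the measure-theoretic definitions translate to their standard second-order counterparts, and confirm that whatever induction is implicit remains available in $\ACA_{0}$. A tempting alternative---building a higher-order model directly over an arbitrary $M\models\ACA_{0}$ while keeping the second-order part fixed, so that the $\Sigma_{2}^{1}$-witness for $\neg\sigma$ is preserved---runs into exactly the difficulty that $\ECF$ avoids: applying $(\exists^{2})$-comprehension relative to a genuinely discontinuous third-order parameter (such as the function $h$ from \eqref{mopi}) can define type-one sets lying outside $M$'s second-order universe, so that universe threatens to grow and the witness may be destroyed. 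The virtue of the $\ECF$-route is that it trivialises precisely these discontinuous data, which is also the structural reason $\PHP_{[0,1]}$ is \emph{tame}, in contrast with $\enum$, whose explosive strength in Theorem \ref{sedr} rules out any such trivialisation.
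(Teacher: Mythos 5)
Your argument breaks at its first ingredient: the $\ECF$-translation of $(\exists^{2})$ is \emph{not} provable in $\ACA_{0}$ --- it is provably false already in $\RCA_{0}$. Under $\ECF$ every type-two object is replaced by a continuous functional coded by an associate, whereas $(\exists^{2})$ asserts precisely the existence of a discontinuous functional (recall Remark \ref{LEM}: over $\RCAo$, $(\exists^{2})$ is equivalent to the existence of a discontinuous $\R\di\R$-function). In $\RCA_{0}$ one refutes $[(\exists^{2})]_{\ECF}$ directly: an associate for $E$ with $E(1,1,1,\dots)=1$ would, by continuity, force $E(f)=1$ for some $f$ extending a finite string of ones by zeros, contradicting $(\exists n)(f(n)=0)$. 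So the very collapse you invoke to trivialise $[\PHP_{[0,1]}]_{\ECF}$ also destroys the axiom $(\exists^{2})$ that defines $\ACAo$. Kohlenbach's $\ECF$-soundness yields conservation of $\RCAo$ over $\RCA_{0}$, not of $\ACAo$ over $\ACA_{0}$; applied to your situation it gives only $\RCA_{0}\vdash\big([(\exists^{2})]_{\ECF}\wedge[\PHP_{[0,1]}]_{\ECF}\big)\to\sigma$, which is vacuous since $\RCA_{0}$ proves the negation of the antecedent. Indeed, if your soundness claim ``$\ACAo\vdash A$ implies $\ACA_{0}\vdash[A]_{\ECF}$'' were correct, taking $A=(\exists^{2})$ would make $\ACA_{0}$ inconsistent. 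The conservation of $\ACAo$ over $\ACA_{0}$ (Hunter, \cite{hunterphd}) is proved by a model construction, not by $\ECF$.

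The paper's own proof is a one-line appeal to the conservation result of \cite{kruisje}, which is established by exactly the model-theoretic route you dismiss in your final paragraph: one expands a model of $\ACA_{0}$ by suitable third-order objects (in essence, a measure functional validating $\PHP_{[0,1]}$) while keeping the second-order part under control, so that $\Pi_{2}^{1}$-sentences are preserved. The obstacle you raise against that route --- applications of $(\exists^{2})$ to new discontinuous parameters such as $h$ from \eqref{mopi} could generate second-order objects outside the original model --- is real, but it is precisely what the construction in the cited work is designed to handle; it is not avoidable by $\ECF$, which cannot even accommodate $(\exists^{2})$. Your structural intuition (that $\PHP_{[0,1]}$ is tame because it survives in a world with no relevant discontinuous content, in contrast with the explosive $\enum$ of Theorem \ref{sedr}) is sound, but the correct formal vehicle for it is the model-theoretic argument, not the $\ECF$-interpretation.
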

\begin{proof}
Use the conservation result for $\ACAo$ from \cite{kruisje}.
\end{proof}
\noindent
By Theorem \ref{sedr}, we have that $\ACAo+\PHP_{[0,1]}$ cannot prove $\enum$.
We note that $\PHP_{[0,1]}$ is provable in $\ZF$ and much weaker systems by \cite{samBIG2}*{Theorem 3.6}.

\begin{ack}\rm 
We thank Anil Nerode for his valuable advice, especially the suggestion of studying nsc for the Riemann integral, and discussions related to Baire classes.
We thank Ulrich Kohlenbach for (strongly) nudging us towards Theorem \ref{nudge} as part of the second author's Habilitation thesis (\cite{samhabil}) at TU Darmstadt.
We thank Dave L.\ Renfro for his efforts in providing a most encyclopedic summary of analysis, to be found online.  
Our research was supported by the \emph{Deutsche Forschungsgemeinschaft} via the DFG grant SA3418/1-1 and the \emph{Klaus Tschira Boost Fund} via the grant Projekt KT43 .
We express our gratitude towards these institutions.    
\end{ack}

\appendix

\section{Definitions}
\subsection{Function classes}\label{FCS}
We collect most of the definitions used in the above.
These are taken from the literature and collected here for reference. 
\bdefi\label{flung} 
For $f:[0,1]\di \R$, we have the following definitions:
\begin{itemize}
\item $f$ is \emph{upper semi-continuous} at $x_{0}\in [0,1]$ if $f(x_{0})\geq_{\R}\lim\sup_{x\di x_{0}} f(x)$,
\item $f$ is \emph{lower semi-continuous} at $x_{0}\in [0,1]$ if $f(x_{0})\leq_{\R}\lim\inf_{x\di x_{0}} f(x)$,
\item $f$ is \emph{quasi-continuous} at $x_{0}\in [0, 1]$ if for $ \epsilon > 0$ and an open neighbourhood $U$ of $x_{0}$, 
there is a non-empty open ${ G\subset U}$ with $(\forall x\in G) (|f(x_{0})-f(x)|<\eps)$.
\item $f$ is \emph{cliquish} at $x_{0}\in [0, 1]$ if for $ \epsilon > 0$ and an open neighbourhood $U$ of $x_{0}$, 
there is a non-empty open ${ G\subset U}$ with $(\forall y, z\in G) (|f(y)-f(z)|<\eps)$.
%\item $f$ is \emph{symmetrically continuous} at $x_{0}\in [0, 1]$ if the limit $\lim_{h\di 0} [f(x_{0}+h)-f(x_{0}-h)]$ equals $0$ (\cite{housen}).
\item $f$ is \emph{regulated} if for every $x_{0}$ in the domain, the `left' and `right' limit $f(x_{0}-)=\lim_{x\di x_{0}-}f(x)$ and $f(x_{0}+)=\lim_{x\di x_{0}+}f(x)$ exist.  
\item $f$ has bounded variation if if there is $k_{0}\in \N$ such that $k_{0}\geq \sum_{i=0}^{n-1} |f(x_{i})-f(x_{i+1})|$ for any partition $x_{0}=0 <x_{1}< \dots< x_{n-1}<x_{n}=1  $. 
\item $f$ is \emph{c\`adl\`ag} if it is regulated and $f(x)=f(x+)$ for $x\in [0,1)$.
\item $f$ is a $U_{0}$-function \(\cite{jojo, lorentzg, gofer2, voordedorst}\) if it is regulated and for all $x\in (0,1)$: 
\be\label{zolk}
\min(f(x+), f(x-))\leq f(x)\leq \max(f(x+), f(x-)),
\ee
\item $f$ is \emph{Darboux} if it has the intermediate value property, i.e.\ if $a, b\in [0,1], c\in \R$ are such that $a\leq b$ and either $f(a)\leq c\leq f(b)$ or $f(b)\leq c\leq f(a)$, then there is $d\in [a,b]$ with $f(d)=c$.
%\item $f$ is \emph{Baire 1} if it is the pointwise limit of a sequence of continuous functions. 
\item $f$ is \emph{Baire 0} if it is a continuous function. 
\item $f$ is \emph{Baire $n+1$} if it is the pointwise limit of a sequence of Baire $n$ functions.
\item $f$ is \emph{effectively Baire $n$} $(n\geq 2)$ if there is a sequence $(f_{m_{1}, \dots, m_{n}})_{m_{1}, \dots, m_{n}\in \N}$ of continuous functions such that for all $x\in [0,1]$, we have 
\[\textstyle
f(x)=\lim_{m_{1}\di \infty}\lim_{m_{2}\di \infty}\dots \lim_{m_{n}\di \infty}f_{m_{1},\dots ,m_{n}}(x).
\]
\item $f$ is \emph{Baire 1$^{*}$} if\footnote{The notion of Baire 1$^{*}$ goes back to \cite{ellis} and equivalent definitions may be found in \cite{kerkje}.  
In particular,  Baire 1$^{*}$ is equivalent to the Jayne-Rogers notion of \emph{piecewise continuity} from \cite{JR}.} there is a sequence of closed sets $(C_{n})_{n\in \N}$ such $[0,1]=\cup_{n\in \N}C_{n}$ and $f_{\upharpoonright C_{m}}$ is continuous for all $m\in \N$.
\item $f$ is \emph{fragmented} if for any $\eps>0$ and closed $C\subset [0,1]$, there is non-empty relatively\footnote{For $A\subseteq B\subset \R$, we say that \emph{$A$ is relatively open \(in $B$\)} if for any $a\in A$, there is $N\in \N$ such that $B(x, \frac{1}{2^{N}})\cap B\subset A$.  Note that $B$ is always relatively open in itself.} open $O\subset C$ such that $\textup{\textsf{diam}}(f(O))<\eps$.
\item $f$ is \emph{$B$-measurable of class 1} if for every open $Y\subset \R$, the set $f^{-1}(Y)$ is $\F_{\sigma}$, i.e.\ a union over $\N$ of closed sets. 
\item $f$ is \emph{continuous almost everywhere} if it is continuous at all $x\in [0,1]\setminus E$, where $E$ is a measure zero set.
\item $f$ is \emph{pointwise discontinuous} if for any $x\in [0,1]$ and $\eps>0$, there is $y\in [0,1]$ such that $f$ is continuous at $y$ and $|x-y|<\eps$ \(Hankel, 1870, \cite{hankelwoot}\).  % i.e.\ the continuity points are dense. 
\item $f$ is \emph{almost continuous} \(Husain, see \cites{husain,bloemeken}\) at $x\in [0,1]$ if for any open $G\subset \R$ containing $f(x)$, the set $\overline{f^{-1}(G)}$ is a neighbourhood of $x$,
\item $f$ has the \emph{Young condition} at $x\in [0,1]$ if there are sequences $(x_{n})_{n\in \N}, (y_{n})_{n\in \N}$ on the left and right of $x$ with the latter as limit and $\lim_{n\di \infty}f(x_{n})=f(x)=\lim_{n\di \infty}f(y_{n})$.
%\item $f$ is \emph{quasi-continuous} at $x_{0}\in [0, 1]$ if for $ \epsilon > 0$ and any open neighbourhood $U$ of $x_{0}$, 
%there is non-empty open ${ G\subset U}$ with $(\forall x\in G) (|f(x_{0})-f(x)|<\eps)$,
\item $f$ is \emph{cliquish} at $x_{0}\in [0, 1]$ if for $ \epsilon > 0$ and any open neighbourhood $U$ of $x_{0}$, 
there is a non-empty open ${ G\subset U}$ with $(\forall x, y\in G) (|f(x)-f(y)|<\eps)$,
\item $f$ is \emph{upper \(resp.\ lower\) quasi-continuous} at $x_{0}\in [0, 1]$ if for $ \epsilon > 0$ and any open neighbourhood $U$ of $x_{0}$, there is a non-empty open ${ G\subset U}$ with $(\forall x\in G) (f(x)< f(x_{0})+\eps)$ \(resp.\  $(\forall x\in G) (f(x)> f(x_{0})-\eps)$\),
\item $f$ is \emph{countably continuous}\footnote{The notion of countably discontinuity, under a different name, goes back to Lusin (see \cite{novady}).} if there is a sequence $(E_{n})_{n\in \N}$ such $[0,1]=\cup_{n\in \N}E_{n}$ and $f_{\upharpoonright E_{m}}$ is continuous for all $m\in \N$.
%\item $f$ is \emph{simply continuous} \(sico\) if for any open $G\subset \R$, the set $f^{-1}(G)$ is the union of an open and a nowhere dense set. 
\end{itemize}
\edefi
As to notations, a common abbreviation is `usco' and `lsco' for the first two items, while one often just writes `cadlag', i.e.\ without the accents.  
For the penultimate item, `uqco' (resp.\ lqco) is an abbreviation for upper (resp.\ lower) quasi-continuity. 
Moreover, if a function has a certain weak continuity property at all reals in $[0,1]$ (or its intended domain), we say that the function has that property.  

\smallskip

Regarding the notion of `effectively Baire $n$' in Definition \ref{flung}, the latter is used, using codes for continuous functions, in second-order RM (see \cite{basket, basket2}). 

\begin{bibdiv}
\begin{biblist}
\bibselect{allkeida}
\end{biblist}
\end{bibdiv}

\bye